%
%

\documentclass{article}

\usepackage[a4paper, total={7in, 8in}]{geometry}
\usepackage[utf8]{inputenc}   
\usepackage[T1]{fontenc}      
\usepackage{multicol}
\usepackage{amsmath,amsthm} 
\usepackage{amssymb,mathrsfs} 
\usepackage{amssymb}
\usepackage{amsfonts}
\usepackage[normalem]{ulem}
\usepackage{nicefrac}
\usepackage{graphicx}
\usepackage{enumerate}
\usepackage{graphicx} 
\usepackage{lmodern} 
\usepackage{mathtools}
\usepackage{dsfont}
\usepackage{physics}

\usepackage{framed}
\usepackage{color}


\newcommand\N{\mathbb{N}}
\newcommand\E{\mathbb{E}}

\newcommand\C{\mathbb{C}}
\newcommand\Mm{\mathscr{M}}

\renewcommand\H{\mathscr{H}}

\newcommand\R{\mathbb{R}}

\newcommand\X{\mathcal{X}}

\newcommand\psig{\psi_{\gamma}}

\newcommand\PX{\mathcal{P}(\mathcal{X})}

\newcommand\intX{\int_{\mathcal{X}}}
\newcommand\Binfty{B^{\infty}}

\newcommand\proba{\mathbb{P}}
\newcommand\e{\mathrm{e}}
\newcommand\LFD{\mathcal{L}_{\mathrm{FD}}}
\newcommand\LS{\mathcal{L}_{\mathrm{S}}}
\newcommand\LA{\mathcal{L}_{\mathrm{A}}}
\newcommand\IS{I_{\mathrm{S}}}
\newcommand\IA{I_{\mathrm{A}}}
\newcommand\liminfb{\underline{\lim}}
\newcommand\limsupb{\overline{\lim}}
\newcommand\Lc{\mathcal{L}}
\newcommand\Lham{\mathcal{L}_{\mathrm{ham}}}
\newcommand\muref{\mu}

\newcommand\ind{\mathds{1}}
\newcommand\Cb{C_{\mathrm{b}}}

\newcommand\Cinfc{C_{\mathrm{c}}^{\infty}}
\newcommand\Cinfty{C^{\infty}}

\newcommand\I{\mathcal{I}}

\newcommand\id{\mathrm{Id}}
\newcommand\Y{\mathcal{Y}}
\newcommand\IF{I_{\mathrm{F}}}
\newcommand\IV{I_{\mathrm{V}}}
\newcommand\Ps{\Psi}
\newcommand\kap{\kappa}

\newcommand{\vertiii}[1]{{\left\vert\kern-0.25ex\left\vert\kern-0.25ex\left\vert #1 
    \right\vert\kern-0.25ex\right\vert\kern-0.25ex\right\vert}}

\renewcommand{\leq}{\leqslant}

\renewcommand{\geq}{\geqslant}

\newtheorem{theorem}{Theorem}
\newtheorem{lemma}{Lemma}
\newtheorem{assumption}{Assumption}
\newtheorem{proposition}{Proposition}
\newtheorem{corollary}{Corollary}
\newtheorem{remark}{Remark}

\iffalse    
   \excludecomment{corrige}
\else
  
\fi

\begin{document}

\title{Large deviations of empirical measures of diffusions in weighted topologies}
\author{Grégoire Ferré and Gabriel Stoltz\\
\small Université Paris-Est, CERMICS (ENPC), Inria, F-77455 Marne-la-Vallée, France
}

\date{\today}

\maketitle



\abstract{
 We consider large deviations of empirical measures of diffusion processes.
  In a first part, we present conditions to obtain a large deviations principle (LDP)
  for a precise class of unbounded functions. This provides an analogue
  to the standard Cram\'{e}r condition in the context of diffusion processes, which turns out
  to be related to a spectral gap condition for a Witten--Schrödinger operator.
  Secondly, we study more precisely the properties of the Donsker--Varadhan rate functional
  associated with the LDP.
  We revisit and generalize some standard duality results as well as a more original
  decomposition of the rate functional with respect to the symmetric and antisymmetric parts
  of the dynamics.
  Finally, we apply our results to overdamped and underdamped Langevin dynamics, showing the
  applicability of our framework for degenerate diffusions in unbounded configuration
  spaces.}

\section{Introduction}
\label{sec:intro}

Empirical averages of diffusion processes and their convergence are commonly studied
in statistical mechanics, probability theory and machine learning. In statistical physics,
an observable averaged along the trajectory of a diffusion typically converges
to the expectation with respect to its stationary distribution, which provides some macroscopic
information on the system~\cite{keller1998equilibrium,lelievre2010free}.
For reversible dynamics, this convergence is known to be characterized by an entropy
functional~\cite{touchette2009large,barato2015formal}, which generalizes results for small
fluctuations such as the central limit theorem~\cite{komorowski2012fluctuations} or
Berry-Esseen type inequalities~\cite{nourdin2012normal}. It has been shown for some time that the
approach can be extended to nonequilibrium systems by considering generalized entropy and
free energy functionals, as provided by the theory of large
deviations~\cite{dembo2010large,ellis2007entropy,touchette2009large}. From a more computational
perspective, studying the convergence of empirical averages is an important problem
for the efficiency of Monte Carlo Markov Chain
methods~\cite{asmussen2007stochastic,robert2013monte,rey2015irreversible,moulines2019markov}.

Since its initiation by Cram\'{e}r in the 30s~\cite{cramer1938,cramer2018new}, large deviations
theory has been
given many extensions. The theory takes its origin in the study of fluctuations for sums of
independent variables, leading to the celebrated Sanov theorem~\cite{den2008large}. Interestingly,
the necessity of Cram\'{e}r's exponential moment condition for Sanov's theorem to hold in a
Wasserstein topology has been proved only recently~\cite{wang2010sanov}.

Due to the above mentioned applications,
it is natural to try to apply such a theory to diffusions, or more generally Markovian dynamics.
This is useful for instance in statistical physics, when considering Gallavotti--Cohen
fluctuation relations for irreversible
dynamics~\cite{gallavotti1995dynamical,lebowitz1999gallavotti,kurchan1998fluctuation}, as well
as for characterizing dynamical phase transitions in physical
systems~\cite{garrahan2009first,baek2018dynamical,nemoto2017finitephase,nyawo2017minimal}.
From a more computational perspective, studying the rate function associated with
a given dynamics is interesting for designing better sampling
strategies~\cite{dupuis2012infinite,rey2015irreversible,rey2016improving},
which is important for instance in a Bayesian framework~\cite{box2011bayesian,blei2017variational}
or for molecular dynamics~\cite{leimkuhler2016computation,lelievre2016partial}.
The approach can also be used for deriving concentration results such as Bernstein-type
inequalities~\cite{gao2014bernstein,birrell2018uncertainty} and uncertainty quantification
bounds~\cite{katsoulakis2017scalable,gourgoulias2017biased}.

However, proving a large deviations principle for correlated processes turns out
to be a difficult task. A milestone in the theory is the series of papers by Donsker
and Varadhan~\cite{donsker1975asymptoticI,donsker1975asymptoticII,donsker1976asymptoticIII,donsker1983asymptoticIV}
and the dual approach followed by Gärtner and Ellis~\cite{gartner1977large,ellis1984large}.
The strategy of the former works is to build explicitly lower and upper large deviations bounds from
the Girsanov theorem and the Tchebychev inequality~\cite{varadhan1984large}. On the other
hand, the Gärtner--Ellis theorem relies on the existence and regularity of a free energy
functional. This technique has been later related to optimal control problems through the
so-called weak convergence approach~\cite{dupuis2011weak,dupuis2018large}.

Whichever strategy is chosen, proving large deviations principles for empirical measures
of diffusions in unbounded configuration spaces remains difficult. Indeed, studying the stability
of unbounded Markov processes is already challenging, and often relies on Lyapunov function
techniques~\cite{meyn2012markov,mattingly2002ergodicity,bellet2006ergodic,hairer2011yet}. Such
a Lyapunov function can be interpreted as an energy associated with the system,
which decreases in average and provides a control on the excursions of the process far away from
the origin. This technique can be used for proving LDPs, see for
instance~\cite[Section~9]{varadhan1984large}
and~\cite{deuschel2001large,wu2001large,dupuis2018large}. However, the LDPs of the above mentioned works
are stated in the so-called strong (resp. weak) topology, \textit{i.e.} with respect to
the topology on measures associated with the convergence of measurable bounded (resp. continuous
bounded) functions. To the best of our knowledge, convergence in Wasserstein-like topologies
(\textit{i.e.} associated with unbounded functions) for diffusions
has only been addressed in~\cite{kontoyiannis2005large},
and~\cite[Section~2.2]{wu2001large}. Unfortunately,
the nonlinear approach of~\cite{kontoyiannis2005large} does not allow to characterize
precisely the set of functions for which the LDP holds, while~\cite{wu2001large} considers
a particular system (Langevin dynamics). In both cases, the rate function is not related to the
standard Donsker--Varadhan theory~\cite{donsker1975variational}. Our first result is to derive the
LDP in a weak topology associated with unbounded functions, under very natural conditions, and to express the rate function
in duality with a free energy. From a practical
point of view, this allows to compute the rate function from the free energy, a standard
procedure~\cite{giardina2006direct,touchette2009large,chetrite2015nonequilibrium,nemoto2016population,ferre2018adaptive}.

Once a large deviations principle has been derived, providing alternative expressions
of the rate function is an important problem. This can be useful for computing this function
more efficiently, or for interpreting some key aspects of the dynamics (such as irreversibility
for physical systems). Our first contribution in this direction is to
derive a variational representation of the rate function similar to the Donsker--Varadhan
formula~\cite{donsker1975variational}. This provides a variational representation of the principal
eigenvalue for any non-symmetric linear second order differential operator
associated with a diffusion, under confinement and regularity conditions.  To the best of our
knowledge, there is no such formula in an unbounded setting, a
fortiori for unbounded functions.
Finally, it has been shown in a pioneering work~\cite{bodineau2008large}, for a specific choice of dynamics, that
the above mentioned duality allows to decompose the rate function into two
parts: one corresponding to a ``reversible'' part and the other
to an ``irreversible part'' of the dynamics. We extend these results to
general diffusions by using Sobolev seminorms, a feature inspired by the small fluctuations
framework developed in~\cite{komorowski2012fluctuations}. This decomposition turns out to be useful
for various purposes. For illustration we apply it to study more precisely the rate function of the Langevin
dynamics, in particular its dependence on the friction both in the Hamiltonian and overdamped
limits.

We now sketch the main results of the paper, the precise setting being presented in Section~\ref{sec:setting}.

\paragraph{Main results.} Consider a diffusion process~$(X_t)_{t\geq 0}$ over a state
space $\X\subset\R^d$ with generator~$\Lc$, invariant probability
measure~$\mu$, and empirical measure
\begin{equation}
  \label{eq:Ltintro}
L_t:=\frac{1}{t}\int_0^t \delta_{X_s}\,ds, \quad t\geq 0,
\end{equation}
where~$\delta_x$ is the Dirac measure at $x\in\X$.

Our first contribution is to prove a large
deviations principle for the empirical measure~$(L_t)_{t\geq 0}$ in a
weak topology associated with an unbounded function $\kap:\X\to [1,+\infty)$.
That is, we prove the following type of long time scaling: for $\Gamma\subset \PX$,
\begin{equation}
  \label{eq:proba}
\proba \big( L_t \in \Gamma \big)\asymp \e^{- t \inf_{\nu\in\Gamma}\,I(\nu)},
\end{equation}
where~$I$ is a rate function. Here,~$\PX$ denotes the set of probability measures
on~$\X$, and the above scaling holds for the weak topology on~$\PX$ associated with
measurable functions~$f$ satisfying
\begin{equation}
  \label{eq:fintro}
\|f \|_{\Binfty_\kap}:=\sup_{x\in\X}\frac{|f(x)|}{\kap(x)}<+\infty.
\end{equation}

As is standard for LDPs on unbounded
state spaces~\cite{varadhan1984large,wu2001large}, our result relies on the existence of a
twice differentiable Lyapunov function $W:\X\to [1,+\infty)$ such that 
  \begin{equation}
      \label{eq:psiintro}
\Ps := - \frac{\Lc W}{W}
\end{equation}
has compact level sets (in other words, it goes to infinity at infinity). Unlike previous
works, where this condition implies the asymptotic equivalence~\eqref{eq:proba}
in the weak topology corresponding to the convergence of measures tested against
bounded test functions~\cite{varadhan1984large,dupuis2018large,wu2001large},
we show in Section~\ref{sec:LDP} that the LDP holds for the weak topology associated with
any cost function~$\kap$ controlled by~$\Ps$ (see Section~\ref{sec:setting} for details).
Moreover, the associated rate function $I:\mathcal{P}(\X)\to [0,+\infty]$, also called
entropy, reads
\[
\forall\,\nu\in \mathcal{P}(\X),\quad I(\nu) =
\sup_{\|f\|_{\Binfty_\kap}<+\infty}\ \big\{\nu(f) - \lambda(f) \big\},
\]
where
\begin{equation}
  \label{eq:SCGFintro}
\lambda(f)  = \lim_{t\to+\infty} \frac{1}{t}\log \E\left[ \e^{\int_0^t f(X_s)\,ds}\right],
\end{equation}
is the cumulant or free energy function.

We mention that our strategy relies on the
Gärtner--Ellis theorem, according to which the existence and regularity of~\eqref{eq:SCGFintro}
implies the large deviations principle. We actually show that~\eqref{eq:SCGFintro} is
well-defined because it matches the principal eigenvalue of the Feynman--Kac operator
\begin{equation}
  \label{eq:FKintro}
P_t^f :\varphi \mapsto \E\left[ \varphi(X_t)\,\e^{\int_0^t f(X_s)\,ds}\right].
\end{equation}
A key remark for defining the above operator is that the process
\begin{equation}
  \label{eq:martingaleintro}
M_t = W(X_t)\,\e^{-\int_0^t \frac{\Lc W}{W}(X_s)\,ds}
\end{equation}
is a local martingale, as noted by Wu in~\cite{wu2001large}. This allows to
define~\eqref{eq:FKintro} for functions~$\varphi$ such that $\|\varphi\|_{\Binfty_W}<+\infty$,
as soon as~$f$ is dominated by the function~$\Ps$ defined in~\eqref{eq:psiintro}.
As a result, for any such~$f$,  the operator~\eqref{eq:FKintro} can be shown to be
compact over the space of functions controlled by~$W$
(see~\cite{gartner1977large,ferre2018more}), and the functional~\eqref{eq:SCGFintro} is obtained
as the largest eigenvalue of the operator~\eqref{eq:FKintro} through a generalized Perron--Frobenius
theorem (the Krein--Rutman theorem~\cite{deimling2010nonlinear}).

The second part of our work consists in rewriting the rate function~$I$.
For this, we first show that
\begin{equation}
  \label{eq:Idonskerintro}
\forall\,\nu\in \mathcal{P}(\X),\quad I(\nu)
= \sup\left\{ -\intX  \frac{\Lc u}{u}\,d\nu,\ u\in \mathcal{D}^+(\Lc)\right\},
\end{equation}
where $\mathcal{D}^+(\Lc)$ is an appropriate domain defined in Section~\ref{sec:Donsker}. This formula
is similar to the one proved in~\cite{donsker1975variational}, but differs
by additional growth conditions in the definition of~$\mathcal{D}^+(\Lc)$. This
result leads to a variational formula for the largest eigenvalue~$\lambda(f)$ of the operator
$P_t^f$ defined on a suitable functional space through
\[
\lambda(f) = \sup_{\nu\in\mathcal{P}(\X)}\big\{\nu(f) - I(\nu)\big\}.
\]
We mention that the proof of~\eqref{eq:Idonskerintro} relies on the spectral
problem associated with the Feynman--Kac operator~\eqref{eq:FKintro}, and uses tools
from the recent work~\cite{ferre2018more}.

Finally, the variational representation~\eqref{eq:Idonskerintro} allows to
generalize the results of~\cite{bodineau2008large} by splitting~$I$ into two parts. More
specifically, denoting
by $\Lc = \LS + \LA$ the decomposition into symmetric and antisymmetric parts of the generator
considered on~$L^2(\mu)$, we obtain, for any $\nu\ll\mu$:
\[
I(\nu) = \frac{1}{4} \left| \log \frac{d\nu}{d\mu}\right|_{\H^1(\nu)}^2
+ \frac{1}{4} \left|\LA \left(\log \frac{d\nu}{d\mu}\right) \right|_{\H^{-1}(\nu)}^2,
\]
where $|\cdot|_{\H^1(\nu)}$ and $|\cdot|_{\H^{-1}(\nu)}$ refer to Sobolev seminorms defined
in Section~\ref{sec:setting}. Interestingly, the proof relies on a generalized Witten
transform performed in the variational representation~\eqref{eq:Idonskerintro}, which
we may therefore call variational Witten transform. This shows that, for a given invariant
measure, an irreversible dynamics ($\LA\neq 0$) produces more entropy than a reversible one, in
accordance with the second law of thermodynamics.
This decomposition is useful for instance to study the entropy production of the Langevin
dynamics, which is irreversible but has a particular structure. In this case, there is a natural
identification of the effect of the reversible and irreversible parts of the dynamics on fluctuations.

\paragraph{Organization of the work.}
The paper is organized as follows. In Section~\ref{sec:LDP} we prove the large deviations
principle  under Lyapunov
and regularity conditions. In Section~\ref{sec:Donsker}
we rewrite the rate function and give its decomposition into symmetric
and antisymmetric parts. Some examples of application are given in
Section~\ref{sec:applications}, in particular for overdamped and underdamped Langevin dynamics.
Section~\ref{sec:discussion} discusses possible extensions and connections with related works.
Finally, most of the proofs are postponed to Section~\ref{sec:proofs}.

\section{Large deviations principle}
\label{sec:LDP}

\subsection{Setting}
\label{sec:setting}
This section introduces the main notation used throughout the paper. 
We consider a diffusion process~$(X_t)_{t\geq 0}$ evolving in $\X= \R^d$
with $d\in\N\setminus\{0\}$, and satisfying the following stochastic differential equation (SDE):
\begin{equation}
  \label{eq:SDE}
dX_t = b(X_t)\, dt + \sigma (X_t)\, dB_t,
\end{equation}
where $b:\X \to \R^d$, $\sigma:\X \to \R^{d\times m}$ and~$(B_t)_{t\geq 0}$ is a
$m$-dimensional Brownian motion for some $m\in\N^*$.

\begin{remark}
The analysis can easily be extended
with appropriate modifications to other spaces~$\X$ such as $\X = \mathbb{T}^d$ or $\X=\mathbb{T}^d\times \R^d$,
where~$\mathbb{T}^d$ is the $d$-dimensional torus. The last case is motivated by
applications to the Langevin equation, where~$\mathbb{T}^d$ would be a bounded position space
and~$\R^d$ the unbounded momentum space (see Section~\ref{sec:langevin}).
\end{remark}

The generator of the
dynamics~\eqref{eq:SDE}, denoted by~$\Lc$, reads
\begin{equation}
  \label{eq:gen}
\Lc = b\cdot\nabla +   S : \nabla^2,\quad\mbox{ with }\ S=\frac{\sigma \sigma^T}{2},
\end{equation}
where $\sigma^T$ denotes the transpose of the matrix~$\sigma$ and~$\cdot$ is the
scalar product on~$\R^d$. Moreover,~$\nabla^2$ stands for the Hessian matrix, and for two
matrices~$A,B$ belonging to~$\R^{d \times d}$, we write $A:B=\mathrm{Tr}(A^TB)$.
The conditions on~$b$ and~$\sigma$ will be made precise in
Section~\ref{sec:results}. The function~$S$ takes values in the set of symmetric positive
matrices (not necessarily definite). We also introduce the \emph{carré du champ}
operator~\cite{bakry2013analysis}
associated with~$\Lc$ defined by, for two regular functions $\varphi$, $\psi$:
\begin{equation}
  \label{eq:carre}
\mathscr{C}(\varphi,\psi)= \frac{1}{2}\big( \Lc(\varphi\psi) -  \varphi \Lc \psi - \psi \Lc \varphi 
\big) = \nabla \varphi\cdot S \nabla \psi.
\end{equation}
We will use the space~$\Cinfc(\X)$ (resp.~$\Cb(\X)$) of smooth functions
with compact support (resp. continuous and bounded functions), as well as the space of smooth
functions growing at most polynomially and whose derivatives also grow at most polynomially:
\[
\mathscr{S}=\left\{\varphi\in \Cinfty(\X)\, \middle|\,\forall\,\alpha\in\N^d, \, \exists\,N>0\
\mbox{such that}\
\sup_{x\in\X}\,\frac{|\partial^{\alpha}\varphi(x)|}{(1 + |x|^2)^N}<+\infty \right\},
\]
where $\partial^{\alpha} = \partial_{x_1}^{\alpha_1} \dots \partial_{x_d}^{\alpha_d}$ with $\alpha = (\alpha_1,\dots\alpha_d)$.

The space of bounded measurable functions, denoted by~$\Binfty(\X)$, is endowed with the norm
\[
\| \varphi \|_{\Binfty} = \underset{x\in\X}{\sup}\ | \varphi(x) |.
\]
Moreover, we will need weighted function spaces and the corresponding probability measure
spaces, which commonly appear in Markov
chain theory~\cite{meyn2012markov,kontoyiannis2005large,hairer2011yet}.
For any measurable function $W:\X\to[ 1,+\infty)$ we define 
\begin{equation}
  \label{eq:BW}
\Binfty_W(\X) = \left\{
\varphi : \X\to \R \, \textrm{measurable} \ \middle| \ \|\varphi \|_{\Binfty_W}:=
\underset{x\in\X}{\sup}\ \frac{ |\varphi(x)|}{W(x)} <+\infty
\right\},
\end{equation}
and the associated space of probability measures
(see~\cite[Chapter~2]{rudin2006real} for duality results on measure spaces):
\begin{equation}
  \label{eq:PW}
\mathcal{P}_W(\X) = \Big\{ \nu \in\PX \ \Big| \ \nu(W) < +\infty
\Big\}.
\end{equation}
The associated weighted total variation distance is (see for instance~\cite{hairer2011yet}):
\begin{equation}
  \label{eq:dw}
\forall\, \nu,\eta\in \mathcal{P}_W(\X),\quad
\mathrm{d}_W(\nu,\eta) = \underset{\| \varphi \|_{\Binfty_W}\leq 1}{\sup} \left\{
\intX \varphi \, d\nu - \intX \varphi \, d\eta\right\} = \intX W(x) |\nu - \eta|(dx),
\end{equation}
where~$|\nu - \eta|$ denotes the total variation measure associated to~$\nu - \eta$,
see~\cite[Chapter~6]{rudin2006real}.

\begin{remark}
Note that the spaces~\eqref{eq:BW} and~\eqref{eq:PW} are defined for
an arbitrary measurable function $W\geq 1$.
It is possible to weaken the assumption $W\geq 1$ but we
will not need these refinements in this paper.
\end{remark}

We denote by $\tau$-topology the weak topology on~$\PX$ associated with the convergence of measures tested against functions belonging to~$\Binfty(\X)$ (we may also use the notation~$\sigma(\PX,\Binfty)$); see~\cite{deuschel2001large}. This means that for a sequence~$(\nu_n)_{n\in\N}$ in~$\PX$, $\nu_n\to\nu$ in the $\tau$-topology if $\nu_n(\varphi)\to\nu(\varphi)$ for any $\varphi \in \Binfty(\X)$. Recall that the $\tau$-topology is stronger than the usual weak topology $\sigma(\PX,\Cb(\X))$ on~$\PX$, which corresponds to the convergence $\nu_n(\varphi)\to\nu(\varphi)$ for any $\varphi \in \Cb(\X)$. The $\tau$-topology can be extended to account for convergence of measures tested against the larger class of functions $\varphi\in\Binfty_W(\X)$. We denote by~$\tau^W$ the associated topology $\sigma(\mathcal{P}_W(\X),\Binfty_W(\X))$, see~\cite{wu2001large,kontoyiannis2005large}.


We associate to the dynamics~$(X^{(x)}_t)_{t\geq 0}$ started from~$X_0^{(x)} = x \in \X$ the semigroup~$(P_t)_{t\geq 0}$ 
defined through
\begin{equation}
  \label{eq:Pt}
\forall\, \varphi\in\Binfty(\X),\quad (P_t\varphi)(x)= \E \left[ \varphi\left(X^{(x)}_t\right) \right],
\end{equation}
where~$\E$ stands for the expectation with respect to all realizations of the Brownian
motion in~\eqref{eq:SDE}. Let us mention that, with some abuse of notation but for the sake of readability, we will not write out explicitly the dependence of~$X_t$ on~$x$ in the proofs presented in Section~\ref{sec:proofs}, see the discussion at the beginning of this section. We say that $\mu\in\PX$ is invariant with respect to the dynamics~$(X_t^{(x)})_{t\geq 0}$
if $(\mu P_t)(\varphi) = \mu(\varphi)$ for any $\varphi \in \Cb(\X)$, with the notation
\[
(\mu P_t)(\varphi) =\mu(P_t \varphi) = \intX \E \left[ \varphi\left(X_t^{(x)}\right) \right]\mu(dx).
\]
This implies in particular that $\mu(\Lc \varphi) =0$ for $\varphi \in \Cinfc(\X)$, see~\cite[Proposition~9.2]{EK86}.

We now follow the path of~\cite[Chapter~2]{komorowski2012fluctuations} for defining other
useful functional spaces. For any probability measure~$\mu\in\PX$, let
\begin{equation}
  \label{eq:hilbert}
  L^2(\mu) = \left\{ \varphi \mbox{ measurable}\ \left| \ \intX |\varphi|^2\,d\mu <+\infty\right.
  \right\}.
\end{equation}
For $\varphi\in\Cinfc(\X)$, we introduce the seminorm
\begin{equation}
  \label{eq:normH1}
  | \varphi |_{\H^1(\mu)}^2 = \intX \mathscr{C}(\varphi,\varphi)\,d\mu,
\end{equation}
and the equivalence relation~$\sim_1$ through: $\varphi\sim_1\psi$ if and only if
$|\varphi - \psi |_{\H^1(\mu)}=0$. We denote by~$\H^1(\mu)$ the closure of~$\Cinfc(\X)$
quotiented by~$\sim_1$ for the norm~$|\cdot|_{\H^1(\mu)}$.
Note that~$\H^1(\mu)$ and~$L^2(\mu)$ are not subspaces of each other in general,
but $\H^1(\mu)\subset L^2(\mu) $ for instance if~$\mu$ satisfies a Poincaré inequality and~$S$ is positive
definite. The difference between~$L^2(\mu)$ and~$\H^1(\mu)$ is however important for
degenerate dynamics, see the application in Section~\ref{sec:langevin}. We now construct a
space dual to~$\H^1(\mu)$ with the same density argument by introducing the 
seminorm: for $\varphi\in \Cinfc(\X)$,
\[
| \varphi |_{\H^{-1}(\mu)}^2 = \sup_{\psi\in\Cinfc(\X)}
\left\{ 2 \intX \varphi\psi\,d\mu - | \psi |_{\H^1(\mu)}^2\right\}.
\]
We define similarly the equivalence relation~$\sim_{-1}$ on~$\Cinfc(\X)$
by $\varphi\sim_{-1}\psi$ if and only if~$|\varphi - \psi|_{\H^{-1}(\mu)} =0$.
The space~$\H^{-1}(\mu)$ is then the closure of~$\Cinfc(\X)$ quotiented by~$\sim_{-1}$.
This is actually the dual space of~$\H^{1}(\mu)$,
see~\cite[Section~2.2, Claim~F]{komorowski2012fluctuations}.

Let us relate~$\H^1(\mu)$ to the more standard~$H^1(\mu)$ Sobolev space.
If~$\mu$ is invariant with respect to~$\Lc$ then, for $\varphi\in\Cinfc(\X)$,
it holds (using that $\Lc(\varphi^2) = 2\varphi\Lc\varphi + 2 \mathscr{C}(\varphi,\varphi)$)
\[
| \varphi |_{\H^1(\mu)}^2 = 2 \intX \varphi(-\Lc \varphi) \,d\mu.
\]
In particular, when $S=\id$ we have
\[
| \varphi |_{\H^1(\mu)}^2 =  \intX |\nabla \varphi|^2\,d\mu.
\]
In this case, $|\cdot|_{\H^1(\mu)}$ is the standard~$H^1(\mu)$ Sobolev
seminorm~\cite{lelievre2016partial}. An in-depth discussion on the space~$\H^1(\mu)$
and its use for proving central limit theorems for Markov processes is provided in~\cite[Chapter~2]{komorowski2012fluctuations}.

\begin{remark}
The space $\H^{-1}(\mu)$ has a role comparable to the subspace~$L_0^2(\mu)$ of functions in~$L^2(\mu)$ with average zero with respect to~$\mu$ since $\H^{-1}(\mu) \cap L^2(\mu) \subset L_0^2(\mu)$ (but of course the functions of~$\H^{-1}(\mu)$ do not belong to~$L^2(\mu)$ in general). Assume indeed that $\varphi\in L^2(\mu)$ (so $\varphi\in  L^1(\mu)$), $\intX\varphi\,d\mu\geq 0$ (which is
not restrictive upon considering $-\varphi$) and
$|\varphi|_{\H^{-1}}<+\infty$. We may choose $\psi\in\Cinfc(\X)$ such that
\[
\psi(x) = \left\{
\begin{aligned}
  1, & \quad \mathrm{if}\ |x|\leq 1, \\
  0, & \quad \mathrm{if} \ |x|\geq 2,
\end{aligned}
\right.
\]
and set $\psi_n(x) = n\psi(x/n)$, so $|\psi_n|_{\H^1(\mu)}\leq C$ for some constant $C>0$
independent of~$n$. The definition of~$\H^{-1}(\mu)$ shows that
\[
|\varphi|_{\H^{-1}(\mu)}\geq 2 n \int_{|x|\leq 2n} \psi \varphi \,d\mu - C.
\]
By the dominated convergence theorem it holds
\[
\int_{|x|\leq 2n}\psi \varphi \,d\mu\xrightarrow[n\to +\infty]{} \intX \varphi\,d\mu\geq 0.
\]
Since $|\varphi|_{\H^{-1}(\mu)}<+\infty$, we obtain by letting $n\to +\infty$ 
that $\mu(\varphi)=0$.
\end{remark}

%

We also introduce some notation concerning the growth of functions.
A function $f:\X\to\R$ is said to have \emph{compact level sets} if for any $M\in \R$ the set
\[
\big\{ x\in\X\, \big| \, f(x)\leq M \big\}
\]
is compact (with the convention that~$\emptyset$ is compact). 
A function~$g$ is said to be negligible with respect to~$f$ (denoted by $g\ll f$)
if~$f/g$ has compact level sets, and~$g$ is said to be equivalent to~$f$ (denoted by $g\sim f$) if
there exist constants~$c,c'> 0$ and $R,R'\in\R$ such that
\[
\forall\, x \in \X, \quad c' g(x) - R' \leq f(x) \leq c g(x) + R.
\]

\begin{remark}
  The above definitions are useful when the state space~$\X$ is unbounded.  A sufficient condition
  for~$f$ to have compact level sets in this case is for this function to be lower semicontinuous
  and to go to infinity at infinity (\textit{i.e.} to be coercive).
  If~$\X$ was bounded, all these criteria would be automatically met for smooth functions.
\end{remark}

Finally, we denote by $\liminfb$ and $\limsupb$ the inferior and superior limits respectively,
while for a subset $A\subset \mathcal{Y}$ of a topological space~$\mathcal{Y}$, $\mathring{A}$
and~$\bar{A}$ denote the interior and closure of~$A$ for the chosen topology
on~$\mathcal{Y}$. The function $\ind_A$ denotes the indicator function of the set~$A$,
\textit{i.e.} $\ind_A(x)=1$ if $x\in A$ and $\ind_A(x)=0$ otherwise. 
For a Banach space~$E$,~$\mathcal{B}(E)$
refers to the Banach space of bounded linear operators over~$E$ with the usual norm.
We recall some elements of large deviations theory in Appendix~\ref{sec:tools} for the reader's
convenience.

\subsection{Statement of the main results}
\label{sec:results}

The large deviations principle relies on three standard assumptions: hypoellipticity of the
generator, irreducibility of the dynamics, and a Lyapunov condition.

We start with our hypoellipticity assumption 
(which could certainly be relaxed for particular applications, see for
instance~\cite{wu2001large}). It will be useful for proving regularity of the
Feynman--Kac semigroup in Lemma~\ref{lem:regularity}. We denote by $A^\dag$ the adjoint of a (closed) operator~$A$ considered on~$L^2(dx)$.

\begin{assumption}[Hypoellipticity]
  \label{as:regularity}
  The functions~$b$ and~$\sigma$ in~\eqref{eq:SDE} belong to~$\mathscr{S}^d$ and
  $\mathscr{S}^{d\times m}$, respectively, and the generator~$\Lc$ defined
  in~\eqref{eq:gen} satisfies the hypoelliptic Hörmander condition. More precisely,
  $\Lc$ can be written as
  \begin{equation}
    \label{eq:Lhypo}
  \Lc = \sum_{i=1}^d A_i^\dag A_i + A_0,
  \end{equation}
  where $(A_i)_{i=0}^d$ are first order differential operators with coefficients
  belonging to~$\mathscr{S}$ such that the family
\[
\big\{A_i\big\}_{i=1}^d\bigcup\big\{ [A_i,A_j]\big\}_{i,j=0}^d\bigcup
\big\{ [[A_i,A_j],A_k]\big\}_{i,j,k=0}^d\hdots
\]
 spans $\R^d$ at any $x\in\X$ for a finite number of commutators~$n_x\in\N$.
\end{assumption}

This assumption is natural  in practical situations,
as illustrated in the applications of Section~\ref{sec:applications} covering elliptic and
hypoelliptic diffusions, see~\cite{hormander1967hypoelliptic,eckmann2003spectral,bellet2006ergodic}
for details. Note that excluding the operator~$A_0$ from the first family means that, if~$\Lc$
satisfies Assumption~\ref{as:regularity}, $\partial_t + \Lc$ is hypoelliptic and the
transition kernel of~$(X_t)_{t\geq 0}$ has a smooth density for any $t>0$.

The regularity requirement comes together with a controllability condition (recall that~$\sigma$ takes values
in~$\R^{d\times m }$).
\begin{assumption}[Controllability]
  \label{as:control}
  For any $x,y\in\X$ and $T>0$, there exists a control $u \in C^0([0,T],\R^m)$
  such that the path~$\phi \in C^0([0,T],\X)$ defined as
  \begin{equation}
    \label{eq:path}
  \left\{
  \begin{aligned}
    \phi(0) & = x, \\
     \dot{\phi}(t) &= b(\phi(t)) + \sigma(\phi(t))u(t),
  \end{aligned}
  \right.
  \end{equation}
  is well-defined and satisfies $\phi(T) = y$.
\end{assumption}

Assumption~\ref{as:control} together with Assumption~\ref{as:regularity} implies that the
process is irreducible, \textit{i.e.} that the
transition density of~$(X_t)_{t\geq 0}$ is everywhere positive (by adapting the argument
of~\cite[Proposition~8.1]{bellet2006ergodic}), which will be used
in Lemma~\ref{lem:irreducibility}. Note that constructing a control~$u \in C^0([0,T],\X)$ may be
difficult in general~\cite{jurdjevic1997geometric}. However,
for the overdamped and underdamped Langevin dynamics we are interested in, building such a control
turns out to be guenuinely feasible,
see~\cite{mattingly2002ergodicity,talay2002stochastic,bellet2006ergodic,lelievre2016partial,lu2019geometric} and 
references therein. Let us mention that the above two assumptions are standard for proving
LDPs~\cite{varadhan1984large,wu2001large}.

A recurrent idea when studying Markov chain stability and large deviations on
an unbounded state space is to reduce the analysis to a compact set and to control the excursions
of the dynamics out of this set with a Lyapunov function~\cite{meyn2012markov,wu2001large}. 
Our Witten--Lyapunov condition for the dynamics reads as follows (for the terminology,
see Remark~\ref{rem:cramer} below).

\begin{assumption}[Witten--Lyapunov condition]
  \label{as:lyapunov}
  There exists a function $W:\X\to [1,+\infty)$ of class~$C^2(\X)$, with compact
    level sets and such that 
    \begin{equation}
      \label{eq:lyapunov}
    \Ps:= - \frac{\Lc W}{W}
    \end{equation}
    has compact level sets. Moreover, there exists a~$C^2(\X)$
    function~$\mathscr{W}:\X\to [1,+\infty)$ such that, for some constants $C_1 >0$, $C_2\in\R$,
  \begin{equation}
    \label{eq:restriction}
    \mathscr{W}^2 \leq C_1 W,\quad
    \Ps\sim -\frac{\Lc \mathscr{W}}{\mathscr{W}}, \quad
  - 2 \frac{\Lc \mathscr{W}}{\mathscr{W}} \leq - \frac{\Lc W}{W} + C_2.
  \end{equation}    
  In all what follows, we consider an arbitrary function~$\kap:\X\to[1;+\infty)$
    belonging to~$\mathscr{S}$ such that:
    \begin{itemize}
    \item  $\kap\ll \Psi$;
    \item  either (i) $\kap$ bounded, or (ii) $\kap$~has compact level sets and there exists $c \in \mathbb{R}$ such that
      \begin{equation}
        \label{eq:Lyapunov_kappaW}
        \Lc(\kappa W) \leq c \kappa W.
      \end{equation}
    \end{itemize}
  \end{assumption}
 
\begin{remark}
  Note that the condition $\mathscr{W}^2 \leq C_1 W$ implies in particular that $\mathscr{W} \ll W$.
  In addition, since~$\kap\ll \Ps$ and~$\Ps\sim - \frac{\Lc \mathscr{W}}{\mathscr{W}}$,
  it holds~$\kap\ll- \frac{\Lc \mathscr{W}}{\mathscr{W}}$.
  These facts will be frequently used in the proofs.  Moreover the
  conditions~\eqref{eq:restriction} are not restrictive for exponential-like
  Lyapunov function as shown in Proposition~\ref{prop:nonlin} below -- the idea being
  that~$\mathscr{W}$ can be set to~$\sqrt{W}$. The condition~\eqref{eq:Lyapunov_kappaW} also
  typically holds because~$W$ is chosen of exponential type while~$\kappa$ is a polynomial.
  In practice, the auxiliary function~$\mathscr{W}$
  is used to obtain some control in the proofs of Lemmas~\ref{lem:feynmankac}
  and~\ref{lem:regularity} (in particular to apply a Grönwall lemma).
  Assumption~\ref{as:lyapunov} could certainly be phrased differently, possibly
  with weaker conditions on the functions at stake.
\end{remark}

Although we stated Assumption~\ref{as:lyapunov} in order to fit standard conditions when considering
large deviations on unbounded state spaces~\cite{varadhan1984large,wu2001large}, in practice
it can be obtained from a non-linear Lyapunov condition in the spirit
of~\cite{kontoyiannis2005large} and~\cite[Condition 2.2]{dupuis2018large}. This is the purpose
of the next proposition, whose proof is postponed to Appendix~\ref{sec:nonlin}.

\begin{proposition}
  \label{prop:nonlin}
  Assume that there exists $V\in \mathscr{S}$ such that:
  \begin{itemize}
  \item $V$ has compact level sets;
  \item $|\sigma^T\nabla V|$ has compact level sets;
  \item for any~$\theta\in(0,1)$,
  \begin{equation}
    \label{eq:nonlin}
  -\Lc V - \frac{\theta}{2}|\sigma^T\nabla V|^2\sim  |\sigma^T\nabla V|^2,
  \end{equation}
  \end{itemize}
  Then Assumption~\ref{as:lyapunov} is satisfied with
  \[
  W(x) = \e^{\theta V(x)},\qquad \mathscr{W}(x) = \e^{\varepsilon V(x)},
  \]
  for $\theta\in(0,1)$ and $\varepsilon< \theta/2$ small enough. In this case it holds
  \[
  \Ps \sim |\sigma^T\nabla V|^2.
  \]
  Moreover, condition~\eqref{eq:Lyapunov_kappaW} holds true for any function
  $\kap:\X\to[1,+\infty)$ of class~$\mathscr{S}$ such that either (i)~$\kap$ is bounded
    or (ii)~$\kap$ has
    compact level sets, satisfies $\kap\ll \Psi$ and there exists~$C \geq 0$ with
    \begin{equation}
      \label{eq:condkappa}
     \Lc \kap \leq C \kap,\qquad |\sigma^T \nabla \log \,\kap|\leq C.
    \end{equation}
\end{proposition}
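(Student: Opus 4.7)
The overall strategy is to plug the ansatz $W = \e^{\theta V}$ and $\mathscr{W} = \e^{\varepsilon V}$ into the quantities appearing in Assumption~\ref{as:lyapunov}, convert everything into expressions in $\Lc V$ and $|\sigma^T\nabla V|^2$, and then invoke the nonlinear Lyapunov condition~\eqref{eq:nonlin} together with elementary algebra. The only analytical tool beyond the hypothesis itself is Cauchy--Schwarz plus Young's inequality for the $\kappa W$ estimate.

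The key identity is the chain rule for $\Lc$ applied to an exponential: for any smooth $h$,
\[
\Lc(\e^h) = \e^h\big(\Lc h + \mathscr{C}(h,h)\big) = \e^h\!\left(\Lc h + \tfrac{1}{2}|\sigma^T\nabla h|^2\right).
\]
With $h = \alpha V$ this gives
\[
-\frac{\Lc \e^{\alpha V}}{\e^{\alpha V}} = \alpha\!\left(-\Lc V - \frac{\alpha}{2}|\sigma^T\nabla V|^2\right).
\]
Taking $\alpha = \theta$, the right-hand side is $\sim|\sigma^T\nabla V|^2$ by~\eqref{eq:nonlin}, so $\Psi \sim |\sigma^T\nabla V|^2$, which has compact level sets by assumption. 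Taking $\alpha = \varepsilon$, I would rewrite
\[
-\Lc V - \tfrac{\varepsilon}{2}|\sigma^T\nabla V|^2 = \underbrace{\Big(-\Lc V - \tfrac{\theta}{2}|\sigma^T\nabla V|^2\Big)}_{\sim|\sigma^T\nabla V|^2} + \tfrac{\theta-\varepsilon}{2}|\sigma^T\nabla V|^2,
\]
so that $-\Lc\mathscr{W}/\mathscr{W} \sim |\sigma^T\nabla V|^2 \sim \Psi$.

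For the three conditions in~\eqref{eq:restriction}: the domination $\mathscr{W}^2 \leq C_1 W$ amounts to $\e^{(2\varepsilon-\theta)V}\leq C_1$, valid as soon as $2\varepsilon < \theta$ and $V$ is bounded below (which follows from compact level sets and continuity). The equivalence $\Psi \sim -\Lc\mathscr{W}/\mathscr{W}$ was established above. For the last inequality, subtracting the two explicit expressions yields
\[
-2\frac{\Lc\mathscr{W}}{\mathscr{W}} + \frac{\Lc W}{W} = (\theta-2\varepsilon)\Lc V \;+\; \Big(\tfrac{\theta^2}{2}-\varepsilon^2\Big)|\sigma^T\nabla V|^2,
\]
and using $-\Lc V \geq (c' + \theta/2)|\sigma^T\nabla V|^2 - R'$ (from the lower bound in the equivalence), I would check that the net coefficient of $|\sigma^T\nabla V|^2$ becomes non-positive for $\varepsilon$ small enough; this is the main piece of bookkeeping and reduces to a linear constraint of the form $\varepsilon \leq c'\theta/(2c'+\theta)$, which is met by choosing $\varepsilon$ sufficiently small.

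For condition~\eqref{eq:Lyapunov_kappaW} under hypothesis~\eqref{eq:condkappa} (case (ii)), I would apply the Leibniz rule
\[
\Lc(\kap W) = \kap \Lc W + W \Lc\kap + 2\mathscr{C}(\kap,W) = -\kap W\Psi + W\Lc\kap + \theta W(\sigma^T\nabla\kap)\cdot(\sigma^T\nabla V),
\]
then use $\Lc\kap \leq C\kap$ and $|\sigma^T\nabla\kap| = \kap|\sigma^T\nabla\log\kap|\leq C\kap$ together with Cauchy--Schwarz to bound the cross term by $\theta C \kap W |\sigma^T\nabla V|$, which by Young's inequality is absorbed into a constant times $\kap W$ plus $(\theta/2)\kap W |\sigma^T\nabla V|^2$. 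The remaining quadratic term is then dominated by $\kap W \Psi$ via the lower bound $\Psi \geq c_1|\sigma^T\nabla V|^2 - R_1$, provided $\theta$ is chosen small enough that $\theta/2 \leq c_1$. Case (i) is handled more crudely, since a bounded $\kap$ allows direct control by the boundedness of $\Psi$ from below.

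The main obstacle is the careful algebraic accounting of Paragraph~3: one must extract, from the $\sim$-relation~\eqref{eq:nonlin}, a quantitative lower bound on $-\Lc V$ that is strong enough to beat the $(\theta^2/2-\varepsilon^2)|\sigma^T\nabla V|^2$ term, and then translate this into the explicit smallness requirement on $\varepsilon$. Once this is done, the remainder of the proof is a routine assembly of the pieces above.
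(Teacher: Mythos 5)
Most of your argument coincides with the paper's own proof: the exponential chain rule giving $-\Lc \e^{\alpha V}/\e^{\alpha V}=\alpha\big(-\Lc V-\tfrac{\alpha}{2}|\sigma^T\nabla V|^2\big)$, the identification $\Ps\sim|\sigma^T\nabla V|^2$, the bound $\mathscr{W}^2\leq C_1 W$ from $2\varepsilon<\theta$ and $V$ bounded below, the equivalence $\Ps\sim-\Lc\mathscr{W}/\mathscr{W}$, and the third inequality in~\eqref{eq:restriction} obtained by taking $\varepsilon$ small (your explicit constraint $\varepsilon\leq c'\theta/(2c'+\theta)$ is a correct reformulation of the paper's computation). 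The Leibniz-rule start of the $\kap W$ estimate is also the paper's.

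There is, however, a genuine flaw in the last step, for~\eqref{eq:Lyapunov_kappaW}. You apply Young's inequality with the \emph{fixed} weight $\theta/2$ on the quadratic term, so you need $\tfrac{\theta}{2}|\sigma^T\nabla V|^2\leq \Ps+\mathrm{const}$, i.e.\ $\theta/2\leq c_1$ where $\Ps\geq c_1|\sigma^T\nabla V|^2-R_1$, and you claim this is achieved ``provided $\theta$ is chosen small enough''. But $\Ps=\theta\big(-\Lc V-\tfrac{\theta}{2}|\sigma^T\nabla V|^2\big)$, so $c_1$ itself scales like $\theta$: writing $-\Lc V-\tfrac{\theta}{2}|\sigma^T\nabla V|^2\geq c'_\theta|\sigma^T\nabla V|^2-R'$, the requirement $\theta/2\leq c_1=\theta c'_\theta$ is equivalent to $c'_\theta\geq 1/2$, a condition on the constants in~\eqref{eq:nonlin} that shrinking $\theta$ does not produce. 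It actually fails in the most standard case: for $b=-\nabla V$, $\sigma=\sqrt{2}\,\mathrm{Id}$ one has $-\Lc V\approx\tfrac12|\sigma^T\nabla V|^2$, so $c'_\theta<1/2$ for every $\theta\in(0,1)$. Moreover, restricting $\theta$ is not allowed anyway: the statement is used later (Proposition~\ref{prop:overdamped}, Remark~\ref{rem:cramer}) for \emph{every} $\theta\in(0,1)$. The repair is exactly what the paper does: keep a free Young parameter $\eta>0$, bounding the cross term by $\tfrac{\eta}{2}\theta^2|\sigma^T\nabla V|^2+\tfrac{1}{2\eta}|\sigma^T\nabla\log\kap|^2$, and then choose $\eta$ small enough (depending on $\theta$) so that $\tfrac{\eta}{2}\theta^2|\sigma^T\nabla V|^2\leq\Ps+\mathrm{const}$, which is possible precisely because $\Ps\sim|\sigma^T\nabla V|^2$ and~\eqref{eq:condkappa} controls the remaining terms. (Your one-line remark on case (i) is immaterial: for bounded $\kap$, Assumption~\ref{as:lyapunov} does not require~\eqref{eq:Lyapunov_kappaW} at all.)
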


Note that~\eqref{eq:nonlin} means that the term~$-\Lc V$ coming from the dynamics must compensate
the quadratic loss proportional to~$|\sigma^T\nabla V|^2$. We also mention that the
condition~\eqref{eq:condkappa} is not restrictive in general since it is typically satisfied by polynomial-like functions~$\kap$.

A first consequence of Assumptions~\ref{as:regularity} to~\ref{as:lyapunov} is the ergodicity of the dynamics, whatever the initial distribution for~$X_0$. 

\begin{proposition}
  \label{prop:ergodicity}
  Under Assumptions~\ref{as:regularity},~\ref{as:control}
  and~\ref{as:lyapunov},~\eqref{eq:SDE} has a global strong solution, and the
  process~$(X_t^{(x)})_{t\geq 0}$ admits a unique invariant probability measure
  $\mu\in \mathcal{P}_W(\X)$. This
  measure has a positive $\Cinfty(\X)$-density with respect
  to the Lebesgue measure: there exists $\rho^\mu\in\Cinfty(\X)$ with $\rho^{\mu}>0$ such
  that $\mu(dy)=\rho^{\mu}(y)\,dy$. Moreover, the dynamics is ergodic with respect to~$\mu$:
  there exist $C,c>0$ such that
  \[
  \forall\, t\geq 0,\quad \forall\, \varphi \in \Binfty_W(\X),\quad 
  \big\| P_t\varphi - \mu(\varphi) \big\|_{\Binfty_W} \leq C \e^{- c t}
  \|\varphi - \mu(\varphi)\|_{\Binfty_W}.
  \]
  Equivalently,
  \[
  \forall\, t \geq 0, \quad 
  \forall\, \nu \in \mathcal{P}_W(\X), \quad
  \mathrm{d}_W(\nu P_t, \mu) \leq C \e^{- c t}\mathrm{d}_W(\nu,\mu).
  \]
\end{proposition}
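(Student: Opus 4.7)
The plan is to combine three classical ingredients: an It\^o-type moment estimate to rule out explosion, a Harris-type theorem (geometric drift plus minorization) to obtain the invariant measure together with geometric ergodicity in the weighted norm, and hypoellipticity plus controllability to identify the regularity and positivity of the stationary density.

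First, I would establish the existence of a non-exploding strong solution. Since $b\in\mathscr{S}^d$ and $\sigma\in\mathscr{S}^{d\times m}$ are smooth, the coefficients of~\eqref{eq:SDE} are locally Lipschitz, so a unique local strong solution up to an explosion time~$\tau_\infty$ exists by classical theory. To force $\tau_\infty=+\infty$ I use the Lyapunov function~$W$: since $\Ps=-\Lc W/W$ has compact level sets it is bounded below on~$\X$, yielding a constant $c_0\geq 0$ with $\Lc W\leq c_0 W$. Applying It\^o's formula to $\e^{-c_0 t}W(X_t)$ with the stopping times $\tau_n=\inf\{t\geq 0 : |X_t|\geq n\}$ and taking expectations gives $\E[W(X_{t\wedge \tau_n})]\leq \e^{c_0 t}W(x)$, and since $W$ has compact level sets this rules out explosion in finite time.

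Next, I would obtain geometric ergodicity through a Harris-type argument. Coercivity of $\Ps$ provides, for any $\alpha>0$, a compact set $K_\alpha=\{\Ps\leq\alpha\}$ outside of which $\Lc W\leq -\alpha W$, and on which $\Lc W+\alpha W$ is bounded by a finite constant $b_\alpha$; this yields the geometric Foster--Lyapunov drift $\Lc W\leq -\alpha W+b_\alpha \ind_{K_\alpha}$. On the other hand, Assumption~\ref{as:regularity} implies that $\partial_t+\Lc$ is hypoelliptic, so the transition kernel admits a $\Cinfty$-density $p_t(x,y)$ for every $t>0$; combined with the controllability in Assumption~\ref{as:control}, the argument of~\cite[Proposition~8.1]{bellet2006ergodic} based on the Stroock--Varadhan support theorem yields $p_t(x,y)>0$ for all $x,y\in\X$ and $t>0$. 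Restricting this to a compact set produces, for each compact $C\subset\X$, constants $t_0>0$, $\eta\in(0,1)$ and a probability measure $\nu_C$ supported in $C$ with $P_{t_0}(x,\cdot)\geq \eta\,\nu_C$ for all $x\in C$. The combination of geometric drift and minorization on small sets fits the framework of~\cite{hairer2011yet} (equivalently Meyn--Tweedie~\cite{meyn2012markov}), which yields a unique invariant probability measure $\mu\in\mathcal{P}_W(\X)$ together with the stated geometric convergence in~$\Binfty_W$. The equivalent bound on $\mathrm{d}_W(\nu P_t,\mu)$ follows by duality from~\eqref{eq:dw}.

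Finally, smoothness and positivity of $\rho^\mu$ follow from invariance: $\mu=\mu P_t$ gives $\rho^\mu(y)=\intX p_t(x,y)\,\mu(dx)$, and hypoellipticity of $\partial_t+\Lc$ makes $p_t(x,y)$ smooth in $y$ uniformly on compacts in $x$, so dominated differentiation under the integral (using $\mu(W)<+\infty$ together with polynomial-in-$x$ bounds on $\partial_y^\alpha p_t$) transfers $\Cinfty$-regularity to $\rho^\mu$, while strict positivity of $p_t$ gives $\rho^\mu>0$. The step I expect to be most delicate is the rigorous establishment of the minorization condition: it requires combining hypoellipticity (smoothness of the density) with controllability (everywhere positivity via a Stroock--Varadhan-type argument), and it is precisely the resulting lower bound on $p_t$ uniform on compacts that makes the Harris theorem applicable in the weighted norm $\Binfty_W$. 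The remaining pieces are more routine applications of It\^o calculus and the standard Foster--Lyapunov theory.
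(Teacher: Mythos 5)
Your proposal is correct and follows essentially the same route as the paper: the paper notes local well-posedness, derives the drift inequality $\Lc W\leq -aW+b$ from the coercivity of $\Ps$, and then simply cites~\cite[Theorem~8.9]{bellet2006ergodic}, whose proof is precisely the Foster--Lyapunov drift plus minorization (via hypoellipticity and controllability) Harris-type argument you reconstruct. The only soft spot is your justification of the smoothness of $\rho^\mu$ by differentiating $\int_\X p_t(x,\cdot)\,\mu(dx)$ under the integral, since $\mu$ is only known to integrate $W$ (not polynomials in $x$); the cleaner standard argument, implicit in the cited reference, is that $\mu$ solves $\Lc^\dag \rho^\mu=0$ in the distributional sense and hypoellipticity then gives $\rho^\mu\in\Cinfty(\X)$, with positivity following from the everywhere-positive transition density exactly as you state.
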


\begin{proof}
  The existence of a unique local strong solution is standard when Assumption~\ref{as:regularity}
  holds, see~\cite[Chapter~IX, Exercise~2.10]{revuz2013continuous}. Assumption~\ref{as:lyapunov}
  then implies the existence of $a>0$, $b\in \R$ such that
  \[
  \Lc W \leq - a W + b,
  \]
  and global existence can be deduced from the above Lyapunov inequality~\cite{bellet2006ergodic}.
  The end of the proof is a direct application of~\cite[Theorem 8.9]{bellet2006ergodic}
  since Assumption~\ref{as:control} together with Assumption~\ref{as:regularity} ensures
  irreducibility.
\end{proof}

We can now present the large deviations principle associated with the
empirical measure of the process~$(X_t^{(x)})_{t\geq 0}$ with respect to its invariant
measure~$\mu$. Recall that the empirical measure of the process is defined by
\begin{equation}
  \label{eq:Lt}
L_t^{(x)} := \frac{1}{t} \int_0^t \delta_{X_s^{(x)}}\, ds,
\end{equation}
where $\delta_y$ denotes the Dirac mass at $y\in\X$. 
When one considers large deviations principles for empirical averages of
the form~\eqref{eq:Lt}, the topology on probability measures has to be specified. 
As mentioned in the introduction, most of the LDPs are stated in topologies
associated with bounded measurable functions (resp. continuous bounded), the so-called strong
topology or $\tau$-topology (resp. weak topology). We now prove that, in our setting,
a LDP holds in the  $\tau^\kap$-topology defined in Section~\ref{sec:setting}, for any
function~$\kap$ satisfying Assumption~\ref{as:lyapunov}.
The proof of Theorem~\ref{theo:LDP} is presented in Section~\ref{sec:proofLDP}. We
recall that a rate function is said to be good if its level sets are compact.

\begin{theorem}
  \label{theo:LDP}
  Suppose that Assumptions~\ref{as:regularity},~\ref{as:control}
  and~\ref{as:lyapunov} hold true, and
  consider a function~$\kap$ as in Assumption~\ref{as:lyapunov} and $x\in\X$ fixed. Then, the functional
  \begin{equation}
    \label{eq:SCGF}
    f\in\Binfty_\kap(\X)\mapsto \lambda(f) := \underset{t\to + \infty}{\lim}\,
    \frac{1}{t} \log \E\left[ \e^{\int_0^t f(X_s^{(x)})\, ds} \right] 
  \end{equation}
  does not depend on~$x$, is well-defined, convex and finite, and~$(L_t^{(x)})_{t\geq 0}$ satisfies a LDP in the
  $\tau^\kap$-topology with the good rate function defined by:
  \begin{equation}
    \label{eq:Ifenchel}
    \forall\, \nu\in \mathcal{P}(\X), \quad
    I(\nu) = \left\{
    \begin{aligned}
      &\underset{f \in \Binfty_\kap}{\sup} \ \big\{ \nu(f) - \lambda (f) \big\}, \quad \mbox{if}\  
     \nu\in \mathcal{P}_\kap(\X)\ \mbox{and}\ \nu \ll \mu,
    \\ &+ \infty,  \quad \mbox{otherwise}. 
    \end{aligned}
    \right.
    \end{equation}
  More precisely, for any $\tau^\kap$-measurable set $\Gamma \subset \mathcal{P}(\X)$ and any $x \in \X$, it holds
  \begin{equation}
    \label{eq:LDP}
    - \underset{\nu\in\mathring{\Gamma}}{\inf}\, I(\nu)
    \leq \
    \underset{t\to +\infty}{\liminfb}\ \frac{1}{t}\, \log\, \proba \left( L_t^{(x)} \in \Gamma \right)
    \ \leq \
    \underset{t\to +\infty}{\limsupb}\ \frac{1}{t}\, \log\, \proba \left( L_t^{(x)} \in \Gamma \right)
    \ \leq 
    - \underset{\nu\in\bar{\Gamma}}{\inf}\, I(\nu),
  \end{equation}
  where the interior and closure of~$\Gamma$ are taken with respect to the $\tau^\kap$-topology.
  Finally, for any~$\nu\in\mathcal{P}(\X)$, it holds $I(\nu)=0$ if and only if $\nu=\mu$; and,
  for any sequence $(t_n)_{ n\geq 1}$ such that $t_n/\log(n) \to +\infty$ as $n \to +\infty$, it holds
  \begin{equation}
    \label{eq:almostsure}
  L_{t_n}^{(x)} \xrightarrow[n\to+\infty]{} \mu  
  \end{equation}
  almost surely in the $\tau^\kap$-topology. 
\end{theorem}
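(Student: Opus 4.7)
The plan is to apply the abstract Gärtner--Ellis theorem in the $\tau^\kap$-topology, with the SCGF $\lambda(f)$ identified as the principal eigenvalue of the tilted Feynman--Kac semigroup
\[
P_t^f \varphi(x) = \E\bigl[ \varphi(X_t^{(x)})\, \e^{\int_0^t f(X_s^{(x)})\,ds}\bigr]
\]
acting on $\Binfty_W(\X)$. The starting point is the Wu-style observation that the process $M_t = W(X_t)\,\e^{-\int_0^t \Lc W/W(X_s)\,ds}$ is a local martingale. Since $f \in \Binfty_\kap$ and $\kap\ll\Ps=-\Lc W/W$, we can bound $\int_0^t f(X_s)\,ds \leq \varepsilon \int_0^t \Ps(X_s)\,ds + C_\varepsilon t$ for arbitrarily small $\varepsilon>0$, so that $P_t^f$ maps $\Binfty_W$ into itself. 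The auxiliary function $\mathscr{W}$ (with $\mathscr{W}^2 \leq C_1 W$ and $-2\Lc \mathscr{W}/\mathscr{W} \leq -\Lc W/W + C_2$) supplies the Grönwall-type estimates needed to make this rigorous and to control the moments of $M_t$.

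Next I would establish that $P_t^f$ is a compact (in fact quasi-compact) operator on $\Binfty_W$: hypoellipticity (Assumption~\ref{as:regularity}) gives a smooth transition kernel, and the Lyapunov inequality combined with the controllability of Assumption~\ref{as:control} gives the Doeblin-type splitting needed for quasi-compactness, in the spirit of~\cite{ferre2018more}. The Krein--Rutman theorem then produces a simple dominant eigenvalue $\e^{t\lambda(f)}$ with a positive eigenfunction $h_f\in\Binfty_W$, and the spectral gap yields the long-time limit
\[
\frac{1}{t}\log\E\bigl[\e^{\int_0^t f(X_s^{(x)})\,ds}\bigr] \xrightarrow[t\to\infty]{} \lambda(f),
\]
with the limit being independent of $x$ because $h_f$ is everywhere positive. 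Convexity of $\lambda$ is immediate from Hölder's inequality and finiteness on $\Binfty_\kap$ follows from the above bound. Gâteaux differentiability on a sufficiently rich subset of $\Binfty_\kap$ follows from simplicity of the eigenvalue.

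The hard part will be establishing the LDP in the \emph{$\tau^\kap$-topology} rather than just in the weak topology. For this I would first obtain the LDP in the $\tau^\kap$-topology via the following scheme: exponential tightness in $\tau^\kap$, namely that for each $L>0$ there exists a $\tau^\kap$-compact $K_L\subset\mathcal{P}(\X)$ with $\limsupb_t \tfrac{1}{t}\log\proba(L_t^{(x)}\notin K_L)\leq -L$. The compact sets of the form $\{\nu : \nu(\kap\cdot\ind_{\{\kap\geq R\}})\leq \eta_R\}$ with $\eta_R\to 0$ suffice by a weighted Prokhorov-type criterion. The exponential moment bounds on $\int_0^t \kap(X_s)\,ds$ needed to populate these sets come precisely from the Lyapunov-type inequality $\Lc(\kap W)\leq c \kap W$ in case (ii) of Assumption~\ref{as:lyapunov} (the bounded case is trivial), via $\E[\kap(X_s) W(X_s)]\leq \e^{cs} (\kap W)(x)$ and Markov's inequality. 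Combining exponential tightness, the existence of $\lambda$, and a projective limit / inverse-contraction argument on cylinder sets generated by $\Binfty_\kap$-test functions, Gärtner--Ellis produces the LDP~\eqref{eq:LDP} with the rate function $I$ given by the Legendre dual~\eqref{eq:Ifenchel}. Goodness of $I$ is a consequence of exponential tightness.

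Finally, the restriction $\nu\ll\mu$ in~\eqref{eq:Ifenchel} is obtained by testing $I(\nu)$ against approximations of indicator functions of $\mu$-null sets (using Assumption~\ref{as:lyapunov} to stay within $\Binfty_\kap$), while $I(\mu)=0$ follows from $\lambda(0)=0$ combined with the ergodic convergence $L_t\to\mu$ in Proposition~\ref{prop:ergodicity}. Uniqueness of the zero uses strict convexity of $\lambda$ near the origin, itself a consequence of the variance of $\int_0^t f\,dL_t$ being strictly positive for non-constant $f$ thanks to the positive definiteness inherited from hypoellipticity and irreducibility. The almost sure convergence~\eqref{eq:almostsure} is a routine Borel--Cantelli application: for any $\tau^\kap$-neighborhood $\Gamma$ of $\mu$, the LDP upper bound yields $\proba(L_{t_n}^{(x)}\notin\Gamma)\leq \e^{-ct_n}$ for some $c>0$ and $n$ large, and $\sum_n \e^{-ct_n}<+\infty$ as soon as $t_n/\log n\to +\infty$.
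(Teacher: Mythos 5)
Your overall architecture coincides with the paper's proof: Wu's local martingale $M_t = W(X_t)\,\e^{-\int_0^t \frac{\Lc W}{W}(X_s)\,ds}$, the Feynman--Kac semigroup acting on $\Binfty_W(\X)$, its compactness and the Krein--Rutman theorem to identify $\lambda(f)$ with the principal eigenvalue (the $h$-transform giving simplicity and independence of the starting point~$x$), Kato perturbation theory for Gateau-differentiability, and the projective-limit G\"artner--Ellis theorem combined with exponential tightness in the $\tau^\kap$-topology. Your treatment of $\nu\not\ll\mu$ and $\nu\notin\mathcal{P}_\kap(\X)$ via (approximate) indicators of null sets, and the Borel--Cantelli argument for~\eqref{eq:almostsure}, also match the paper.

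There is, however, a genuine gap in your mechanism for exponential tightness, which is precisely the step on which the upgrade from the $\tau$- to the $\tau^\kap$-topology hinges. You propose to populate the sets $\{\nu:\nu(\kap\ind_{\{\kap\geq R\}})\leq\eta_R\}$ using the drift condition~\eqref{eq:Lyapunov_kappaW}, the bound $\E[\kap(X_s)W(X_s)]\leq \e^{cs}(\kap W)(x)$ and Markov's inequality. A first-moment bound at a single time~$s$ gives no control on exponential moments of the additive functional $\int_0^t\kap(X_s)\,ds$, which is what governs the event $\{L_t^{(x)}(\kap\ind_{\{\kap\geq R\}})>\eta_R\}$; moreover, any decay rate extracted this way is capped by the constant~$c$, whereas exponential tightness requires, for every~$N$, a $\tau^\kap$-precompact set missed with probability bounded by $C\e^{-Nt}$ with~$N$ arbitrarily large. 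The condition~\eqref{eq:Lyapunov_kappaW} is used in the paper for a different purpose (to make $(P_t)_{t\geq 0}$ a bounded semigroup on $\Binfty_{\kap W}(\X)$ and define the extended domain in Section~\ref{sec:Donsker}), not for tightness. The correct mechanism (Lemma~\ref{lem:tightness}) exploits the full strength of the Witten--Lyapunov structure: take $\Gamma_N=\{\nu:\nu(\Ps)\leq N\}$; since $\kap\ll\Ps$, the weight~$\kap$ is uniformly integrable over~$\Gamma_N$, which makes $\Gamma_N$ precompact for $\tau^\kap$ (Prokhorov plus a weighted convergence criterion); and Chebyshev together with the supermartingale property of~$M_t$ gives $\proba_x\big(L_t(\Ps)>N\big)\leq \e^{-Nt}\,\E_x\big[\e^{\int_0^t\Ps(X_s)\,ds}\big]\leq \e^{-Nt}W(x)$, i.e.\ an arbitrarily large rate. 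Your compact sets could be salvaged, but only by the same device (on $\{\kap\geq R\}$ one has $\kap\ind_{\{\kap\geq R\}}\leq\Ps/c_R$ with $c_R\to+\infty$ because $\kap\ll\Ps$), that is by routing the estimate through $\e^{\int_0^t\Ps}$ and~$M_t$ rather than through~\eqref{eq:Lyapunov_kappaW}. A smaller issue: your claim that uniqueness of the zero of~$I$ follows from ``strict convexity of $\lambda$ near the origin'' is not substantiated (it would require a nondegenerate asymptotic variance, which you do not establish); the paper instead uses goodness of~$I$, the LDP upper bound, Borel--Cantelli along the sequence~$t_n$, and the ergodicity of Proposition~\ref{prop:ergodicity} to identify the set of minimizers with~$\{\mu\}$. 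An alternative would be to differentiate $\theta\mapsto\lambda(\theta f)$ at $\theta=0$ and identify the derivative with $\mu(f)$, but that too has to be argued.
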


Our conclusion is in essence close to that of~\cite{kontoyiannis2005large}, but the conditions
to reach it seem more natural to us
and correspond to usual conditions for proving large deviations principles in an unbounded state
space, see~\cite{wu2001large,dupuis2018large} and~\cite[Section~9]{varadhan1984large}.
In particular, they allow to derive the duality representation~\eqref{eq:Ifenchel}, and
we do not need to consider non-linear operators. Our strategy (presented in
Section~\ref{sec:proofLDP}) relies on the Gärtner--Ellis
theorem~\cite{gartner1977large,ellis1984large,ellis2007entropy,dembo2010large}, for which
the existence of the free-energy~\eqref{eq:SCGF} is a key element. The originality
of our work is to make use of the local martingale~\eqref{eq:martingaleintro} introduced by
Wu~\cite{wu2001large} in order to solve the spectral problem associated with the
Feynman--Kac operator, which proves the existence of the limit in~\eqref{eq:SCGF}. This directly provides
the LDP in the $\tau^\kap$-topology by duality.
However, there may be cases in which a LDP holds although the conditions of the Gärtner--Ellis
theorem are not satisfied, for instance in the framework of the Sanov theorem~\cite{wang2010sanov},
so our conditions may not be necessary.

Let us also mention that, in addition to~\eqref{eq:almostsure}, we also show for completeness
in the proof of Theorem~\ref{theo:LDP} that~$(L_t^{(x)})_{t\geq 0}$ almost surely spends
a time of finite Lebesgue measure outside any $\tau^\kap$-open set around~$\mu$.

Another advantage of our approach is to characterize precisely the set of functions
for which a LDP holds from the standard condition on~$\Ps$ defined
in~\eqref{eq:lyapunov}, like in~\cite{donsker1975asymptoticI,varadhan1984large}. This condition
is also used in~\cite[Corollary~2.3]{wu2001large} for proving a level~1 LDP for Langevin
dynamics. We present
below a clear connection with a spectral gap condition for the Witten--Schrödinger operator in
the reversible case. The comparison with Cram\'{e}r's condition for independent
variables highlights the effect of correlations on fluctuations.

\begin{remark}[Reversible processes, Witten Laplacian and Cram\'{e}r's condition]
  \label{rem:cramer}
  Consider the following reversible diffusion
  \[
  dX_t = - \nabla V(X_t)\,dt +\sqrt{2}\,dB_t,
  \]
  where $V:\X\to \R $ is a smooth potential with compact level sets. The generator of
  this dynamics is $\Lc = - \nabla V\cdot \nabla + \Delta$ and its invariant probability measure
  reads~$\mu(dx)=Z^{-1}\,\e^{-V(x)}dx$, where we assume that
  \[
  Z= \intX \e^{-V(x)}\,dx<+\infty.
  \]
  Define
  \[
  W_{\theta}(x)=\e^{\theta V(x)},
  \]
  for some~$\theta\in(0,1)$. This is a standard choice for obtaining
  compactness of the evolution operator~\cite[Section~8]{bellet2006ergodic}, and optimal
  control representations of rate functions~\cite{dupuis2018large}, see also
  Proposition~\ref{prop:nonlin}. An easy computation shows that
  \begin{equation}
    \label{eq:revW}
  \Ps_\theta=-\frac{\Lc W_{\theta}}{W_{\theta}}= \theta ( 1 - \theta) |\nabla V|^2 - \theta\Delta V.
  \end{equation}
  However, we also know~\cite{witten1982supersymmetry} that the
  generator~$\Lc$ considered on~$L^2(\mu)$ is unitarily equivalent to the operator
  \[
  \widetilde{\Lc}:=\e^{-\frac{V}{2}} \Lc\big( \e^{\frac{V}{2}}\cdot\big),
  \]
  defined on~$L^2(dx)$ (a procedure also called
  symmetrization~\cite[Section~4.3]{touchette2018introduction}), which is actually the opposite of
  the Witten Laplacian~\cite{witten1982supersymmetry,helffer2006semi}:
  \begin{equation}
    \label{eq:schrodinger}
    \widetilde{\Lc} = \Delta - \frac{1}{4}|\nabla V|^2 +\frac{1}{2} \Delta V =
    -\left( -\Delta + \Psi_{\frac{1}{2}}\right).
  \end{equation}
  In this case, the condition for~\eqref{eq:revW} to have compact
  level sets when $\theta = 1/2$ is actually equivalent to a confinement condition (or spectral gap
  condition~\cite{helffer2013spectral}) for the Witten--Schrödinger operator~$\widetilde{\Lc}$
  defined in~\eqref{eq:schrodinger}. In that sense,
  Assumption~\ref{as:lyapunov} is a natural generalization of a spectral gap condition for the
  Witten Laplacian in the case of possibly non-reversible dynamics. This is why we call
  Assumption~\ref{as:lyapunov} a Witten--Lyapunov condition.

  We now compare this Witten--Lyapunov condition to Cram\'{e}r's exponential moment condition
  in the case of independent variables of law~$\mu$. Consider a smooth potential~$V(x)$ which behaves as~$|x|^q$
  for $q>1$ outside a ball~$B(0,r)$ centered on the origin. Assumption~\ref{as:lyapunov} is thus satisfied by application
  of Proposition~\ref{prop:nonlin}. The standard
  Cram\'{e}r condition in the case of independent variables~$(X_i)_{i\geq 0}$ states that the empirical measure
  \[
  \frac{1}{n}\sum_{i=1}^n \delta_{X_i}
  \]
  satisfies a large deviations principle in the~$\tau^\kap$-topology if and only
  if~\cite[Theorem~1.1]{wang2010sanov}:
  \[
  \forall\,\theta\in\R,\quad  \intX \e^{\theta \kap}d\mu<+\infty.
  \]
  For $\mu(dx)= Z^{-1} \e^{-V(x)}dx$, a
  sufficient condition for the above condition to hold is
  to choose a smooth function $\kap$ behaving as $1 + |x|^\alpha$ with
  $0\leq \alpha <q$. On the other hand, the Witten--Lyapunov potential~\eqref{eq:revW}
  reads in this case
  \[
  \forall x \in \X \backslash B(0,r), \qquad \Ps_{\theta} (x) = \theta(1 - \theta) q^2 |x|^{2(q-1)} - \theta q(q+d-2)|x|^{q-2},
  \]
  so that we may choose $\kap(x)$ behaving as $1 + |x|^{\alpha}$ for $0\leq \alpha < 2(q-1)$. When comparing the
  two conditions, we obtain the following different situations depending on~$q$:
  \begin{itemize}
  \item $q>2$ (super-Gaussian case): $2(q-1)>q$, the Witten--Lyapunov condition is
    \emph{less restrictive} than Cram\'{e}r's condition;
  \item $q=2$ (Gaussian case): $2(q-1)=q$, the two conditions are equivalent;
  \item $q\in(1,2)$ (sub-Gaussian case): $2(q-1) < q$, the Witten--Lyapunov condition
    is \emph{more restrictive} than Cram\'{e}r's condition.
  \end{itemize}
  This simple example shows that considering a correlated system
  instead of independent variables has a non-trivial effect on the stability of the
  system. Depending on the confinement potential, the Witten--Lyapunov condition
  for~\eqref{eq:revW} to have compact level sets can be more or less restrictive
  than Cram\'{e}r's condition for independent variables distributed according to the invariant
  measure~$\mu$. Finally, we remark that for $q\in(1,3/2)$, the process is \emph{heavy-tailed} in
  the sense that $2(q-1) < 1$ and the observable $f(x)=x$ (assuming $d=1$) does not satisfy a LDP.
  In other words, the average position of the process defined by
  \[
  \frac{1}{t}\int_0^t X_s\,ds
  \]
  cannot be shown to satisfy a large deviations principle at speed~$t$ with our arguments.

  We finally mention that, in the case where the observable~$f$ grows faster at infinity than the
  potential~$\Ps$, it seems possible to derive a level~1 large deviations principle at a speed
  smaller than~$t$. We refer to~\cite{nickelsen2018anomalous} for a recent account dealing with
  the case of an Ornstein--Uhlenbeck process, and to~\cite{bordenave2014large,augeri2017heavy}
  for related issues.
\end{remark}

We close this section with a practical corollary of Theorem~\ref{theo:LDP} which
generalizes the level~1 LDP proved in~\cite[Corollary~2.3]{wu2001large}.

\begin{corollary}[Level~1 large deviations principle]
  \label{cor:contract}
  Suppose that Assumptions~\ref{as:regularity},~\ref{as:control} and~\ref{as:lyapunov} hold
  true and consider a function~$f\in\Binfty_\kap(\X)$. Fix $x \in \X$. Then, the function
  \begin{equation}
    \label{eq:lambdaloc}
  \theta\in\R\mapsto  \lambda_f(\theta):= \lim_{t\to +\infty}\frac{1}{t}\log
  \E\left[ \e^{\theta \int_0^t f(X_s^{(x)})\,ds}\right]
  \end{equation}
  is well-defined and differentiable, and does not depend on~$x$.
  Moreover,~$L_t^{(x)}(f)$ satisfies a large deviations principle in~$\R$ at speed~$t$
  with good rate function given by
  \begin{equation}
    \label{eq:contract}
  \forall\,a\in \R,\quad I_f(a) = \inf\big\{ I(\nu), \ \nu\in\mathcal{P}(\X),\ \nu(f)=a \big\},
  \end{equation}
  where~$I$ is defined in~\eqref{eq:Ifenchel}. Finally, it holds
  \begin{equation}
    \label{eq:Ifenchelloc}
  I_f(a) = \sup_{\theta \in \R}\ \big\{\theta a - \lambda_f(\theta )\big\}.
  \end{equation}
\end{corollary}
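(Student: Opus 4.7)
The plan is to derive Corollary~\ref{cor:contract} from Theorem~\ref{theo:LDP} via the contraction principle combined with Fenchel--Legendre duality on $\R$. The main nontrivial ingredient will be the differentiability of $\lambda_f$.

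First, since $f\in\Binfty_\kap(\X)$, every tilt $\theta f$ with $\theta\in\R$ also lies in $\Binfty_\kap(\X)$, and Theorem~\ref{theo:LDP} directly gives that $\lambda_f(\theta)=\lambda(\theta f)$ is well-defined, finite, independent of~$x$, and convex in $\theta$ (inherited from the convexity of~$\lambda$ on $\Binfty_\kap$).

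Second, I would obtain the LDP for $L_t^{(x)}(f)$ by a contraction argument applied to the map $\Phi:\nu\mapsto\nu(f)$. For $\nu\in\mathcal{P}_\kap(\X)$ and $f\in\Binfty_\kap(\X)$, the bound $|\nu(f)|\le\|f\|_{\Binfty_\kap}\nu(\kap)<+\infty$ ensures that $\Phi$ is well-defined on $\mathcal{P}_\kap(\X)$, and it is continuous from $(\mathcal{P}_\kap(\X),\tau^\kap)$ to $\R$ by the very definition of the $\tau^\kap$-topology. Since $I$ is $+\infty$ outside $\mathcal{P}_\kap(\X)$ and is a good rate function, the classical contraction principle (see \textit{e.g.}~\cite{dembo2010large}) yields the LDP at speed~$t$ for $L_t^{(x)}(f)=\Phi(L_t^{(x)})$, with the good rate function~\eqref{eq:contract}.

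Third, I would obtain the Legendre representation~\eqref{eq:Ifenchelloc} from the dual identity $\lambda(g)=\sup_\nu\{\nu(g)-I(\nu)\}$ (applied to $g=\theta f$), which follows from~\eqref{eq:Ifenchel} by Fenchel biduality for the convex lower semicontinuous pair $(I,\lambda)$ (and which is already recorded in the introduction). Writing
\[
\lambda_f(\theta)=\sup_{\nu\in\mathcal{P}(\X)}\{\theta\nu(f)-I(\nu)\} = \sup_{a\in\R}\left\{\theta a - \inf_{\nu:\,\nu(f)=a}I(\nu)\right\} = \sup_{a\in\R}\{\theta a - I_f(a)\}
\]
identifies $\lambda_f = I_f^*$. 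Since $I_f$ is convex (as the projection of the convex function $I$ along the linear constraint $\nu(f)=a$) and lower semicontinuous (as a good rate function on $\R$), Fenchel biduality on $\R$ then yields $I_f=\lambda_f^*$, which is precisely~\eqref{eq:Ifenchelloc}.

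The step I expect to be the main obstacle is the differentiability of $\lambda_f$: convexity on $\R$ a priori only guarantees differentiability outside an at most countable set of points. To obtain differentiability everywhere, I would invoke analytic perturbation theory applied to the Feynman--Kac operators $\{P_t^{\theta f}\}_{\theta\in\R}$. As recalled in the introduction and established using the techniques of~\cite{ferre2018more}, $P_t^{\theta f}$ is compact on $\Binfty_W(\X)$ with simple isolated top eigenvalue $\mathrm{e}^{t\lambda_f(\theta)}$ by the Krein--Rutman theorem. Since the family $\theta\mapsto P_t^{\theta f}$ is real-analytic in operator norm, a standard Kato-type perturbation argument on the isolated simple eigenvalue yields real analyticity of $\theta\mapsto\lambda_f(\theta)$, hence differentiability.
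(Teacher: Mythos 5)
Your overall architecture matches the paper's: the LDP for $L_t^{(x)}(f)$ with rate function~\eqref{eq:contract} is obtained exactly as in the paper, by applying the contraction principle to the map $\nu\mapsto\nu(f)$, which is continuous from $(\mathcal{P}_\kap(\X),\tau^\kap)$ to~$\R$; and your differentiability argument for~$\lambda_f$ (Kato-type perturbation of the simple isolated principal eigenvalue of the compact operators $P_t^{\theta f}$ on $\Binfty_W(\X)$) is precisely the mechanism of Lemma~\ref{lem:miam}, which is what the paper means by ``redoing the proofs leading to Theorem~\ref{theo:LDP}''. Where you diverge is the identification of~\eqref{eq:Ifenchelloc} with~\eqref{eq:contract}: the paper applies the G\"artner--Ellis theorem \emph{in $\R$} to the finite, differentiable function~$\lambda_f$, obtains an LDP with good rate function $\lambda_f^*$, and concludes by uniqueness of the rate function; you instead pass through the dual identity $\lambda(\theta f)=\sup_\nu\{\theta\nu(f)-I(\nu)\}$ and a one-dimensional Fenchel biduality for~$I_f$ (your convexity and lower semicontinuity claims for $I_f$ are fine).

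The weak point is that this dual identity is not a formal consequence of the definition~\eqref{eq:Ifenchel}. From $I=\lambda^*$ one only gets $I^*=\lambda^{**}\leq\lambda$; equality requires $\lambda$ to be lower semicontinuous on $\Binfty_\kap(\X)$, and convexity, finiteness and even Gateau-differentiability do not imply this in an infinite-dimensional Banach space (discontinuous linear functionals are convex and Gateau-differentiable everywhere). In the paper this is exactly the content of Corollary~\ref{cor:variational}, whose proof (Section~\ref{sec:proofcor}) establishes continuity of~$\lambda$ via operator-norm estimates and the spectral stability theory of~\cite{chatelin1981spectral}; invoking it is logically admissible (it does not use Corollary~\ref{cor:contract}), but it is a genuinely nontrivial input that your proposal asserts without justification. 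The gap can be closed in three ways: cite (or reprove) Corollary~\ref{cor:variational}; apply Varadhan's lemma with the exponential moment condition (satisfied since $\lambda(\gamma\theta f)<+\infty$ for all $\gamma$) to the contracted LDP in~$\R$ to get $\lambda_f=I_f^*$ directly; or, most economically, follow the paper and deduce~\eqref{eq:Ifenchelloc} from the G\"artner--Ellis theorem in~$\R$ together with uniqueness of rate functions, which uses only the differentiability you have already established.
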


Corollary~\ref{cor:contract} is useful for practical applications, since~\eqref{eq:Ifenchelloc}
is a natural way to estimate the rate function~$I_f$ associated with an observable~$f$,
see for instance~\cite{giardina2006direct,rousset2006control,tailleur2008simulation,chetrite2015nonequilibrium,ferre2018adaptive}.

\begin{proof}
  For $f\in\Binfty_\kap(\X)$, the application $L_t^{(x)}\in \mathcal{P}_\kap(\X) \mapsto L_t^{(x)}(f) \in\R$
  is continuous in the $\tau^\kap$-topology~\cite[Lemma~3.3.8]{deuschel2001large}.
  Therefore,~$L_t^{(x)}(f)$ obeys a large deviations
  principle in~$\R$ by the contraction principle~\cite[Theorem~4.2.1]{dembo2010large},
  with good rate function given by~\eqref{eq:contract}. Moreover, one can redo the
  proofs leading to Theorem~\ref{theo:LDP} and show that~$\lambda_f$ defined
  in~\eqref{eq:lambdaloc} is smooth and well-defined on~$\R$. This implies that a LDP with good rate
  function~\eqref{eq:Ifenchelloc} holds through the Gärtner--Ellis theorem applied in~$\R$. Since
  the rate function is unique, the expressions~\eqref{eq:contract} and~\eqref{eq:Ifenchelloc}
  coincide.
\end{proof}

\section{Decomposition of the rate function}
\label{sec:Donsker}

Our goal in this section is to rewrite~$I$ in various ways, which is useful for theoretical
understanding and practical purposes. In Section~\ref{sec:DV}, we first show
an extension of the standard Donsker--Varadhan formulation for~$I$. This result is 
obtained by making use of the spectral analysis of the operator~$P_t^f$
for $f\in\Binfty_\kap(\X)$, which is presented in Section~\ref{sec:proofLDP}. We then apply this
result to obtain a variational representation for the principal eigenvalue~$\e^{t \lambda(f)}$ of~$P_t^f$.
Next, in Section~\ref{sec:irrever}, we split the expression
of the rate function according to the symmetric and antisymmetric parts of the dynamics,
extending the work~\cite{bodineau2008large} to general diffusions.  Such a
decomposition will prove useful in Section~\ref{sec:applications} to compare the entropy of
overdamped and underdamped Langevin dynamics. Most of the proofs of this section are
postponed to Section~\ref{sec:proofDonsker}.

\subsection{Donsker--Varadhan variational formula}
\label{sec:DV}

We start with the variational representation of the entropy.
Our proof, which can be found in Section~\ref{sec:IDV}, is an adaptation
of~\cite[Lemma 4.2.35]{deuschel2001large} relying on the Feynman--Kac semigroup and its spectral
elements. In order to state the result, we need to make sense of~$\Lc u$ for functions $u \in B^\infty_W(\X)$.
It turns out that the appropriate notion to this end
is the extended domain~$\mathcal{D}(\Lc)$ of the generator~$\Lc$ considered as an operator
on~$B^\infty_{W}(\X)$, defined in the following way: a function 
$\varphi \in\Binfty_W(\X)$ belongs to~$\mathcal{D}(\Lc)$ if and only if there
exists a measurable function $\phi:\X\to \R$ such that, for any $x\in\X$,
\begin{equation}
  \label{eq:integrability}
\int_0^t P_s |\phi|(x)\,ds < +\infty,
\end{equation}
and
\begin{equation}
  \label{eq:Dextended}
  P_t\varphi = \varphi +\int_0^t P_s \phi\,ds.
\end{equation}
In this case we write $\phi = \Lc \varphi$ (with some abuse of notation in view of 
the definition of~$\Lc$ as a differential operator in~\eqref{eq:gen}, but of course the
expressions coincide when~$\varphi$ is a smooth test function with compact support).

When the~$\tau$-topology is considered, such extended domains were already considered
for instance in~\cite{Wu00,wu2001large,kontoyiannis2005large}, see
also~\cite[Chapter~I, Definition~14.15]{davisbook}. For the unbounded functions we consider, one should
think of~$\phi = \Lc u$ as an element of~$\Binfty_{\kap W}(\X)$ (see the proof of
Lemma~\ref{lem:pcpal_eig_L_f} below, as well as the comments following Proposition~\ref{prop:IDV}). The integrability
condition~\eqref{eq:integrability} is reasonable in this context since~$(P_t)_{t\geq 0}$ is a well defined
semigroup on~$B^\infty_{\kap W}(\X)$ in view of the Lyapunov condition~\eqref{eq:Lyapunov_kappaW}.

We can now present the main result of this section.
\begin{proposition}
\label{prop:IDV}
  The rate function defined in~\eqref{eq:Ifenchel} admits the following representation:
  \begin{equation}
    \label{eq:IDV}
    \forall\, \nu\in \mathcal{P}(\X),\quad
    I(\nu) = \sup \left\{ - \intX \frac{ \Lc u}{u} \, d\nu, \ u\in\mathcal{D}^+(\Lc)\right\}, 
  \end{equation}
  where
  \begin{equation}
    \label{eq:Dplus}
    \mathcal{D}^+(\Lc) = \Big\{u\in  \mathcal{D}(\Lc)\cap C^0(\X) \ \Big|\ 
     u > 0,\, -\frac{\Lc u}{u}\in\Binfty_\kap(\X) \Big\}.
  \end{equation}
  In particular, the functional
  defined in~\eqref{eq:IDV} is equal to~$+\infty$ if~$\nu\notin\mathcal{P}_\kap(\X)$
  or~$\nu$ is not absolutely continuous with respect to~$\mu$.
\end{proposition}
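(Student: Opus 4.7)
The plan is to prove~\eqref{eq:IDV} by establishing both inequalities between $I(\nu)$ and $J(\nu):=\sup\left\{-\intX \Lc u/u\,d\nu : u\in\mathcal{D}^+(\Lc)\right\}$, leveraging the spectral analysis of the Feynman--Kac semigroup $(P_t^f)_{t\geq 0}$ developed for Theorem~\ref{theo:LDP}. That analysis provides, for each $f \in \Binfty_\kap(\X)$, a simple principal eigenvalue $\e^{t\lambda(f)}$ with a continuous strictly positive eigenfunction $h_f \in \Binfty_W(\X)$ satisfying $(\Lc+f)h_f = \lambda(f) h_f$, together with a positive left eigenmeasure $\mu_f$ of finite $W$-moment.

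For the upper bound $J(\nu)\leq I(\nu)$, I would fix $u \in \mathcal{D}^+(\Lc)$ and set $f := -\Lc u/u \in \Binfty_\kap(\X)$. Applying It\^o's formula via the extended-domain identity~\eqref{eq:Dextended} shows that $M_t := u(X_t)\exp\bigl(\int_0^t f(X_s)\,ds\bigr)$ is a nonnegative local martingale, hence a supermartingale, yielding the pointwise bound $P_t^f u \leq u$. Pairing with the left eigenmeasure gives $\e^{t\lambda(f)}\mu_f(u) = \mu_f(P_t^f u) \leq \mu_f(u)$, and since $u>0$ is continuous, $\mu_f$ has a positive density by hypoellipticity, and $u\in\Binfty_W$ together with $\mu_f(W)<+\infty$ ensures $0<\mu_f(u)<+\infty$, we conclude $\lambda(f)\leq 0$. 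Therefore $-\intX \Lc u/u\,d\nu = \nu(f) \leq \nu(f)-\lambda(f) \leq I(\nu)$, and taking the supremum over $u$ yields $J(\nu)\leq I(\nu)$.

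For the lower bound on measures $\nu\in\mathcal{P}_\kap(\X)$ with $\nu\ll\mu$, I would use the principal eigenfunction $h_f$ associated with an arbitrary $f\in\Binfty_\kap(\X)$ as the test function. Hypoellipticity makes $h_f$ smooth and strictly positive; the Feynman--Kac identity $P_t^f h_f = \e^{t\lambda(f)} h_f$ combined with a Duhamel computation produces~\eqref{eq:Dextended} for $h_f$ with $\Lc h_f = (\lambda(f)-f)h_f$, the required integrability~\eqref{eq:integrability} being ensured by~\eqref{eq:Lyapunov_kappaW}. Since $-\Lc h_f/h_f = f-\lambda(f) \in \Binfty_\kap(\X)$, we get $h_f \in \mathcal{D}^+(\Lc)$ and
\[
-\intX \frac{\Lc h_f}{h_f}\,d\nu = \nu(f)-\lambda(f).
\]
Taking the supremum over $f$ gives $J(\nu)\geq I(\nu)$. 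For $\nu$ with $\nu(\kap)=+\infty$, testing with $h_{c\kap}$ and sending $c\to+\infty$ forces $J(\nu)=+\infty$; for $\nu$ having a nontrivial singular component relative to $\mu$, a classical Donsker--Varadhan construction of test functions concentrating on the singular support yields the same conclusion.

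The main obstacle will be making the upper-bound step fully rigorous: one must justify that $M_t$ is a genuine supermartingale (ruling out explosion via the Lyapunov function $W$), legitimize the pairing $\mu_f(P_t^f u) \leq \mu_f(u)$ with the correct integrability, and extract the regularity and positivity properties of $\mu_f$ and $h_f$ from the spectral theory of~\cite{ferre2018more}. Once those ingredients are secured, the lower-bound step follows cleanly from the eigenvalue equation, and the treatment of the $+\infty$ cases reduces to technical but standard constructions.
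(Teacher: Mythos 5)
Your two-sided strategy is essentially the paper's: the lower bound via the principal eigenfunctions $h_f$ (which Lemma~\ref{lem:pcpal_eig_L_f} places in $\mathcal{D}^+(\Lc)$ with $\Lc h_f=(\lambda(f)-f)h_f$), and the upper bound via the supermartingale bound $P_t^{f_u}u\le u$ for $f_u=-\Lc u/u$ followed by a positivity argument giving $\lambda(f_u)\le 0$. The one real divergence is how that last inequality is extracted: you pair $P_t^{f_u}u\le u$ with a left eigenmeasure $\mu_{f_u}$, whereas the paper iterates the $h$-transform $Q_{h_{f_u}}$ and uses its ergodicity~\eqref{eq:hergodic} to get $\e^{-nt\lambda(f_u)}\,u/h_{f_u}\ge Q_{h_{f_u}}^n(u/h_{f_u})\to\mu_{h_{f_u}}(u/h_{f_u})>0$. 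Your route is legitimate, but note that Lemma~\ref{lem:specLf} does not hand you the left eigenmeasure directly; you would obtain it from the same $h$-transform data, namely $\mu_{f_u}:=h_{f_u}^{-1}\,d\mu_{h_{f_u}}$, whose finite $W$-moment follows from $\mu_{h_{f_u}}(W/h_{f_u})<+\infty$, so that $0<\mu_{f_u}(u)<+\infty$ and the pairing is licit. Up to this identification the two arguments are the same computation.

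Two points need tightening. First, for $u\in\mathcal{D}^+(\Lc)$ you cannot literally apply It\^o's formula, since $u$ is only continuous; but the extended-domain identity~\eqref{eq:Dextended} plus the Markov property shows $u(X_t)-\int_0^t(\Lc u)(X_s)\,ds$ is a martingale, and an integration by parts against the finite-variation exponential then yields the local martingale (hence supermartingale) property of $M_t$ — this is the argument the paper imports from Wu, and it is what your step should say. Second, your treatment of $\nu\not\ll\mu$ by ``a classical Donsker--Varadhan construction'' is too vague to count as a proof; the concrete route (used in the paper) is to take $A$ with $\mu(A)=0$, $\nu(A)>0$, set $f_a=a\ind_A$, use that $\lambda(f_a)=0$ because the process has a smooth transition density and hence never charges Lebesgue-null sets (this is established in the proof of Theorem~\ref{theo:LDP}), and then test with $h_{f_a}\in\mathcal{D}^+(\Lc)$, for which $-\Lc h_{f_a}/h_{f_a}=a\ind_A$, giving $J(\nu)\ge a\,\nu(A)\to+\infty$; note in passing that $h_f$ is only continuous and positive (not smooth, since $f$ is merely measurable), which is all that is needed. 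With these repairs your proof coincides with the paper's.
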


This result is standard when~$\X$ is compact~\cite{donsker1975variational}, but
does not seem to be known for an unbounded space~$\X$ and for the~$\tau^\kap$-topology we consider.
In this situation the space~$\mathcal{D}^+(\Lc)$  has to be designed with some caution. Note that~$\mathcal{D}^+(\Lc)$
is not empty since it contains the functions of the
form $u=\e^{\psi}$ for $\psi\in\Cinfc(\X)$. Note also
that the last statement of Proposition~\ref{prop:IDV} is consistent with the Fenchel
definition~\eqref{eq:Ifenchel} of the rate function. In order to get some intuition on
the formula~\eqref{eq:IDV}, let us mention that the proof formally relies on replacing the maximum
over functions~$u\in\mathcal{D}^+(\Lc)$ by the supremum over eigenfunctions~$h_f$ satisfying
\[
(\Lc + f)h_f = \lambda(f)h_f,
\]
for $f\in\Binfty_\kap(\X)$. The above equation rewrites, since~$h_f>0$ (see Lemmas~\ref{lem:specLf} and~\ref{lem:pcpal_eig_L_f}),
\[
-\frac{\Lc h_f}{h_f} = f - \lambda(f).
\]
By integrating with respect to a measure~$\nu\in\mathcal{P}_\kap(\X)$ we find~\eqref{eq:IDV} on
the left hand side, and the Fenchel transform~\eqref{eq:Ifenchel} on the right hand side.
The functional spaces associated with~$f$ and~$h_f$ motivate the choice of~$\mathcal{D}^+(\Lc)$,
in particular the fact that $\Lc h_f = \lambda(f)h_f - fh_f \in B^\infty_{\kap W}(\X)$  (as the sum of an element
in~$B^\infty_W(\X)$ and the product of a function in~$B^\infty_W(\X)$ and another one in~$B^\infty_\kap(\X)$),
which allows to define~$\Lc h_f$ in the weak sense~\eqref{eq:Dextended}.


A natural consequence of Proposition~\ref{prop:IDV} is the following variational representation
for the cumulant function. The proof, postponed to Section~\ref{sec:proofcor}, relies
on the convexity of the cumulant function to invert the Fenchel transform~\eqref{eq:Ifenchel}.
\begin{corollary}
  \label{cor:variational}
  Suppose that Assumptions~\ref{as:regularity}, \ref{as:control} and~\ref{as:lyapunov}
  hold true, and consider
  $f\in\Binfty_\kap(\X)$. Then, 
  \begin{equation}
    \label{eq:variational}
  \lambda(f) = \sup_{\nu\in\mathcal{P}_\kap}\big\{ \nu(f) - I(\nu)\big\},
  \end{equation}
  where~$I$ is defined in~\eqref{eq:IDV}.
\end{corollary}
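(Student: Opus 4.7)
The identity to establish is the Fenchel--Moreau biduality between $\lambda$ and $I$; the natural tool is Varadhan's lemma applied to the large deviations principle of Theorem~\ref{theo:LDP}. The easy inequality is purely definitional: for every $\nu \in \mathcal{P}_\kap(\X)$ with $\nu \ll \mu$ and every $g \in \Binfty_\kap(\X)$, the Fenchel formula~\eqref{eq:Ifenchel} gives $I(\nu) \geq \nu(g) - \lambda(g)$. Specializing to $g = f$ and rearranging yields $\nu(f) - I(\nu) \leq \lambda(f)$; since $I(\nu) = +\infty$ whenever $\nu \notin \mathcal{P}_\kap(\X)$ or $\nu \not\ll \mu$, taking the supremum over $\nu \in \mathcal{P}_\kap(\X)$ produces $\sup_{\nu} \{\nu(f) - I(\nu)\} \leq \lambda(f)$.

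For the reverse inequality, I would apply a weighted version of Varadhan's lemma. Three ingredients are needed: (i) the $\tau^\kap$-LDP for the empirical measures $(L_t^{(x)})_{t \geq 0}$ with good rate function $I$, provided by Theorem~\ref{theo:LDP}; (ii) the $\tau^\kap$-continuity of the linear functional $\nu \mapsto \nu(f)$ on $\mathcal{P}_\kap(\X)$, which is immediate from $f \in \Binfty_\kap(\X)$; and (iii) the exponential tail control
\[
\lim_{M \to \infty}\, \limsup_{t \to \infty}\, \frac{1}{t}\log \E\!\left[\e^{\int_0^t f(X_s^{(x)})\,ds}\, \ind_{L_t^{(x)}(f) \geq M}\right] = -\infty,
\]
which follows from the moment bound $\limsup_{t\to\infty} t^{-1}\log \E[\e^{\gamma \int_0^t f(X_s^{(x)})\,ds}] = \lambda(\gamma f) < +\infty$ for some $\gamma > 1$ (such a $\gamma$ is legitimate because $\gamma f \in \Binfty_\kap(\X)$ and Theorem~\ref{theo:LDP} guarantees $\lambda$ is finite on $\Binfty_\kap(\X)$); the passage from the moment bound to the tail control is a one-line Hölder estimate. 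Varadhan's lemma then produces
\[
\lim_{t \to \infty}\,\frac{1}{t}\log \E\!\left[\e^{\int_0^t f(X_s^{(x)})\,ds}\right] = \sup_{\nu \in \mathcal{P}(\X)}\{\nu(f) - I(\nu)\},
\]
and the left-hand side equals $\lambda(f)$ by the definition~\eqref{eq:SCGF}, which closes the equality.

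The only step demanding genuine care is justifying Varadhan's lemma in the $\tau^\kap$-topology, where the test functional $\nu\mapsto\nu(f)$ is only $\Binfty_\kap$-continuous rather than bounded continuous. Standard references state the lemma either in the weak or in the $\tau$-topology for bounded test functionals, so one must either invoke a weighted version adapted to $\kap$-controlled observables (available e.g.\ in~\cite{wu2001large}) or retrace the classical proof, using the exponential moment bound above to justify the truncation at level $M$ and to dispose of the resulting remainder uniformly in $t$. The convexity of $\lambda$ advertised before the statement enters implicitly here through the Hölder step; everything else is formal manipulation of the Fenchel definition of $I$.
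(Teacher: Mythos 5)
Your argument is correct in outline, but it follows a genuinely different route from the paper. The paper treats~\eqref{eq:variational} as a pure convex-duality statement: since, by Proposition~\ref{prop:IDV}, the functional~$I$ of~\eqref{eq:IDV} coincides with the Fenchel transform~\eqref{eq:Ifenchel} of~$\lambda$, the identity amounts to bi-Fenchel stability of~$\lambda$ on~$\Binfty_\kap(\X)$; as~$\lambda$ is convex and finite, it suffices (by~\cite[Theorem~2.22]{barbu2012convexity}) that~$\lambda$ be lower semicontinuous, and the paper proves it is in fact norm-continuous by showing $\|P_t^{f}-P_t^{f_n}\|_{\mathcal{B}(\Binfty_W)}\leq C\|f-f_n\|_{\Binfty_\kap}$ via the martingale estimates behind Lemma~\ref{lem:diffbound}, and then invoking the spectral stability result of~\cite{chatelin1981spectral} for the isolated principal eigenvalue of Lemma~\ref{lem:specLf}. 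You instead obtain the identity probabilistically, through Varadhan's integral lemma applied to the $\tau^\kap$-LDP of Theorem~\ref{theo:LDP} with the continuous affine functional $\nu\mapsto\nu(f)$, the moment condition being supplied by the finiteness of $\lambda(\gamma f)$ for $\gamma>1$ (and your preliminary ``easy'' inequality is then redundant, since Varadhan delivers the equality in one stroke). Your plan is sound, and you correctly flag the one point requiring care: standard statements of Varadhan's lemma (e.g.~\cite[Theorem~4.3.1]{dembo2010large}) are for regular topological spaces, so one must check that the $\tau^\kap$-topology qualifies (it does, being a Hausdorff weak topology generated by the functionals $\nu\mapsto\nu(g)$, $g\in\Binfty_\kap(\X)$) and that the events entering the proof are measurable (they are, since basic $\tau^\kap$-open sets pull back to conditions on the measurable random variables $L_t^{(x)}(g)$, and exponential tightness is available from Lemma~\ref{lem:tightness}). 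What each approach buys: the paper's proof avoids any discussion of Varadhan in a non-metrizable weighted topology and yields, as a by-product, the continuity of~$\lambda$ on~$\Binfty_\kap(\X)$; yours avoids spectral perturbation theory altogether and works from the LDP alone, and could even be shortened by combining the level-1 statement of Corollary~\ref{cor:contract} with one-dimensional Legendre duality at $\theta=1$, which is essentially your argument contracted to the real line.
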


  Corollary~\ref{cor:variational} may seem anecdotal, but it provides a variational
  representation for the principal eigenvalue of non-symmetric diffusion operators, as
  pioneered by Donsker and Varadhan in their seminal paper~\cite{donsker1975variational} for a
  compact space~$\X$. To the best of our knowledge, this formula had not been shown in an unbounded
  setting, for which we need to introduce the ``generalized domain''~$\mathcal{D}^+(\Lc)$ defined
  in~\eqref{eq:Dplus}. However, our set of assumptions implies that~$\lambda(f)$ can be thought of as
  the largest eigenvalue of $\Lc+f$, and turns out to be isolated for any~$f$
  (because of the compactness of the resolvent provided by Lemma~\ref{lem:specLf}),
  whereas in~\cite{donsker1975variational},~\eqref{eq:variational} may be the supremum
  of the essential spectrum of the operator. This suggests that~\eqref{eq:variational} holds
  under weaker assumptions. A possible approach for generalizing our results may be to consider
  different methods for studying the long time behaviour of unnormalized semigroups,
  see for instance~\cite{champagnat2017general,bansaye2019non,champagnat2019practical},
  or to resort to more subtle spectral analysis
  tools~\cite{wu2000deviation,wu2004essential,gao2014bernstein,birrell2018uncertainty}.

\subsection{Entropy decomposition: symmetry and antisymmetry}
\label{sec:irrever}
Our goal is now to provide refined expressions for the rate function~$I$ in terms of
symmetric and antisymmetric parts of the dynamics, inspired in particular
by~\cite{bodineau2008large}. In the following, for any closed operator~$T$, we denote
by~$T^*$ its adjoint on~$L^2(\mu)$, where~$\mu$ is the invariant probability measure of the process,
as obtained in Proposition~\ref{prop:ergodicity}. Considering the generator~$\Lc$ of the
diffusion~\eqref{eq:SDE}, we can always decompose it into symmetric and antisymmetric parts
with respect to~$\mu$ through
\begin{equation}
  \label{eq:LSLA}
\Lc = \LS + \LA,\quad \LS = \frac{ \Lc + \Lc^*}{2}, \quad \LA = \frac{ \Lc - \Lc^*}{2}.
\end{equation}
It is important to note that~$\LA$ is a first order differential operator (and therefore
obeys the chain rule of first order differentiation). We assume here that the operators $\Lc,\LA,\LS$ admit $\Cinfc(\X)$ as a
common core (but the domains of these operators may be different).

The decomposition~\eqref{eq:LSLA} allows to separate the rate function~\eqref{eq:IDV} into two
parts. This is the purpose of the next key result, whose proof can be found in
Section~\ref{sec:Idecomp}. It is inspired by the computations
in~\cite[Proposition 2]{bodineau2008large}, which we simplify and generalize here through a
variational Witten transform and the use of the Sobolev spaces introduced in
Section~\ref{sec:setting}. The algebra of the proof also suggests to consider~$I(\nu)$ for
probability measures~$\nu$ of the form~$d\nu=\e^v\,d\mu$.

\begin{theorem}
  \label{theo:InormH}
  Suppose that Assumptions~\ref{as:regularity},~\ref{as:control} and~\ref{as:lyapunov} hold true,
  consider  a measure $\nu\in\mathcal{P}_\kap(\X)$ such that~$d\nu =\e^v\,d\mu$ with $v\in\H^{1}(\nu)$ and $\LA v \in\H^{-1}(\nu)$.
  Then, the rate function~$I$ defined
  in~\eqref{eq:IDV} admits the following decomposition:
  \begin{equation}
    \label{eq:Idecomp}
    I(\nu) = \IS(\nu) + \IA(\nu),
  \end{equation}
  where
  \begin{equation}
    \label{eq:Isym}
  \IS(\nu) =\frac{1}{4} |v|_{\H^1(\nu)}^2
  \end{equation}
  and
  \begin{equation}
    \label{eq:Iantisym}
  \IA(\nu) = \frac{1}{4} \big|\LA v\big|_{\H^{-1}(\nu)}^2.
  \end{equation}
\end{theorem}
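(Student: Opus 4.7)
My plan is to start from the Donsker--Varadhan representation of Proposition~\ref{prop:IDV} and perform a ``variational Witten transform'' on the argument $u \in \mathcal{D}^+(\Lc)$ by writing
\[
u = \exp\!\left(\frac{v}{2} + g\right),
\]
where $g$ will play the role of an independent test function (ultimately ranging over a dense class such as $\Cinfc(\X)$). Since $u$ is positive and smooth in the desired regime, one has the pointwise identity $\Lc u/u = \Lc \phi + \mathscr{C}(\phi,\phi)$ for $\phi = \log u$, which with $\phi = v/2 + g$ yields
\[
-\intX \frac{\Lc u}{u}\,d\nu = -\intX \Lc\!\left(\frac{v}{2}+g\right) d\nu - \intX \mathscr{C}\!\left(\frac{v}{2}+g,\frac{v}{2}+g\right) d\nu.
\]
The core of the proof is then an algebraic rearrangement of this expression.

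\textbf{Integration by parts and cancellation.} I would next process the four nontrivial terms. Using the invariance of $\mu$ on $\Lc(\e^v)$ (which, through the chain rule $\Lc\e^v = \e^v(\Lc v + \mathscr{C}(v,v))$, gives $\intX \Lc v\,d\nu = -|v|_{\H^1(\nu)}^2$), the purely $v$-terms combine as
\[
-\intX \frac{\Lc v}{2}\,d\nu - \frac{1}{4}\intX \mathscr{C}(v,v)\,d\nu = \frac{1}{4}|v|_{\H^1(\nu)}^2.
\]
For the cross/$g$-terms I split $\Lc = \LS + \LA$. Self-adjointness of $\LS$ on $L^2(\mu)$, together with $\mathscr{C}(g,\e^v) = \e^v\mathscr{C}(g,v)$, gives $-\intX \LS g\,d\nu = \intX \mathscr{C}(v,g)\,d\nu$, which exactly cancels the cross term $-\intX \mathscr{C}(v,g)\,d\nu$ coming from $\mathscr{C}(\phi,\phi)$. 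Antisymmetry of $\LA$ on $L^2(\mu)$ together with its derivation property ($\LA \e^v = \e^v \LA v$) yields $-\intX \LA g\,d\nu = \intX g\,\LA v\,d\nu$. Collecting everything and using $\intX \mathscr{C}(g,g)\,d\nu = |g|_{\H^1(\nu)}^2$, one is left with
\[
-\intX \frac{\Lc u}{u}\,d\nu = \frac{1}{4}|v|_{\H^1(\nu)}^2 + \intX g\,\LA v\,d\nu - |g|_{\H^1(\nu)}^2.
\]

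\textbf{Optimization.} Taking the supremum over the admissible $g$ leaves the first term untouched, and the remaining pieces are optimized by completing the square: substituting $\psi = 2g$ matches the definition of the $\H^{-1}(\nu)$ seminorm, giving
\[
\sup_g \left\{\intX g\,\LA v\,d\nu - |g|_{\H^1(\nu)}^2\right\} = \frac{1}{4}\sup_\psi \left\{2\intX \psi\,\LA v\,d\nu - |\psi|_{\H^1(\nu)}^2\right\} = \frac{1}{4}|\LA v|_{\H^{-1}(\nu)}^2,
\]
which combines with the symmetric part to produce the announced decomposition.

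\textbf{Main obstacle.} The algebra above is clean, but the genuine work lies in justifying that supremizing over $u = \e^{v/2+g}$ with $g \in \Cinfc(\X)$ reproduces the full Donsker--Varadhan supremum~\eqref{eq:IDV}. For the lower bound $I(\nu) \geq \IS(\nu)+\IA(\nu)$, one must verify that such $u$ actually lie in $\mathcal{D}^+(\Lc)$, which is delicate since $v$ is only assumed to be in $\H^1(\nu)$: an approximation procedure truncating $v$ into smooth compactly supported $v_n$ (so that $\e^{v_n/2+g}$ is smooth and bounded, hence admissible) is needed, together with passage to the limit in each integral using the hypotheses $\nu \in \mathcal{P}_\kap(\X)$, $v \in \H^1(\nu)$ and $\LA v \in \H^{-1}(\nu)$. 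For the matching upper bound, every $u \in \mathcal{D}^+(\Lc)$ must be recast as $\e^{v/2+g}$ (formally $g = \log u - v/2$) and approximated by smooth compactly supported $g$'s; here the key regularity input is density of $\Cinfc(\X)$ in $\H^1(\nu)$, together with control of the $\LA$-term via the dual $\H^{-1}(\nu)$ norm. Ensuring that boundary terms in all the integration-by-parts identities truly vanish (in particular $\intX \Lc(\e^{v_n})\,d\mu = 0$) under the sole Witten--Lyapunov framework of Assumption~\ref{as:lyapunov} is, I expect, the main technical hurdle of the argument.
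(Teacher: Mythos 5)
Your proposal is correct and follows essentially the same route as the paper: the variational Witten transform $u=\e^{v/2+g}$ inserted into the representation of Proposition~\ref{prop:IDV}, the split $\Lc=\LS+\LA$ with the Dirichlet-form integration by parts and the cancellation of the cross term, and the completion of the square identifying $\sup_g\{\int_\X g\,\LA v\,d\nu-|g|^2_{\H^1(\nu)}\}$ with $\tfrac14|\LA v|^2_{\H^{-1}(\nu)}$ coincide, after the substitution $\psi=2g$, with the paper's computation leading to~\eqref{eq:Ludecomp2}. The "main obstacle" you single out is handled in the paper exactly as you anticipate: the algebra is first carried out for $v\in\Cinfc(\X)$ (so that $u=\e^{\psi/2+v/2}$ is manifestly in $\mathcal{D}^+(\Lc)$), and the passage to $v\in\H^1(\nu)$ and to the full supremum over $\mathcal{D}^+(\Lc)$ is obtained from the Young-type bound~\eqref{eq:ineqH1} together with the density of $\Cinfc(\X)$ in $\H^1(\nu)$, the restriction to exponential perturbations $d\nu=\e^v\,d\mu$ being precisely what makes this step workable (cf.\ Remark~\ref{rem:extendrho}).
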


Theorem~\ref{theo:InormH} expresses the rate function as the sum of dual norms
of the symmetric and antisymmetric parts of the dynamics. Note also that we consider a measure
of the form~$d\nu=\e^v \,d\mu$, that is the Radon--Nikodym derivative of~$\nu$ with respect
to~$\mu$ is positive. However, we believe that we can consider more general measures~$\nu$,
see Remark~\ref{rem:extendrho} in the proof. Since the measure~$\nu$
at hand appears both inside the norms and in the definition of the norms themselves, a
possibly clearer rewriting is the following:
\[
I(\nu) = \frac{1}{4} \left| \log \frac{d\nu}{d\mu}\right|_{\H^1(\nu)}^2
+ \frac{1}{4} \left|\LA \left(\log \frac{d\nu}{d\mu}\right) \right|_{\H^{-1}(\nu)}^2.
\]
Moreover,  the symmetric part of the rate function~\eqref{eq:Isym} can be written as a Fisher
information for the invariant measure~$\mu$, a standard result~\cite{gartner1977large}: denoting
by $\rho = d\nu/ d\mu$, it holds
\[
\IS(\nu) = 
\frac{1}{4} \intX \frac{\nabla \rho\cdot S \nabla \rho}{\rho}\, d\mu.
\]

The next corollary builds upon~\eqref{eq:Iantisym} by rewritting~$\IA$ using a Poisson equation,
which can be manipulated more easily. The proof can be found in Section~\ref{sec:Iantisym}.

\begin{corollary}
  \label{cor:Iantisym}
  Suppose that Assumptions~\ref{as:regularity},~\ref{as:control} and~\ref{as:lyapunov} hold true, and
  consider a measure~$\nu\in\mathcal{P}_\kap(\X)$ such that
  $d\nu = \e^v d\mu$ with $v\in\H^{1}(\nu)$ and $\LA v\in\H^{-1}(\nu)$.
  Then, the antisymmetric part of the rate function~\eqref{eq:Iantisym} reads
  \begin{equation}
    \label{eq:Ipoisson}
    \IA(\nu) =  \frac{1}{4} \intX \mathscr{C}(\psi_v, \psi_v)\, d\nu,
  \end{equation}
  where~$\psi_v$ is the unique solution in~$\H^1(\nu)$ to the Poisson equation
  \begin{equation}
    \label{eq:psiv}
  \widetilde{\nabla}(S  \nabla \psi_v) = \LA v,
  \end{equation}
  the symmetric matrix~$S$ being defined in~\eqref{eq:gen} and $\widetilde{\nabla}$ denoting
  the adjoint of the gradient operator in~$L^2(\nu)$. 
\end{corollary}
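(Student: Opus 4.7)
The plan is to start from the identity $\IA(\nu) = \frac{1}{4}|\LA v|_{\H^{-1}(\nu)}^2$ provided by Theorem~\ref{theo:InormH}, and to evaluate this dual norm explicitly through its defining variational problem. Observe that on $\Cinfc(\X)$ the seminorm reads $|\psi|_{\H^1(\nu)}^2 = \intX \mathscr{C}(\psi,\psi)\,d\nu = \intX \nabla\psi\cdot S\nabla\psi\,d\nu$, so the dual norm is the supremum
\[
|\LA v|_{\H^{-1}(\nu)}^2 = \sup_{\psi\in\Cinfc(\X)} \left\{ 2\intX (\LA v)\psi\,d\nu - \intX \mathscr{C}(\psi,\psi)\,d\nu \right\}.
\]

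The first step is to establish existence and uniqueness of $\psi_v$. The bilinear form $a(\psi,\phi) = \intX \mathscr{C}(\psi,\phi)\,d\nu$ is precisely the inner product that makes $\H^1(\nu)$ a Hilbert space (after quotienting by $\sim_1$ and completing). The assumption $\LA v \in \H^{-1}(\nu)$ means, by the duality between $\H^1(\nu)$ and $\H^{-1}(\nu)$ recalled in Section~\ref{sec:setting}, that $\phi\mapsto \intX(\LA v)\phi\,d\nu$ extends to a bounded linear form on $\H^1(\nu)$. The Riesz representation theorem (or equivalently Lax--Milgram) then yields a unique $\psi_v\in\H^1(\nu)$ such that
\[
\forall\,\phi\in\H^1(\nu),\quad \intX \mathscr{C}(\psi_v,\phi)\,d\nu = \intX (\LA v)\phi\,d\nu,
\]
which is precisely the weak form of the Poisson equation~\eqref{eq:psiv} with $\widetilde{\nabla}(S\nabla\cdot)$ interpreted as the adjoint of $S\nabla$ in $L^2(\nu)$.

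The second step is to compute the supremum. For any $\psi\in\Cinfc(\X)$, write $\psi = \psi_v + (\psi-\psi_v)$ and expand using the Poisson equation tested against $\psi-\psi_v\in\H^1(\nu)$:
\[
2\intX (\LA v)\psi\,d\nu - \intX \mathscr{C}(\psi,\psi)\,d\nu = \intX \mathscr{C}(\psi_v,\psi_v)\,d\nu - \intX \mathscr{C}(\psi-\psi_v,\psi-\psi_v)\,d\nu.
\]
Since $\mathscr{C}(\cdot,\cdot)$ is nonnegative, the supremum is $\intX \mathscr{C}(\psi_v,\psi_v)\,d\nu$, reached along a minimizing sequence $\psi_n\in\Cinfc(\X)$ with $\psi_n\to\psi_v$ in $\H^1(\nu)$, provided by the very definition of $\H^1(\nu)$ as the closure of $\Cinfc(\X)$. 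Combined with Theorem~\ref{theo:InormH}, this yields~\eqref{eq:Ipoisson}.

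The main subtlety to handle carefully is the interplay between test functions in $\Cinfc(\X)$ (as used in the definition of $|\cdot|_{\H^{-1}(\nu)}$) and the abstract solution $\psi_v\in\H^1(\nu)$, which need not have compact support nor even belong to $L^2(\nu)$. Density of $\Cinfc(\X)$ in $\H^1(\nu)$ resolves this: the identity testing $\LA v$ against $\psi-\psi_v$ is initially valid for $\psi,\psi_v\in\Cinfc(\X)$ approximants and passes to the limit since both sides are continuous in the $\H^1(\nu)$-topology by the assumption $\LA v\in\H^{-1}(\nu)$. A secondary point is to verify that the weak formulation indeed corresponds to the equation $\widetilde{\nabla}(S\nabla \psi_v) = \LA v$ as stated, which is automatic once the adjoint is interpreted in the distributional sense relative to the measure $\nu$ that carries the duality.
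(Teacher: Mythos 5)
Your proof is correct and follows essentially the same route as the paper: both start from $\IA(\nu)=\tfrac14|\LA v|_{\H^{-1}(\nu)}^2$ given by Theorem~\ref{theo:InormH} and evaluate the dual norm variationally, obtaining the unique minimizer $\psi_v\in\H^1(\nu)$ of the associated quadratic functional via the Riesz/Lax--Milgram argument (what the paper calls ``standard results of calculus of variations'') and identifying the optimal value with $\intX \mathscr{C}(\psi_v,\psi_v)\,d\nu$. Your explicit completion of the square and the density argument passing from $\Cinfc(\X)$ test functions to $\H^1(\nu)$ are just a more detailed rendering of the same computation.
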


It has been known for a long
time~\cite{donsker1975variational} that the rate function of a reversible process is a Fisher
information as in~\eqref{eq:Isym}. The antisymmetric part of the rate function has been
less investigated, although an expression like~\eqref{eq:Ipoisson} already
appears in~\cite{gartner1977large} (see also~\cite{rey2015irreversible,bodineau2008large}).
However, our setting provides natural well-posedness conditions for both parts of the
rate function to be finite. Moreover, the
uniqueness of~$\psi_v$ is a consequence of the definition of~$\H^1(\nu)$ through equivalence
classes, see Section~\ref{sec:setting}.

Interestingly, the solution~$\psi_v$ of~\eqref{eq:psiv} can be
formally represented through~\cite{lelievre2016partial}
\[
\psi_v = \int_0^{+\infty} \e^{t \Lc_{\nu}} (\LA v) \,dt,
\]
where $\Lc_{\nu}= -  \widetilde{\nabla}(S  \nabla\, \cdot\,) $. 
The stochastic process~$(X_t^{\nu})_{t\geq 0}$ associated with~$\Lc_\nu$ is reversible with
respect to~$\nu$. Denoting by~$\e^{-V_{\nu}}$ the density of~$\nu$ with respect to the Lebesgue
measure,~$(X_t^{\nu})_{t\geq 0}$ is solution to the following SDE:
\[
dX_t^{\nu} = - S \nabla V_\nu(X_t^{\nu})\,dt + \nabla\cdot S(X_t^{\nu})\,dt
+ \sigma(X_t^{\nu})\,dB_t, \qquad X_0^\nu \sim \nu.
\]
Finally~\eqref{eq:Ipoisson} takes the form
\begin{equation}
  \label{eq:Icorr}
  \IA(\nu) = \frac{1}{4} \int_0^{+\infty} \E_{\nu}\Big[ \big(\LA v\big)(X_0^\nu)
    \big(\LA v\big)(X_t^\nu)\Big]dt.
\end{equation}
The antisymmetric part of the entropy is therefore the autocorrelation of~$\LA v$
along a reversible process that realizes the fluctuation corresponding to the measure~$\nu$.
From a mathematical point of view, it seems interesting to relate~\eqref{eq:Icorr} to the
so-called level 2.5 of large deviations~\cite{barato2015formal,chetrite2015variational}, since
this approach consists in considering joint fluctuations of the empirical measure and
the associated empirical current. In
this case, the large deviations function is explicit: this reflects the fact that a Markov process
is characterized \emph{entirely} by its density and current. Exploring further the connection
between~\eqref{eq:Icorr} and level 2.5 large deviations is an interesting direction for
future works.

\begin{remark}
  It is also possible to consider the adjoint~$\Lc^*$ not with respect to the invariant
  measure~$\mu$ (whose analytical expression may be unknown), but instead with respect
  to a reference measure~$\mu_{\mathrm{ref}}$ with a known analytical expression such that $\Lc^* = \Lc_1 - \Lc_2 + \xi$ for some
  measurable function~$\xi$ (with $\Lc = \Lc_1+\Lc_2$). This leads to an additionnal term  $-\intX \xi\,d\nu$ in the
  expression of the rate function~\eqref{eq:Idecomp}, as can be
  readily checked by a straightforward adaptation of the proof.
  The operators $\Lc_1$ and $\Lc_2$ are the counterparts
  of the symmetric and antisymmetric parts of the generator in this decomposition.
  A typical situation to apply this strategy is provided by systems subject to a small external nonequilibrium forcing,
  the reference measure usually being chosen as the invariant measure at equilibrium, in the absence of external forcing.
  Atom chains in contact with an inhomogeneous heat bath were studied with this approach
  in~\cite{bodineau2008large}, $\mu_{\mathrm{ref}}$ being the Gibbs measure associated with
  a fixed temperature profile. 
\end{remark}

\section{Applications}
\label{sec:applications}

\subsection{Overdamped Langevin dynamics}
\label{sec:overdamped}
In this section, we come back to the setting of Remark~\ref{rem:cramer} by considering
a diffusion process over $\X=\R^d$ subject to
\begin{equation}
  \label{eq:overdamped}
dX_t =b(X_t)\,dt + \sqrt{2}\, dB_t,
\end{equation}
where $b:\R^d\to\R^d$ is a smooth function and~$(B_t)_{t\geq 0}$ is a $d$-dimensional Brownian
motion. This corresponds to~\eqref{eq:SDE} with $\sigma=\sqrt{2}$, in which case the generator reads
\[
\Lc = b \cdot \nabla + \Delta.
\]
We will treat the reversible case where $b = - \nabla V$ for a smooth potential~$V$, and 
$b = - \nabla V + F$ for a smooth function~$F$ such that $\nabla\cdot( F\e^{-V})=0$.
In both cases, the invariant probability measure~$\mu$ of the process is (assuming
$\e^{-V} \in L^1(\X)$)
\begin{equation}
  \label{eq:gibbsoverdamped}
\mu(dx) = Z^{-1}\e^{-V(x)}dx,\quad Z=\intX \e^{-V}<+\infty.
\end{equation}
The dynamics~\eqref{eq:overdamped} is reversible (\textit{i.e.} $\Lc^*=\Lc$,
where~$\Lc^*$ denotes the adjoint of~$\Lc$ in~$L^2(\mu)$) if and only if $b=-\nabla V$.
We now give a standard condition on~$V$ under which the framework developped in
Sections~\ref{sec:LDP} and~\ref{sec:Donsker} applies.

\begin{assumption}
  \label{as:overdamped}
  The potential $V\in\mathscr{S}$ has compact level sets, satisfies
  $\e^{-V} \in L^1(\X)$ and, for any $\theta\in(0,1)$, it holds
  \begin{equation}
    \label{eq:wittenoverdamped}
  (1 - \theta)| \nabla V|^2 - \Delta V \xrightarrow[|x|\to +\infty]{} + \infty.
  \end{equation}
\end{assumption}
This assumption is satisfied for smooth potentials growing like~$|x|^{q}$ for
$q >1$ at infinity, and it also implies that the invariant probability measure~$\mu$
satisfies a Poincaré inequality~\cite{bakry2008simple}. Similar conditions are
derived in~\cite{kontoyiannis2005large} in the context of large deviations. The next
proposition is a direct application of Propositions~\ref{prop:nonlin} and~\ref{prop:ergodicity},
Theorem~\ref{theo:LDP} and Corollary~\ref{cor:Iantisym}.

\begin{proposition}
  \label{prop:overdamped}
  Under Assumption~\ref{as:overdamped}, the process~\eqref{eq:overdamped} with~$b=-\nabla V$
  admits the function
  \[
  W(x) = \e^{\theta V(x)}
  \]
  for any $\theta \in(0,1)$ as a Lyapunov function in the sense of Assumption~\ref{as:lyapunov}.
  For any fixed $\theta \in(0,1)$, there exist $C,c>0$ such that for any initial
  measure $\nu\in\mathcal{P}_W(\X)$,
  \[
  \mathrm{d}_W(\nu P_t,\mu)\leq C \,\e^{-c t} \mathrm{d}_W(\nu,\mu).
  \]
  Moreover, 
  \begin{equation}
    \label{eq:Psovd}
  \Ps = -\frac{\Lc W}{W} = \theta\big(  (1 - \theta)| \nabla V|^2 - \Delta V\big)
  \end{equation}
  has compact level sets and, for any $\kap:\X\to[1,+\infty)$ belonging to~$\mathscr{S}$,
    bounded  or with compact level sets and such that
  \[
  \frac{\Ps(x)}{\kap(x)}  \xrightarrow[|x|\to +\infty]{} + \infty,
  \]
  the empirical measure
  \[
  L_t:= \frac{1}{t}\int_0^t \delta_{X_t}\,ds
  \]
  satisfies a large deviations principle in the $\tau^\kap$-topology. The good rate function
  is defined by: for all~$\nu\in\mathcal{P}_\kap(\X)$ with~$d\nu = \rho\,d\mu = \e^v\,d\mu$,
  \begin{equation}
    \label{eq:Ioverdamped}
       I(\nu) = \frac{1}{4} \intX |\nabla v|^2\,d\nu =
   \frac{1}{4}\intX \frac{|\nabla \rho|^2}{\rho}\, d\mu, 
  \end{equation}
  and $I(\nu)=+\infty$ otherwise.
\end{proposition}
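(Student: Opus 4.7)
The plan is to verify that Assumption~\ref{as:overdamped} implies the hypotheses of Proposition~\ref{prop:nonlin} with $V$ itself as the nonlinear Lyapunov function, which yields Assumption~\ref{as:lyapunov} for $W = \e^{\theta V}$. All remaining assertions then follow by chaining Proposition~\ref{prop:ergodicity}, Theorem~\ref{theo:LDP}, Theorem~\ref{theo:InormH} and Corollary~\ref{cor:Iantisym}.

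For the first step, with $\sigma = \sqrt{2}\,\id$ one has $|\sigma^T\nabla V|^2 = 2|\nabla V|^2$ and $\Lc V = -|\nabla V|^2 + \Delta V$, so a direct computation gives
\[
-\Lc V - \frac{\theta}{2}|\sigma^T\nabla V|^2 = (1-\theta)|\nabla V|^2 - \Delta V.
\]
Coercivity of this expression for every $\theta \in (0,1)$ is exactly~\eqref{eq:wittenoverdamped}. To obtain the equivalence $\sim |\sigma^T\nabla V|^2$ required in~\eqref{eq:nonlin}, I would first subtract~\eqref{eq:wittenoverdamped} at two distinct values of $\theta$ to deduce $|\nabla V|^2 \to +\infty$ at infinity, which in particular gives compact level sets for $|\sigma^T\nabla V|$. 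Then, since $V \in \mathscr{S}$ so that $\Delta V$ grows at most polynomially while $|\nabla V|^2$ diverges, a pinching argument (comparing $(1-\theta)|\nabla V|^2 - \Delta V$ with two affine multiples of $2|\nabla V|^2$, one with slope slightly below and one slightly above $(1-\theta)/2$, outside a large enough ball) delivers the two-sided bound defining $\sim$.

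Proposition~\ref{prop:nonlin} then produces Assumption~\ref{as:lyapunov} with $W = \e^{\theta V}$ and $\mathscr{W} = \e^{\varepsilon V}$ for $\varepsilon < \theta/2$ small enough, together with $\Ps \sim |\nabla V|^2$. A direct evaluation of $-\Lc W/W$ yields the explicit formula~\eqref{eq:Psovd}, and the compact level-set property comes again from~\eqref{eq:wittenoverdamped}. The admissible class of $\kap$ described in the proposition (polynomial-type, with $\kap \ll \Ps$) satisfies~\eqref{eq:condkappa} since $|\sigma^T\nabla \log \kap| = \sqrt{2}\,|\nabla \log \kap|$ is then bounded, so Proposition~\ref{prop:nonlin} also yields the Lyapunov inequality~\eqref{eq:Lyapunov_kappaW} for $\kap W$.

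The last steps are essentially mechanical. Proposition~\ref{prop:ergodicity} delivers the exponential decay in $\mathrm{d}_W$, and Theorem~\ref{theo:LDP} provides the LDP in the $\tau^\kap$-topology with good rate function~$I$. For the explicit form of $I$, note that $b = -\nabla V$ makes $\Lc$ self-adjoint on $L^2(\mu)$, so $\LA = 0$; Corollary~\ref{cor:Iantisym} then gives $\IA(\nu) = 0$, and Theorem~\ref{theo:InormH} reduces to $I(\nu) = \IS(\nu) = \frac{1}{4}|v|_{\H^1(\nu)}^2$. With $S = \id$, the carré du champ is $\mathscr{C}(v,v) = |\nabla v|^2$, which yields the first equality in~\eqref{eq:Ioverdamped}; the Fisher-information form follows from $v = \log \rho$ and $\nabla v = \nabla \rho/\rho$. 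The only mildly nontrivial step is the verification of~\eqref{eq:nonlin} from the one-sided coercivity~\eqref{eq:wittenoverdamped}; everything else is a direct invocation of results already established in the paper.
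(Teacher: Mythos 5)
Your route is exactly the paper's own proof, which consists of the single remark that the proposition is a direct application of Propositions~\ref{prop:nonlin} and~\ref{prop:ergodicity}, Theorem~\ref{theo:LDP} and Corollary~\ref{cor:Iantisym}; the mechanical parts of your proposal (the computation of $-\Lc V-\tfrac{\theta}{2}|\sigma^T\nabla V|^2$ and of~\eqref{eq:Psovd}, the identification $\LA=0$ in the reversible case, $\mathscr{C}(v,v)=|\nabla v|^2$ for $S=\id$, and the Fisher-information rewriting of~\eqref{eq:Ioverdamped}) are all correct.

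The gap is precisely in the step you flag as the only nontrivial one, namely deducing~\eqref{eq:nonlin} from~\eqref{eq:wittenoverdamped}, and both arguments you offer there fail. Subtracting~\eqref{eq:wittenoverdamped} at two values of~$\theta$ gives nothing: the difference of two functions tending to $+\infty$ is not controlled, so $|\nabla V|^2\to+\infty$ is not established this way. More seriously, Assumption~\ref{as:overdamped} only yields the one-sided bounds $\Delta V\le b\,|\nabla V|^2+C_b$ for every $b\in(0,1)$ (coercivity means lower boundedness), i.e.\ control of the \emph{positive} part of $\Delta V$, whereas the missing half of~\eqref{eq:nonlin} is $-\Delta V\le c\,|\nabla V|^2+R$, a control of its \emph{negative} part; ``$\Delta V$ grows at most polynomially while $|\nabla V|^2$ diverges'' does not give this. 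In fact the implication you are trying to prove is false: in dimension one take $V(x)=x^2$ plus, near each integer $n$, a perturbation whose second derivative is a dip of depth $n^3$ and width $n^{-7}$ compensated by a bump of height $1$ and width $n^{-4}$; this $V$ belongs to $\mathscr{S}$, has compact level sets, satisfies $(1-\theta)|V'|^2-V''\to+\infty$ for every $\theta\in(0,1)$, yet at the dips $-V''\sim n^3$ while $|V'|^2\sim 4n^2$, so~\eqref{eq:nonlin} fails. Hence Proposition~\ref{prop:nonlin} cannot be invoked as a black box under Assumption~\ref{as:overdamped} alone. The repair is to verify Assumption~\ref{as:lyapunov} directly with $W=\e^{\theta V}$ and $\mathscr{W}=\e^{\varepsilon V}$, $\varepsilon<\theta/2$ small: $\Ps=\theta\big((1-\theta)|\nabla V|^2-\Delta V\big)$ has compact level sets immediately by~\eqref{eq:wittenoverdamped}, and every inequality in~\eqref{eq:restriction} and~\eqref{eq:Lyapunov_kappaW} (for $\kappa$ obeying conditions of the type~\eqref{eq:condkappa}, which is what you implicitly assume when you declare $|\nabla\log\kappa|$ bounded) follows from the two valid bounds $\Delta V\le b|\nabla V|^2+C_b$ and $|\nabla V|^2\le c\,\Ps+R$, both consequences of~\eqref{eq:wittenoverdamped}; neither $|\nabla V|\to\infty$ nor the two-sided equivalence $\Ps\sim|\nabla V|^2$ is actually needed. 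Equivalently, note that the appendix proof of Proposition~\ref{prop:nonlin} only ever uses these one-sided bounds; but as written, your verification of its hypothesis~\eqref{eq:nonlin} does not go through.
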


In this reversible example, we see that the rate function is only defined through its symmetric
part~\eqref{eq:Isym}, as shown in Theorem~\ref{theo:InormH}. We now consider a modification of
this dynamics when a divergence-free drift is added. The next proposition is an extension of
the examples proposed in~\cite{rey2015irreversible} to the unbounded state space case.

\begin{proposition}
  \label{prop:overdampedirr}
  Suppose that Assumption~\ref{as:overdamped} holds and consider the diffusion process solution to:
  \[
  dX_t = \big(- \nabla V(X_t) + F(X_t)\big)dt + \sqrt{2}\,dB_t,
  \]
  with~$F$ a smooth vector field such that $\nabla\cdot(F\e^{-V})= 0$ and
  \begin{equation}
    \label{eq:orthoF}
  \frac{F\cdot \nabla V}{\Ps} \xrightarrow[|x|\to +\infty]{} 0,
  \end{equation}
  where~$\Ps$ is defined in~\eqref{eq:Psovd}. Then, with the notation of Section~\ref{sec:irrever}
  it holds  $\LS = -\nabla V\cdot \nabla + \Delta$ and $\LA = F\cdot \nabla$.
  Moreover
  \begin{equation}
    \label{eq:wittennoneq}
  \Ps_F:= -\frac{(\Lc+F\cdot \nabla) W}{W} =
  \theta \big((1 - \theta)| \nabla V|^2 - \Delta V - F\cdot \nabla V\big) \sim \Ps,
  \end{equation}
  and~$(X_t)_{t\geq 0}$ satisfies a LDP in the $\tau^\kap$-topology for any
  function~$\kap$ belonging to~$\mathscr{S}$, bounded or with compact level sets and such that
  \[
  \frac{\Ps(x)}{\kap(x)}
  \xrightarrow[|x|\to +\infty]{} + \infty.
  \]
  The associated rate function~$\IF$ reads: for any~$\nu$ such that $d\nu=\e^{v}\,d\mu$
  with~$v\in\H^1(\nu)$ and $F\cdot \nabla v \in \H^{-1}(\nu)$,
  \[
  \IF(\nu) = \frac{1}{4}\intX |\nabla v|^2\, d\nu +
  \frac{1}{4} \intX |\nabla \psi_v|^2 d\nu,
  \]
  where~$\psi_v$ is the unique $\H^1(\nu)$-solution to
  \[
  -\Delta \psi_v + \nabla (V-v) \cdot \nabla \psi =  F\cdot \nabla v.
  \]
\end{proposition}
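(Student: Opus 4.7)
The plan is to apply, in sequence, Proposition~\ref{prop:nonlin}, Theorem~\ref{theo:LDP}, Theorem~\ref{theo:InormH} and Corollary~\ref{cor:Iantisym} to the generator $\Lc_F = (-\nabla V + F)\cdot \nabla + \Delta$, after having identified its symmetric/antisymmetric decomposition and verified that $W=\e^{\theta V}$ remains a Witten--Lyapunov function. The key input is the orthogonality-type condition~\eqref{eq:orthoF}, which controls the perturbation introduced by $F$.

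First, I would check that $F\cdot\nabla$ is antisymmetric in $L^2(\mu)$. An integration by parts for $\varphi,\psi\in\Cinfc(\X)$ produces an extra term proportional to $\intX \varphi\psi\,\nabla\cdot(F\e^{-V})\,dx$ which vanishes by hypothesis, giving $(F\cdot\nabla)^*=-F\cdot\nabla$; since $-\nabla V\cdot\nabla+\Delta$ is the standard symmetric overdamped Langevin operator on $L^2(\mu)$, the decomposition $\LS=-\nabla V\cdot\nabla+\Delta$, $\LA=F\cdot\nabla$ follows, and $\mu$ is in particular invariant for $\Lc_F$.

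Second, a direct computation with $W=\e^{\theta V}$ gives $\Ps_F=\Ps-\theta F\cdot\nabla V$, with $\Ps$ as in~\eqref{eq:Psovd}. The negligibility~\eqref{eq:orthoF} of $F\cdot\nabla V$ with respect to $\Ps$ is exactly what yields $\Ps_F\sim\Ps$: for any small $\varepsilon>0$ there is $R>0$ with $|F\cdot\nabla V|\leq\varepsilon\Ps+R$, so both inequalities defining the equivalence hold. To apply Proposition~\ref{prop:nonlin} to the perturbed drift $-\nabla V + F$, I would verify that $-\Lc_F V-\tfrac{\theta}{2}|\sigma^T\nabla V|^2=(1-\theta)|\nabla V|^2-\Delta V- F\cdot\nabla V$ is equivalent to $|\sigma^T\nabla V|^2=2|\nabla V|^2$, which follows from Assumption~\ref{as:overdamped} combined with~\eqref{eq:orthoF}. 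Assumption~\ref{as:regularity} (hypoellipticity) and Assumption~\ref{as:control} (controllability) are immediate since $\sigma=\sqrt{2}\,\id$ is constant and elliptic; the last statement of Proposition~\ref{prop:nonlin} also provides the condition $\Lc_F(\kap W)\leq c\kap W$ on any admissible~$\kap$. Theorem~\ref{theo:LDP} then delivers the LDP in the $\tau^\kap$-topology.

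Third, for the rate function formula, Theorem~\ref{theo:InormH} and Corollary~\ref{cor:Iantisym} with $S=\id$ and $\mathscr{C}(\varphi,\varphi)=|\nabla\varphi|^2$ immediately give
\[
\IF(\nu) = \tfrac{1}{4}\intX|\nabla v|^2\,d\nu + \tfrac{1}{4}\intX|\nabla\psi_v|^2\,d\nu,
\]
with $\psi_v$ the unique $\H^1(\nu)$-solution of $\widetilde{\nabla}(\nabla\psi_v)=\LA v=F\cdot\nabla v$. The last step is to compute the adjoint gradient $\widetilde{\nabla}$ in $L^2(\nu)$: since the Lebesgue density of $\nu$ is proportional to $\e^{v-V}$, integration by parts yields $\widetilde{\nabla} G=-\nabla\cdot G + G\cdot\nabla(V-v)$ for any smooth vector field~$G$; applied to $G=\nabla\psi_v$ this recovers the Poisson equation of the statement. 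The main technical point should be the equivalence $\Ps_F\sim\Ps$ and the careful handling of the equivalence $\sim$ in the presence of~\eqref{eq:orthoF}; everything else reduces to substitution into the general results of Sections~\ref{sec:LDP} and~\ref{sec:Donsker}.
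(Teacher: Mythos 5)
Your proposal is correct and follows essentially the same route the paper intends, since it presents this proposition as a direct application of Proposition~\ref{prop:nonlin}, Theorem~\ref{theo:LDP}, Theorem~\ref{theo:InormH} and Corollary~\ref{cor:Iantisym}: antisymmetry of $F\cdot\nabla$ in $L^2(\mu)$ from $\nabla\cdot(F\e^{-V})=0$, the computation $\Ps_F=\Ps-\theta\,F\cdot\nabla V\sim\Ps$ via~\eqref{eq:orthoF}, and the identification of $\widetilde{\nabla}$ in $L^2(\nu)$ to recover the stated Poisson equation. The remaining verifications you sketch (ellipticity of $\sigma=\sqrt{2}\,\id$ giving Assumptions~\ref{as:regularity} and~\ref{as:control}, and the $S=\id$ substitutions in the symmetric and antisymmetric parts of the rate function) match the paper's argument.
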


Proposition~\ref{prop:overdampedirr} shows that, in this simple case, the equilibrium and
nonequilibrium dynamics admit a LDP for the same class of functions but with different rate
functions, the irreversible dynamics producing more entropy.
It is therefore an extension of the case treated in~\cite[Theorem~2.2]{rey2015irreversible}.
As for this result, Proposition~\ref{prop:overdampedirr} can be used to design algorithms with
accelerated convergence to equilibrium, see
also~\cite{hwang2005accelerating,hwang2015variance,duncan2016variance}.
A setting in which Proposition~\ref{prop:overdampedirr} typically applies is when
$V(x)$ behaves as $|x|^q$ for some $q>1$ outside an open set centered on the origin,
and $F= A\nabla V$ with $A\in\R^{d\times d}$ such that $A^T = - A$
(see~\cite{rey2015irreversible}). The latter condition implies in particular
that $F\cdot\nabla V = 0$  so~\eqref{eq:orthoF} immediately holds.


\subsection{Underdamped Langevin dynamics}
\label{sec:langevin}
We now apply our framework to the underdamped Langevin dynamics. A first nice feature
of our results is that, compared to~\cite{wu2001large}, we obtain a stronger result with 
similar assumptions -- that is our LDP for the empirical measure holds for a finer topology than
the one associated with bounded measurable functions. Note however
that~\cite[Corollary~2.3]{wu2001large} obtains 
results similar to ours for a contraction of the rate function. In addition, 
Theorem~\ref{theo:InormH} and Corollary~\ref{cor:Iantisym} allow to obtain precise results on
the dependency of the rate function on the friction parameter~$\gamma$.

We start by describing the Langevin equation in Section~\ref{sec:langevinintro}, before
stating the large deviations principle in Section~\ref{sec:langevinLDP}. Finally
Section~\ref{sec:langevinasymp} provides asymptotics on the rate function depending on
the friction.

\subsubsection{Description of the dynamics}
\label{sec:langevinintro}
The dynamics is set on $\X=\R^d \times \R^d$, with
$(X_t)_{t\geq 0}=(q_t,p_t)_{t\geq 0}\in\R^d \times \R^d$ evolving as
\begin{equation}
  \label{eq:langevin}
  \left\{
\begin{aligned}
  d q_t & = p_t \,dt, \\
  d p_t & = -\nabla V(q_t)\, dt - \gamma p_t\, dt + \sqrt{2\gamma}\,dB_t,
\end{aligned}
\right.
\end{equation}
where $\gamma >0$ is a friction parameter, $V:\R^d \to \R$ is a smooth potential,
and~$(B_t)_{t \geq 0}$ is a $d$-dimensional Brownian motion. We could also consider
the easier case where the position space is bounded ($q\in\mathbb{T}^d$) but leave
this simple modification to the reader. The generator of the dynamics is
\begin{equation}
  \label{eq:Lgamma}
\Lc_{\gamma} = \Lham + \gamma \LFD,
\end{equation}
where
\[
\Lham= p\cdot \nabla_q - \nabla V \cdot \nabla_p,\quad
\LFD = -p\cdot\nabla_p + \Delta_p.
\]
The operator~$\Lc_\gamma$ leaves invariant the measure
\begin{equation}
  \label{eq:mulangevin}
  \muref(dx)=\mu(dq\,dp)=\bar{\mu}(dq)\omega(dp),\quad
  \bar{\mu}(dq)=Z_q^{-1}\e^{ - V(q)} dq, \quad
  \omega(dp) = (2\pi)^{-d/2} \e^{- \frac{p^2}{2}} dp.
\end{equation}
The invariant measure~\eqref{eq:mulangevin} can be written
\begin{equation}
  \label{eq:mulangevinH}
\mu(dq\,dp) = Z^{-1} \e^{-H(q,p)}\,dq\,dp,
\end{equation}
where
\begin{equation}
  \label{eq:hamiltonian}
  H(q,p) = V(q) +\frac{p^2}{2}
\end{equation}
is the Hamiltonian of the system, and we assume that the normalization constant~$Z$
in~\eqref{eq:mulangevinH} is finite (which is indeed the case when $\e^{-V}\in L^1(\mu)$).
In~\eqref{eq:Lgamma},
the Liouville operator~$ \Lham$  corresponding to the Hamiltonian part of the dynamics
is antisymmetric in~$L^2(\mu)$. On the other hand, the fluctuation-dissipation part with
generator~$\LFD$ is symmetric in~$L^2(\mu)$, so that~$\LA = \Lham$ and~$\LS = \gamma\LFD$ with the
notation of Section~\ref{sec:irrever}.

Before turning to the LDP associated with the Langevin dynamics~\eqref{eq:langevin}, we give
some intuition on the behaviour of the process as~$\gamma$ varies. First, it is clear that in
the small~$\gamma$ limit,~\eqref{eq:langevin} becomes the Hamiltonian dynamics
\[
  \left\{
\begin{aligned}
  d q_t & = p_t \,dt, \\
  d p_t & = -\nabla V(q_t)\, dt.
\end{aligned}
\right.
\]
To be more precise, we introduce the process $(Q_t^\gamma,P_t^\gamma)=(q_{t/\gamma},p_{t/\gamma})$
where~$(q_t,p_t)_{t\geq0}$ is solution to~\eqref{eq:langevin}. It can then be shown that,
in the limit $\gamma\to 0$, the Hamiltonian $H(Q_t^\gamma,P_t^\gamma)$ converges
to an effective diffusion on a
graph~\cite{freidlin1994random,freidlin1998random,freidlin2004random,hairer2008ballistic}.
In particular the relevant time scale in the underdamped limit is~$\gamma^{-1}t$.

On the other hand, in the limit $\gamma\to +\infty$
and under an appropriate time rescaling, we recover the overdamped dynamics studied in
Section~\ref{sec:overdamped}. To see this, we integrate the second line in~\eqref{eq:langevin}
to obtain
\[
p_t - p_0 = -\int_0^t \nabla V (q_s)\,ds - \gamma (q_t - q_0) + \sqrt{2\gamma}B_t.
\]
By introducing now~$Q_t^\gamma = q_{\gamma t}$ and $P_t^\gamma = p_{\gamma t}$, the
latter equality becomes
\[
Q_t^\gamma - Q_0^\gamma = \frac{P_0^\gamma - P_t^\gamma}{\gamma} - \int_0^t \nabla V(Q_s^\gamma)\,ds
+ \sqrt{2}B_t.
\]
When $\gamma\to +\infty$, we observe that~$Q_t^\gamma$ converges formally towards the solution
of~\eqref{eq:overdamped}, see~\cite[Section~6.5]{pavliotis2014stochastic}. The relevant time
scale in the overdamped limit is therefore~$\gamma t$. These remarks
will be of interest below when studying the rate function associated with the
dynamics~\eqref{eq:langevin}.

\subsubsection{Large deviations}
\label{sec:langevinLDP}
In order to obtain a large deviations principle for~\eqref{eq:langevin},
let us make the following classical assumption on the growth of the
potential~\cite{wu2001large,mattingly2002ergodicity,kopec2015weak,lelievre2016partial}.

\begin{assumption}
  \label{as:langevin}
  The potential $V\in\mathscr{S}$ has compact level sets, satisfies $\e^{-V}\in L^1(\X)$
  and there exist $c_V>0$, $C_V\in\R$ such that
  \[
  q\cdot\nabla V(q) \geq c_V |q|^2 - C_V.
  \]
\end{assumption}

We can now find a Lyapunov function for~\eqref{eq:langevin} by following
\textit{e.g.}~\cite{wu2001large,talay2002stochastic,mattingly2002ergodicity}, as made precise in
Appendix~\ref{sec:lyapunovlangevin}. Recall that the Hamiltonian~$H$ is defined
in~\eqref{eq:hamiltonian}.

\begin{lemma}
  \label{lem:lyapunovlangevin}
  Suppose that $(X_t)_{t\geq 0}=(q_t,p_t)_{t\geq 0}$ solves~\eqref{eq:langevin} where~$V$ satisfies
  Assumption~\ref{as:langevin}. Then for any $\gamma>0$ and $\theta\in(0,1)$, there exists
  $\varepsilon >0$ such that
  \begin{equation}
    \label{eq:lyapunovlangevin}
    W(q,p)= \e^{\theta H(q,p) + \varepsilon q\cdot p }
  \end{equation}
  is a Lyapunov function in the sense of Assumption~\ref{as:lyapunov}. More precisely,
  for any~$\gamma>0$ and $\theta\in(0,1)$, there exist~$\varepsilon>0$ and
  $a,b,C>0$ such that
  \[
  -\frac{\Lc_\gamma W}{W}\geq a |q|^2 + b |p|^2 - C.
  \]
\end{lemma}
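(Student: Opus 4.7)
The strategy rests on the Witten-type identity for positive $C^2$ Lyapunov functions: writing $W = e^{\Phi}$ with $\Phi(q,p) = \theta H(q,p) + \varepsilon\, q\cdot p$ and using that the diffusion of the Langevin dynamics acts only on momenta,
$$\frac{\mathcal{L}_\gamma W}{W} = \mathcal{L}_\gamma \Phi + \gamma\,|\nabla_p \Phi|^2.$$
The cross term $\varepsilon q \cdot p$ is crucial: under the Hamiltonian transport part of $\mathcal{L}_\gamma$ it produces a contribution proportional to $q\cdot \nabla V(q)$, which by Assumption~\ref{as:langevin} supplies coercivity in $|q|^2$; the purely kinetic choice $\Phi = \theta H$ would only yield coercivity in $|p|^2$, as one readily sees from the computation below.

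I would then carry out the explicit computation. From $\nabla_q\Phi = \theta\nabla V + \varepsilon p$, $\nabla_p\Phi = \theta p + \varepsilon q$ and $\Delta_p\Phi = \theta d$, the terms $\theta p\cdot \nabla V$ in $p\cdot\nabla_q\Phi$ and $-\nabla V\cdot \nabla_p\Phi$ cancel and one obtains
$$\mathcal{L}_\gamma \Phi = \varepsilon|p|^2 - \varepsilon\, q\cdot \nabla V - \gamma\theta |p|^2 - \gamma\varepsilon\, q\cdot p + \gamma\theta d,$$
together with $\gamma|\nabla_p\Phi|^2 = \gamma\theta^2|p|^2 + 2\gamma\theta\varepsilon\, q\cdot p + \gamma\varepsilon^2 |q|^2$. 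Summing and changing signs yields
$$-\frac{\mathcal{L}_\gamma W}{W} = \varepsilon\, q\cdot \nabla V + \gamma\theta(1-\theta)|p|^2 - \varepsilon |p|^2 - \gamma\varepsilon(2\theta-1)\, q\cdot p - \gamma\varepsilon^2|q|^2 - \gamma\theta d.$$

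Next I would lower-bound $q\cdot \nabla V \geq c_V|q|^2 - C_V$ via Assumption~\ref{as:langevin} and control the cross term with Young's inequality $|q\cdot p|\leq \eta|q|^2 + |p|^2/(4\eta)$. Choosing $\eta$ small (depending on $\gamma,\theta$) so that $\gamma|2\theta-1|\eta < c_V/2$, and then $\varepsilon$ small (depending on $\gamma,\theta,\eta$) so that the remaining coefficients of $|q|^2$ and $|p|^2$ stay strictly positive, produces the announced bound $-\mathcal{L}_\gamma W/W \geq a|q|^2 + b|p|^2 - C$. From this, the compact level sets of $\Psi = -\mathcal{L}_\gamma W/W$ follow at once. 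Moreover, for $\varepsilon$ small, the bound $|\varepsilon q\cdot p| \leq (\theta/4)(|q|^2 + |p|^2)$ gives $\Phi \geq (\theta/2) H - C''$, so $W$ is $C^2$ with compact level sets.

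To close Assumption~\ref{as:lyapunov} I would take the auxiliary function $\mathscr{W} = e^{\alpha \Phi}$ for some $\alpha \in (0,1/2)$; then $\mathscr{W}^2 \leq W$, and the same identity $\mathcal{L}_\gamma\mathscr{W}/\mathscr{W} = \alpha\,\mathcal{L}_\gamma \Phi + \alpha^2 \gamma |\nabla_p\Phi|^2$, combined with the coercive estimate just obtained on $-\mathcal{L}_\gamma \Phi$, gives both $\Psi \sim -\mathcal{L}_\gamma \mathscr{W}/\mathscr{W}$ and, after rearrangement, $-2\mathcal{L}_\gamma \mathscr{W}/\mathscr{W}\leq -\mathcal{L}_\gamma W/W + C_2$ provided $\alpha$ is chosen close enough to $1/2$. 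I expect the main technical obstacle to be this simultaneous tuning of $(\eta,\varepsilon,\alpha)$: one must ensure that the $-\gamma\varepsilon^2|q|^2$ loss coming from $\gamma|\nabla_p\Phi|^2$ does not overwhelm the $\varepsilon c_V |q|^2$ gain from Assumption~\ref{as:langevin}, while simultaneously keeping the $|p|^2$ coefficient positive, uniformly in the admissible parameters.
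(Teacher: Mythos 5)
Your proposal is correct and follows essentially the same route as the paper's proof: the same computation of $-\Lc_\gamma W/W$ (your expression matches the paper's term by term), then the lower bound $q\cdot\nabla V\geq c_V|q|^2-C_V$ from Assumption~\ref{as:langevin} combined with Young's inequality on the cross term $q\cdot p$, and finally the choice of $\eta$ small (depending on $\gamma,\theta$) followed by $\varepsilon$ small. You in fact go slightly beyond the paper's proof by also sketching the auxiliary function $\mathscr{W}=\e^{\alpha(\theta H+\varepsilon q\cdot p)}$ and the verification of the remaining conditions of Assumption~\ref{as:lyapunov} (where any fixed $\alpha\in(0,1/2)$ works, rather than $\alpha$ close to $1/2$), a step the paper leaves implicit.
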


The Lyapunov function~\eqref{eq:lyapunovlangevin} can be adapted in cases where~$V$
has singularities, see~\cite{herzog2017ergodicity,lu2019geometric}.
We can now deduce our main theorem on the Langevin dynamics since Assumptions~\ref{as:regularity}
and~\ref{as:control} are readily satisfied, see for instance~\cite{mattingly2002ergodicity}.

\begin{theorem}
  \label{theo:LDPlangevin}
  Assume that $(X_t)_{t\geq 0}=(q_t,p_t)_{t\geq 0}$ solves~\eqref{eq:langevin} where~$V$ satisfies
  Assumption~\ref{as:langevin}, and consider a smooth function~$\kappa$ with $\kap(q,p) = 1 +|q|^\alpha + |p|^\beta$ for $|q|+|p|\geq 1$ and $\alpha \in [0,2)$, $\beta\in[0,2)$.
  Then~$(X_t)_{t\geq 0}$ is ergodic with respect to the measure~$\mu$
  in the sense of Proposition~\ref{prop:ergodicity}, with Lyapunov
  function defined in~\eqref{eq:lyapunovlangevin}. Moreover, the empirical
  measure
  \[
  L_t := \frac{1}{t}\int_0^t \delta_{(q_s,p_s)}\,ds
  \]
  satisfies a LDP in the $\tau^\kap$-topology. Finally, for any $\nu\in\mathcal{P}_\kap(\X)$
  such that $d\nu=\e^v\,d\mu$ with $v\in\H^1(\nu)$ and $\Lham v\in \H^{-1}(\nu)$,
  the rate function reads
  \begin{equation}
    \label{eq:Ilangevin}
    I_{\gamma}(\nu) =  \frac{\gamma}{4} \intX  |\nabla_p v|^2\, d\nu
  + \frac{1}{4\gamma}\intX |\nabla_p \psi|^2 \,d\nu,
  \end{equation}
  where $\psi$ is the unique solution in~$\H^1(\nu)$ to the Poisson problem:
  \begin{equation}
    \label{eq:poissonlangevin}
   - \Delta_p \psi + (p - \nabla_p v)\cdot\nabla_p\psi = \Lham v.
  \end{equation}
\end{theorem}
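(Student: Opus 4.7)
The plan is to obtain Theorem~\ref{theo:LDPlangevin} as a direct application of the general framework: verify Assumptions~\ref{as:regularity},~\ref{as:control} and~\ref{as:lyapunov} for the Langevin dynamics, then invoke Theorem~\ref{theo:LDP} to obtain the large deviations principle, and Theorem~\ref{theo:InormH} together with Corollary~\ref{cor:Iantisym} to identify the rate function in the form~\eqref{eq:Ilangevin}. The bulk of the work is translating the abstract Hilbertian objects (carré du champ, Sobolev norms, Poisson equation) into explicit differential expressions, paying attention to the degeneracy of the diffusion in the $q$-direction.

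The verification of the assumptions goes as follows. Assumption~\ref{as:regularity} holds because $\Lc_\gamma$ can be written in Hörmander form with $A_i = \sqrt{\gamma}\partial_{p_i}$ and $A_0 = \Lham -\gamma p\cdot\nabla_p$, and the commutators $[A_i, A_0] = \sqrt{\gamma}\partial_{q_i} + \gamma\sqrt{\gamma}\partial_{p_i}$ produce the missing $q$-directions. Assumption~\ref{as:control} is the standard Langevin controllability argument (use the control to drive $p$ so that the effective position velocity realizes a prescribed smooth path from $x$ to $y$; see references cited after Assumption~\ref{as:control}). For Assumption~\ref{as:lyapunov}, the function $W=\e^{\theta H+\varepsilon q\cdot p}$ from Lemma~\ref{lem:lyapunovlangevin} yields $\Ps\geq a|q|^2+b|p|^2-C$, hence $\Ps$ has compact level sets. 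Setting $\mathscr{W}=\sqrt{W}$ and using that $W$ is exponential in a smooth function, a direct computation analogous to the proof of Proposition~\ref{prop:nonlin} (with the role of $V$ played by $\theta H+\varepsilon q\cdot p$) gives the three conditions of~\eqref{eq:restriction}. For $\kap(q,p)=1+|q|^\alpha+|p|^\beta$ with $\alpha,\beta\in[0,2)$, the domination $\kap\ll\Ps$ is immediate from the quadratic lower bound on $\Ps$, while the Lyapunov-like condition~\eqref{eq:Lyapunov_kappaW} follows from the bounds $|\sigma^T\nabla\log\kap|\leq C$ and $\Lc\kap\leq C\kap$, which in turn come from $\alpha,\beta<2$; the $\kap$-weighted Lyapunov condition on the product $\kap W$ follows exactly as in Proposition~\ref{prop:nonlin}.

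Once the framework applies, the ergodicity statement is provided by Proposition~\ref{prop:ergodicity} and the LDP in the $\tau^\kap$-topology by Theorem~\ref{theo:LDP}. To compute the rate function, observe that the SDE~\eqref{eq:langevin} corresponds to $\sigma=\sqrt{2\gamma}(0,I)^T$, so that
\[
S=\frac{\sigma\sigma^T}{2}=\gamma\begin{pmatrix}0 & 0\\ 0 & I\end{pmatrix},\qquad
\mathscr{C}(\varphi,\psi)=\nabla\varphi\cdot S\nabla\psi=\gamma\,\nabla_p\varphi\cdot\nabla_p\psi.
\]
The decomposition~\eqref{eq:LSLA} takes the form $\LS=\gamma\LFD$, $\LA=\Lham$ (both operators admitting $\Cinfc(\X)$ as a core). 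Applying Theorem~\ref{theo:InormH} for $d\nu=\e^v d\mu$ with $v\in\H^1(\nu)$ and $\Lham v\in\H^{-1}(\nu)$ gives $I_\gamma(\nu)=\IS(\nu)+\IA(\nu)$ with
\[
\IS(\nu)=\frac{1}{4}|v|_{\H^1(\nu)}^2=\frac{\gamma}{4}\intX|\nabla_p v|^2\,d\nu.
\]

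For the antisymmetric contribution, Corollary~\ref{cor:Iantisym} gives $\IA(\nu)=\tfrac14\intX\mathscr{C}(\psi_v,\psi_v)\,d\nu=\tfrac{\gamma}{4}\intX|\nabla_p\psi_v|^2\,d\nu$, where $\psi_v\in\H^1(\nu)$ solves the Poisson problem $\widetilde{\nabla}(S\nabla\psi_v)=\Lham v$. Writing $\rho_\nu$ for the Lebesgue density of $\nu$ and using $\nabla\log\rho_\nu=\nabla v-\nabla H$, a short integration by parts shows that for any smooth vector field $F_p$ on $\X$,
\[
\widetilde{\nabla}\bigl(0,F_p\bigr)=-\nabla_p\cdot F_p-F_p\cdot(\nabla_p v-p).
\]
Taking $F_p=\gamma\nabla_p\psi_v$ yields $\widetilde{\nabla}(S\nabla\psi_v)=\gamma\bigl[-\Delta_p\psi_v+(p-\nabla_p v)\cdot\nabla_p\psi_v\bigr]$. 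Setting $\psi=\gamma\psi_v$, the Poisson equation becomes~\eqref{eq:poissonlangevin}, and
\[
\IA(\nu)=\frac{\gamma}{4}\intX|\nabla_p\psi_v|^2\,d\nu=\frac{1}{4\gamma}\intX|\nabla_p\psi|^2\,d\nu,
\]
giving exactly~\eqref{eq:Ilangevin}. Uniqueness of $\psi$ in $\H^1(\nu)$ is inherited from the uniqueness of $\psi_v$, which is built into the definition of the Sobolev space via equivalence classes. The main delicate point throughout is the verification of the full Witten--Lyapunov package for the cross-term Lyapunov function $\e^{\theta H+\varepsilon q\cdot p}$, where one has to propagate the bound $\Ps\gtrsim|q|^2+|p|^2$ through the auxiliary $\mathscr{W}$ and through $\kap W$ without destroying the exponential structure; everything else is routine once the carré du champ has been identified.
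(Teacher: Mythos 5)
Your overall route is exactly the paper's: verify Assumptions~\ref{as:regularity}--\ref{as:lyapunov}, invoke Theorem~\ref{theo:LDP} for the LDP, then Theorem~\ref{theo:InormH} and Corollary~\ref{cor:Iantisym} with $S=\gamma\,\mathrm{diag}(0,\mathrm{I})$. Your identification of the carr\'e du champ as $\gamma\,\nabla_p\varphi\cdot\nabla_p\psi$, the computation $\IS(\nu)=\frac{\gamma}{4}\int|\nabla_p v|^2\,d\nu$, and the reduction of~\eqref{eq:psiv} to~\eqref{eq:poissonlangevin} via $\widetilde{\nabla}(0,F_p)=-\nabla_p\cdot F_p-F_p\cdot(\nabla_p v-p)$ and the rescaling $\psi=\gamma\psi_v$ are correct and are precisely the computation the paper intends (the harmless sign slip in the commutator $[A_i,A_0]$ does not affect the span argument). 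The genuine weak point is your verification of the Witten--Lyapunov package, where the concrete shortcuts you propose fail.

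First, $\mathscr{W}=\sqrt{W}$ does not work: writing $W=\e^{U}$ with $U=\theta H+\varepsilon q\cdot p$, one has $-2\Lc\mathscr{W}/\mathscr{W}=-\Lc U-\tfrac14|\sigma^T\nabla U|^2=-\tfrac{\Lc W}{W}+\tfrac14|\sigma^T\nabla U|^2=\Ps+\tfrac{\gamma}{2}|\theta p+\varepsilon q|^2$, so the third inequality of~\eqref{eq:restriction} is violated by an unbounded amount; one must instead take $\mathscr{W}=\e^{\theta' H+\varepsilon' q\cdot p}$ with $(\theta',\varepsilon')$ sufficiently small relative to $(\theta,\varepsilon)$ and rerun the computation of Lemma~\ref{lem:lyapunovlangevin} to check $\mathscr{W}^2\leq C_1 W$, $\Ps\sim-\Lc\mathscr{W}/\mathscr{W}$ and $-2\Lc\mathscr{W}/\mathscr{W}\leq\Ps+C_2$. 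Second, Proposition~\ref{prop:nonlin} cannot be applied ``with the role of $V$ played by $\theta H+\varepsilon q\cdot p$'': here $\sigma^T\nabla(\theta H+\varepsilon q\cdot p)=\sqrt{2\gamma}\,(\theta p+\varepsilon q)$ vanishes on the unbounded set $\{\theta p=-\varepsilon q\}$, so it does not have compact level sets and~\eqref{eq:nonlin} cannot hold (its left-hand side grows in all directions while $|\sigma^T\nabla(\cdot)|^2$ degenerates); the degeneracy of the noise is exactly why the Langevin case requires a direct argument rather than the elliptic criterion. Third, the claimed bound $\Lc\kap\leq C\kap$ is false in general: $\Lc_\gamma\kap$ contains $-\nabla V\cdot\nabla_p\kap$, of size $|\nabla V(q)|\,|p|^{\beta-1}$, which is not $\mathrm{O}(\kap)$ once $\nabla V$ grows superlinearly (allowed by Assumption~\ref{as:langevin}); hence~\eqref{eq:Lyapunov_kappaW} must be checked directly on $\kap W$, using the full expression of $\Ps$ --- in particular its $\varepsilon\, q\cdot\nabla V$ term --- to absorb the $|\nabla V|$ contribution, and not ``exactly as in Proposition~\ref{prop:nonlin}'', whose proof rests on $\Ps\sim|\sigma^T\nabla V|^2$. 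All three points are fixable along these lines (and they are precisely what Lemma~\ref{lem:lyapunovlangevin} leaves implicit), but as written your verification of Assumption~\ref{as:lyapunov} does not go through.
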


The proof of Theorem~\ref{theo:LDPlangevin} is a direct application of the results
of Sections~\ref{sec:LDP} and~\ref{sec:Donsker}. For the expression of the rate function,
we use~\eqref{eq:psiv} and~\eqref{eq:Lgamma} together with the fact that in this case, the
matrix~$S$ defined in Section~\ref{sec:setting} reads
\[
S = \gamma
\begin{pmatrix}
  0 & 0
  \\
  0 & \mathrm{I}_{d\times d}
\end{pmatrix}\in\R^{2d\times 2 d}.
\]
While~$\kap$ can be chosen independently of the friction~$\gamma$,
it is interesting to note the dependency of the rate function~\eqref{eq:Ilangevin}
with respect to this parameter. We discuss more precisely the scaling of the rate function
with respect to~$\gamma$ in the next section, depending on the form of~$\nu$.

\subsubsection{Low and large friction asymptotics of the rate function}
\label{sec:langevinasymp}
The next corollary shows how the decomposition~\eqref{eq:Ilangevin} allows to
identify the most likely fluctuations in the overdamped and underdamped limits. By this we
mean that, when $\gamma \to 0$ or $\gamma \to +\infty$, most fluctuations become
exponentially rare in~$\gamma$ or~$1/\gamma$, but some of them are associated with rate functions that vanish
as $\gamma\to 0$ and $\gamma\to +\infty$. The expression of these typical fluctuations is
motivated by the discussion on the overdamped and underdamped limits in
Section~\ref{sec:langevinintro}, from which the scalings of the rate function appear natural.
Recall the definition of the marginal in position~$\bar{\mu}$ in~\eqref{eq:mulangevin}.

\begin{corollary}
  \label{cor:position}
  Suppose that the assumptions of Theorem~\ref{theo:LDPlangevin} hold true.
\begin{itemize}
  \item \emph{Overdamped limit~$\gamma \to +\infty$:} Consider a measure
    $\nu\in\mathcal{P}_\kap(\X)$ with $d\nu=\e^v\,d\mu$ equilibrated in the velocity variable,
    \textit{i.e.} such that
  $v(q,p)=v(q)$ with $v\in\H^1(\nu)$ and $p\cdot \nabla_q v \in \H^{-1}(\nu)$. Then, for
  any $\gamma >0$,
  \begin{equation}
    \label{eq:Igamma}
    I_{\gamma}(\nu) =  \frac{1}{4\gamma}\int_{\R^d} |\nabla v(q)|^2\,\bar{\nu}(dq),
  \end{equation}
  where $\bar{\nu} = \e^v \bar{\mu}$.

\item \emph{Hamiltonian limit~$\gamma \to 0$:} Consider a Hamiltonian
  fluctuation, \textit{i.e.} $d\nu=\e^v\,d\mu$ with
  $v(q,p)= g(H(q,p))\in\H^1(\nu)$ for $g\in C^1(\R)$, where~$H$ is defined
  in~\eqref{eq:hamiltonian}. Then, for any $\gamma >0$,
  \begin{equation}
    \label{eq:Igammaham}
    I_{\gamma}(\nu) =\frac{\gamma}{4} \intX \big|p g'\big(H(q,p)\big)\big|^2 \nu(dq\,dp).
  \end{equation}
  \end{itemize}
\end{corollary}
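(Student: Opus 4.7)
The plan is to apply Theorem~\ref{theo:LDPlangevin} directly by substituting the specific forms of $v$ into the rate function~\eqref{eq:Ilangevin} and solving the Poisson equation~\eqref{eq:poissonlangevin} in closed form in each of the two regimes. In both cases the key simplification comes from the fact that either $\nabla_p v$ or $\Lham v$ vanishes, reducing~\eqref{eq:Ilangevin} to a single term.

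For the overdamped limit, $v$ depends only on $q$, so $\nabla_p v = 0$ and the first term of~\eqref{eq:Ilangevin} vanishes identically. A direct computation gives $\Lham v = p \cdot \nabla_q v(q) - \nabla V(q) \cdot \nabla_p v = p \cdot \nabla v(q)$, so the Poisson equation reduces to $-\Delta_p \psi + p \cdot \nabla_p \psi = p \cdot \nabla v(q)$. I would then propose the explicit ansatz $\psi(q,p) = p \cdot \nabla v(q)$, for which $\Delta_p \psi = 0$ and $p \cdot \nabla_p \psi = p \cdot \nabla v(q)$, confirming it is a solution. Since the hypothesis $p \cdot \nabla_q v \in \H^{-1}(\nu)$ together with $v \in \H^1(\nu)$ ensures the well-posedness framework of Corollary~\ref{cor:Iantisym} applies, this $\psi$ is the unique solution in $\H^1(\nu)$. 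It then suffices to plug $|\nabla_p \psi|^2 = |\nabla v(q)|^2$ into~\eqref{eq:Ilangevin} and use the product structure of the invariant measure $\mu = \bar{\mu} \otimes \omega$ to reduce the integral: since $v$ depends only on $q$, the $p$-marginal integrates against the probability measure $\omega$ and leaves $\int_{\R^d} |\nabla v(q)|^2 \e^{v(q)}\bar{\mu}(dq) = \int_{\R^d} |\nabla v(q)|^2 \bar{\nu}(dq)$, yielding~\eqref{eq:Igamma}.

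For the Hamiltonian limit, the key observation is that $\Lham$ preserves the Hamiltonian: a direct computation gives $\Lham H = p \cdot \nabla_q H - \nabla V \cdot \nabla_p H = p \cdot \nabla V - \nabla V \cdot p = 0$. By the chain rule (which applies since $\Lham$ is a first order differential operator), $\Lham v = g'(H)\,\Lham H = 0$, and the Poisson equation~\eqref{eq:poissonlangevin} becomes $-\Delta_p \psi + (p - \nabla_p v)\cdot \nabla_p \psi = 0$. The zero function is a solution in $\H^1(\nu)$, and by the uniqueness statement of Corollary~\ref{cor:Iantisym} it is the only one, so the second term of~\eqref{eq:Ilangevin} vanishes. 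For the remaining term, the chain rule gives $\nabla_p v = g'(H)\,\nabla_p H = g'(H)\,p$, hence $|\nabla_p v|^2 = |p\,g'(H(q,p))|^2$, yielding~\eqref{eq:Igammaham}.

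The only minor technical point — rather than an actual obstacle — is to verify that the candidate solutions $\psi(q,p) = p\cdot\nabla v(q)$ and $\psi \equiv 0$ belong to $\H^1(\nu)$ so that the uniqueness part of Corollary~\ref{cor:Iantisym} applies. The zero function trivially does; for the overdamped ansatz, $|\nabla_p \psi|^2 = |\nabla v|^2$ is $\nu$-integrable as soon as $v \in H^1(\bar{\nu})$, which is implied by $v \in \H^1(\nu)$ under the product structure of $\mu$. Everything else is direct substitution into formulas already proved in Sections~\ref{sec:LDP} and~\ref{sec:Donsker}.
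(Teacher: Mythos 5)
Your argument is correct and follows essentially the same route as the paper: substitute the two special forms of $v$ into~\eqref{eq:Ilangevin}, observe that one of the two terms vanishes, and solve the Poisson equation~\eqref{eq:poissonlangevin} explicitly ($\psi = p\cdot\nabla_q v$ in the overdamped case, $\psi=0$ in the Hamiltonian case, each unique in $\H^1(\nu)$ up to functions of $q$ only). Incidentally, your sign for the overdamped ansatz is the right one — the paper writes $\psi=-p\cdot\nabla_q v$, a harmless slip since only $|\nabla_p\psi|^2=|\nabla_q v|^2$ enters the rate function.

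One small correction to your final paragraph: for the Langevin generator the matrix $S$ is degenerate, so the $\H^1(\nu)$ seminorm is $\gamma\intX|\nabla_p\varphi|^2\,d\nu$ and involves \emph{no} $q$-derivatives; hence $v\in\H^1(\nu)$ is vacuous for $v=v(q)$ and does not imply $\nabla_q v\in L^2(\bar{\nu})$. The correct reason the ansatz $\psi=p\cdot\nabla_q v$ lies in $\H^1(\nu)$ is the hypothesis $\Lham v=p\cdot\nabla_q v\in\H^{-1}(\nu)$ (which by duality with $\H^1(\nu)$ forces $\intX|\nabla_q v|^2\,d\bar{\nu}<+\infty$), exactly the justification the paper gives and which you already invoke for well-posedness; with that fix the proof is complete.
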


The proof is an immediate consequence of~\eqref{eq:Ilangevin}.
\begin{proof}
  Consider first the case where $d\nu = \e^v \,d\mu$ with $v(q,p)=v(q)$. We have
  \[
  \frac{\gamma}{4} \intX  |\nabla_p v|^2\, d\nu = 0.
  \]
  Next,~\eqref{eq:poissonlangevin} becomes
  \[
   -\big(\Delta_p - p\cdot\nabla_p\big)\psi(q,p) =  p\cdot\nabla_q v(q).  
  \]
  The solution to this equation is~$\psi(q,p)= -p\cdot \nabla_q v(q)$ which indeed belongs
  to~$\H^1(\nu)$ since $\Lham v\in \H^{-1}(\nu)$ (in fact we may add to~$\psi$ any function
  depending on~$q$ only but the solutions would be equivalent by definition of the
  space~$\H^1(\nu)$ in Section~\ref{sec:setting}). Plugging this solution
  into~\eqref{eq:Ilangevin} leads to~\eqref{eq:Igamma}.

  Assume now that $v(q,p)= g(H(q,p))$ belongs to~$\H^1(\nu)$ with $g\in C^1(\R)$. It holds
  \[
  \Lham v(q,p) = g'(H(q,p))\Lham H(q,p) = 0.
  \]
  As a result, the solution~$\psi$ to~\eqref{eq:poissonlangevin} is $\psi = 0$
  (again, up to a function of~$q$ only), from which~\eqref{eq:Igammaham} follows
  since~$v\in\H^1(\nu)$.
\end{proof}

Corollary~\ref{cor:position} characterizes the dominant fluctuations in the small and
large friction regimes. In the overdamped limit~$\gamma\to +\infty$ the dominant fluctuations
are in position only, and the rate function is actually that of the limiting overdamped
dynamics~\eqref{eq:Ioverdamped} up to a time rescaling in~$t\mapsto \gamma t$, which is coherent with
the discussion on the overdamped limit in Section~\ref{sec:langevinintro}.
On the other hand, in the Hamiltonian limit~$\gamma\to 0$, the dominant fluctuations
are Hamiltonian, with the inverse time rescaling $t\mapsto \gamma^{-1}t$. This is consistent with
the small temperature limit of Hamiltonian systems~\cite{freidlin1998random}.

Although Corollary~\ref{cor:position} provides interesting information, its structure is quite
rigid. For instance, in the overdamped limit, we consider only position-dependent perturbations,
which is not realistic. We now refine the asymptotics by considering the next order
correction in~$\gamma$ for the perturbation in both regimes, which shows the robustness of the
analysis. In the result stated below, we consider a family of probability measures~$\nu_\gamma$ indexed by~$\gamma>0$,
and simply denote by~$\nu$ the probability measure~$\nu_0$.

\begin{corollary}
  \label{cor:position1}
  Suppose that the assumptions of Theorem~\ref{theo:LDPlangevin} hold true.
  \begin{itemize}
  \item \emph{Overdamped limit~$\gamma \to +\infty$:}    
  Consider the measure
  $\nu_{\gamma}\in\mathcal{P}_\kap(\X)$ defined by~$\nu_\gamma = \e^{v_\gamma}d\mu$ with
  $v_\gamma(q,p)=v(q) + \gamma^{-1} \tilde{v}(q,p)$ where $\Lham v\in \H^{-1}(\nu)$,
  and $\tilde{v}\in \H^1(\nu)$ is bounded and satisfies
  $\nabla_q v \cdot \nabla_p \tilde{v}\in\H^{-1}(\nu)$
  and $\Lham \tilde{v} \in\H^{-1}(\nu)$. Then
  \begin{equation}
    \label{eq:Isymlangevin}
\forall\,\gamma\geq 1, \quad  I_\gamma(\nu_\gamma) = \frac{1}{4\gamma}
\left[ \intX |\nabla_p \tilde{v}|^2\,d\nu
  + \int_{\R^d} |\nabla_q v|^2\,d\bar{\nu}\right]  +\mathrm{O}\left( \frac{1}{\gamma^2} \right),
  \end{equation}
  where $\bar{\nu}=\e^v \bar{\mu}$.

\item \emph{Hamiltonian limit~$\gamma \to 0$:}  
  Consider~$\nu_\gamma= \e^{v_\gamma}d\mu$
  with~$v_\gamma(q,p) = g(H(q,p)) + \gamma \tilde{v}(q,p)$,
  where~$g\in C^1(\R)$, $g(H)\in\H^1(\nu)$, and $\tilde{v}\in \H^1(\nu)$ is bounded and
  satisfies $\Lham \tilde{v}\in \H^{-1}(\nu)$. Then
  \begin{equation}
    \label{eq:Iantisymlangevin}
    \forall\,\gamma\leq 1,\quad
    I_\gamma(\nu_\gamma) = \frac{\gamma}{4}\left[\intX \big|pg'\big(H(q,p)\big)\big|^2 \nu(dq\,dp)
  +\intX |\nabla_p \tilde{\psi}|^2\,d\nu\right] +\mathrm{O}\big(\gamma^2\big),
  \end{equation}
  where~$\tilde{\psi}$ is the unique solution in~$\H^1(\nu)$ to
  \begin{equation}
  \label{eq:psiexpandlangevin}
  - \Delta_p\tilde{\psi}  -  \big(1- g'(H(q,p))\big)p\cdot \nabla_p\tilde{\psi} = \Lham \tilde{v}.
  \end{equation}
\end{itemize}
\end{corollary}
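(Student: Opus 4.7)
The plan is to insert the ansatz $v_\gamma$ into the rate function formula~\eqref{eq:Ilangevin} and expand everything in the small parameter. In both cases, the density $d\nu_\gamma/d\nu = \e^{\gamma^{-1}\tilde v}$ (resp.\ $\e^{\gamma\tilde v}$) is uniformly equal to $1 + O(\gamma^{-1})$ (resp.\ $O(\gamma)$) because $\tilde v$ is bounded, so any $\nu$-integrable quantity can be integrated against $\nu_\gamma$ at the price of a multiplicative correction of this order. Using this, the symmetric part of the rate function is straightforward to expand, and the crux is to expand the solution $\psi_\gamma$ of the Poisson equation~\eqref{eq:poissonlangevin} and control $\intX|\nabla_p\psi_\gamma|^2\,d\nu_\gamma$.

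For the overdamped limit, I would set $\psi_\gamma = \psi_0 + \gamma^{-1}R_\gamma$ with $\psi_0(q,p) = p\cdot\nabla_q v(q)$ satisfying the leading equation $-\Delta_p\psi_0 + p\cdot\nabla_p\psi_0 = p\cdot\nabla_q v$ (which uses $\Lham v_\gamma = p\cdot\nabla_q v + \gamma^{-1}\Lham\tilde v$ and $\nabla_p v_\gamma = \gamma^{-1}\nabla_p\tilde v$). Then $R_\gamma$ solves a Poisson equation with the same leading elliptic structure, a drift perturbed by $-\gamma^{-1}\nabla_p\tilde v\cdot\nabla_p$, and right-hand side $\Lham\tilde v + \nabla_p\tilde v\cdot\nabla_q v$. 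Granting a uniform-in-$\gamma$ bound on $R_\gamma$ in $\H^1(\nu)$, one obtains $\intX|\nabla_p\psi_\gamma|^2\,d\nu_\gamma = \intX|\nabla_p\psi_0|^2\,d\nu + O(\gamma^{-1})$, and using that $\nu$ factorizes as $\bar\nu\otimes\omega$ since $v$ depends only on $q$, this reduces to $\int_{\R^d}|\nabla_q v|^2\,d\bar\nu$. Combining with the symmetric contribution $\frac{1}{4\gamma}\intX|\nabla_p\tilde v|^2\,d\nu+O(\gamma^{-2})$ yields~\eqref{eq:Isymlangevin}.

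For the Hamiltonian limit, the computation is dual: since $\Lham(g(H))=g'(H)\Lham H = 0$, one has $\Lham v_\gamma = \gamma\Lham\tilde v$ and $\nabla_p v_\gamma = pg'(H)+\gamma\nabla_p\tilde v$, so the natural ansatz is $\psi_\gamma = \gamma\tilde\psi + \gamma^2 R_\gamma$. Substituting into~\eqref{eq:poissonlangevin} and dividing by $\gamma$, the leading-order equation is precisely~\eqref{eq:psiexpandlangevin}, while $R_\gamma$ solves a similar Poisson equation with source a bounded combination of $\Lham\tilde v$ and $\nabla_p\tilde v\cdot\nabla_p\tilde\psi$. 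A uniform $\H^1(\nu)$-bound on $R_\gamma$ then gives $\frac{1}{4\gamma}\intX|\nabla_p\psi_\gamma|^2\,d\nu_\gamma = \frac{\gamma}{4}\intX|\nabla_p\tilde\psi|^2\,d\nu + O(\gamma^2)$, while expanding the square in the symmetric term and using $\nu_\gamma \to \nu$ yields $\frac{\gamma}{4}\intX|\nabla_p v_\gamma|^2\,d\nu_\gamma = \frac{\gamma}{4}\intX|pg'(H)|^2\,d\nu + O(\gamma^2)$, which together give~\eqref{eq:Iantisymlangevin}.

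The main technical obstacle is establishing the uniform-in-$\gamma$ well-posedness of the Poisson equations governing $R_\gamma$ (and of~\eqref{eq:psiexpandlangevin} for $\tilde\psi$). These equations share the general form~\eqref{eq:poissonlangevin} with a slightly modified leading drift and a small perturbation proportional to $\gamma^{\pm 1}\nabla_p\tilde v\cdot\nabla_p$; coercivity of the associated bilinear form on $\H^1(\nu)$ is of the same nature as in the proof of Corollary~\ref{cor:Iantisym} (via the computation of the adjoint $\widetilde\nabla$ of $\nabla_p$ in $L^2(\nu)$), and the $\gamma$-dependent term can be absorbed using the smallness prefactor together with the hypothesis that $\tilde v\in\H^1(\nu)$. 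Once this uniform control is secured, the asymptotic expansion of $|\nabla_p\psi_\gamma|^2$ and the $O(\gamma^{\pm 1})$ change of measure combine to produce the remainders $O(\gamma^{\pm 2})$ stated in the corollary.
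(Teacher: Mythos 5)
Your overall route coincides with the paper's: insert the ansatz into~\eqref{eq:Ilangevin}, expand the symmetric term using the boundedness of~$\tilde v$, identify the leading-order solution of~\eqref{eq:poissonlangevin} ($p\cdot\nabla_q v$ in the overdamped regime, $\gamma\tilde\psi$ with~\eqref{eq:psiexpandlangevin} in the Hamiltonian regime), and reduce everything to a uniform-in-$\gamma$ $\H^1(\nu)$ bound on the remainder of the Poisson solution; your residual equations and source terms are exactly those of the published proof. The step that does not hold up as written is precisely that uniform bound. You propose to view the drift contribution $\gamma^{\mp 1}\nabla_p\tilde v\cdot\nabla_p$ as a small perturbation of a coercive bilinear form on $\H^1(\nu)$ and to absorb it ``using the smallness prefactor together with $\tilde v\in\H^1(\nu)$''. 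But the term you must absorb is of the form $\gamma^{\mp 1}\intX (\nabla_p\tilde v\cdot\nabla_p R_\gamma)\,R_\gamma\,d\nu$, and with only $\tilde v$ bounded and $\nabla_p\tilde v\in L^2(\nu)$ this quantity cannot be dominated by $\varepsilon\intX|\nabla_p R_\gamma|^2\,d\nu$ plus controlled terms: Cauchy--Schwarz leaves you needing an $L^\infty$ control on $R_\gamma$ or on $\nabla_p\tilde v$, neither of which is among the hypotheses, and the prefactor is moreover only bounded by one (not arbitrarily small) on the whole range $\gamma\geq 1$ (resp.\ $\gamma\leq 1$) over which the statement is claimed.

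The paper avoids this entirely by not splitting the operator. Since $\nabla_p v=0$ (resp.\ $v=g(H)$), the full drift $p-\nabla_p v_\gamma$ appearing in~\eqref{eq:poissonlangevin} makes the operator exactly the symmetric operator $\widetilde{\nabla}(S\nabla\,\cdot\,)$ associated with the measure $\nu_\gamma$ itself. Testing the residual equation against $\psi_\gamma-\psi$ and integrating with respect to $\nu_\gamma$ (not $\nu$) therefore produces exactly $\intX|\nabla_p(\psi_\gamma-\psi)|^2\,d\nu_\gamma$ on the left-hand side, with no perturbative term to absorb; the right-hand side is estimated by the $\H^1(\nu_\gamma)$--$\H^{-1}(\nu_\gamma)$ duality, and the boundedness of $\tilde v$ gives a two-sided comparison, uniform in the relevant range of $\gamma$, between the $\H^{\pm 1}(\nu_\gamma)$ and $\H^{\pm 1}(\nu)$ seminorms. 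This yields $|\psi_\gamma-\psi|_{\H^1(\nu)}\leq C\gamma^{-1}\,|\Lham\tilde v+\nabla_q v\cdot\nabla_p\tilde v|_{\H^{-1}(\nu)}$ (and its analogue as $\gamma\to 0$) with $C$ independent of $\gamma$, under exactly the stated hypotheses. The same structural remark gives the well-posedness of~\eqref{eq:psiexpandlangevin}, since its operator is the one symmetric with respect to $\nu=\e^{g(H)}\,d\mu$. If you replace your absorption argument by this device, the rest of your expansion, including the change of measure and the treatment of the cross terms in the symmetric part, goes through and reproduces the paper's proof.
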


We believe that it is also instructive to mention the relation between the rate
function~\eqref{eq:Ilangevin} and the asymptotic variance of the Langevin dynamics.
Indeed, when considering small perturbations of the invariant measure,
Corollary~\ref{cor:position1} shows that
\begin{equation}
  \label{eq:Igammascale}
I_\gamma \sim \mathrm{min}\left( \gamma,\frac{1}{\gamma}\right).
\end{equation}
On the other hand, the resolvent estimates
in~\cite[Section~2.1]{leimkuhler2016computation}
and~\cite{GS16,hairer2008ballistic,iacobucci2019convergence}
show that the asymptotic variance~$\sigma_\gamma^2$ scales like
\begin{equation}
  \label{eq:sigmagammascale}
\sigma_\gamma^2 \sim \mathrm{max}\left( \gamma,\frac{1}{\gamma}\right).
\end{equation}
Since we expect the asymptotic variance to be the inverse of the rate function around
the invariant measure~\cite{den2008large,rey2015irreversible},
the scalings~\eqref{eq:Igammascale} and~\eqref{eq:sigmagammascale} are consistent.
However, as~\eqref{eq:Ilangevin} suggests, this scaling is no longer true for general
fluctuations. We now present the proof of Corollary~\ref{cor:position1}.
\begin{proof}
  We first consider the overdamped limit $\gamma\to +\infty$. Since~$\tilde{v}$
  is bounded we have, for any~$\gamma\geq 1$ and~$\psi\in\H^1(\nu_\gamma)$,
  \begin{equation}
    \label{eq:equivnorms}
    \e^{\frac{\inf \tilde{v}}{\gamma}}|\psi|_{\H^1(\nu)}^2
  \leq    |\psi|_{\H^1(\nu_\gamma)}^2\leq  \e^{\frac{\sup \tilde{v}}{\gamma}}|\psi|_{\H^1(\nu)}^2.
  \end{equation}
  Thus, the norms~$\H^1(\nu_\gamma)$ and~$\H^1(\nu)$ are equivalent for any fixed~$\gamma\geq 1$,
  and the functions of~$\H^1(\nu_\gamma)$ and~$\H^1(\nu)$ coincide
  (we repeatedly use this fact below, and we will use a similar argument
  when $\gamma \leq 1$). A similar conclusion holds for the corresponding
  dual norms. This consequence of the boundedness of~$\tilde{v}$ makes the analysis simpler.

  Recall that we consider~$v_\gamma(q,p) = v(q) + \gamma^{-1}\tilde{v}(q,p)$ in the overdamped
  limit. The symmetric part of the rate function is easily computed since~$v$ only depends on the
  position variable, namely
  \[
  \IS(\nu_\gamma)= \frac{\gamma}{4}\intX\big|\nabla_p (v + \gamma^{-1}\tilde{v})\big|^2\,
  \e^{v+\frac{\tilde{v}}{\gamma}}
  d\mu = \frac{1}{4\gamma} \intX |\nabla_p\tilde{v}|^2\,d\nu
  + \mathrm{O}\left(\frac{1}{\gamma^2}\right),
  \]
  where we used that~$\tilde{v}$ belongs to~$\H^1(\nu)$ and is bounded to expand the exponential.
  For the antisymmetric part, by~\eqref{eq:poissonlangevin}, we have to consider the
  solution~$\psig\in\H^1(\nu_\gamma)$ to
  \[
  -\Delta_p\psig + \left(p-\frac{1}{\gamma}\nabla_p \tilde{v}\right)\cdot \nabla_p\psig
  =  \Lham v_\gamma.
  \]
  Corollary~\ref{cor:position} suggests that at leading order in~$\gamma$ it holds
  $\psig = \psi + \mathrm{O}(\gamma^{-1})$ where $\psi(q,p) = p\cdot \nabla v(q)$.
  In order to make this idea more precise we compute
  \[
  \left(-\Delta_p + \left(p-\frac{1}{\gamma}\nabla_p \tilde{v}\right)\cdot
  \nabla_p\right)(\psig - \psi)
  = \frac{1}{\gamma}\big(\Lham \tilde{v} + \nabla_q v\cdot \nabla_p \tilde{v}\big).
  \]
  In what follows, we denote by $u=\Lham \tilde{v} + \nabla_q v\cdot \nabla_p \tilde{v}$ the right
  hand side of the above equation.
  Since~$\nabla_q v\cdot \nabla_p \tilde{v}\in\H^{-1}(\nu_\gamma)$
  and $\Lham \tilde{v}\in \H^{-1}(\nu_\gamma)$ by assumption, it holds~$u\in\H^{-1}(\nu_\gamma)$.
  Thus, multiplying by $\psig - \psi$ and integrating with respect to~$\nu_\gamma$ we obtain
  \[
  \intX \big|\nabla_p(\psig - \psi)\big|^2\,d\nu_\gamma = -\frac{1}{\gamma}
  \intX (\psig - \psi)u\,d\nu_\gamma.
  \]
  Using the duality between~$\H^1(\nu_\gamma)$ and~$\H^{-1}(\nu_\gamma)$
  (see~\cite[Section~2.2 Claim~F]{komorowski2012fluctuations}) and~\eqref{eq:equivnorms} we find
  \[
  \forall\,\gamma \geq 1,\quad
  |\psig - \psi|_{\H^1(\nu)}\leq \frac{C}{\gamma}
  | u |_{\H^{-1}(\nu)},
  \]
  where~$C$ is some constant independent of~$\gamma$.
  This shows that~$\psig = \psi + \gamma^{-1}\tilde{\psi}_\gamma$ with
  $|\tilde{\psi}_\gamma|_{\H^1(\nu)}\leq C'$ for a constant~$C'>0$
  and all~$\gamma\geq 1$. Plugging
  this estimate into~\eqref{eq:Ilangevin} and using that~$\nabla_p \psi = \nabla_q v$, we
  obtain the second term on the right hand side of~\eqref{eq:Isymlangevin}.

  The arguments to prove the limit~$\gamma\to 0$ follow a similar path, so we only sketch
  the proof. First, the boundedness of~$\tilde{v}$ allows again to compare the Sobolev norms
  associated with~$\nu$ and~$\nu_\gamma$ for any $\gamma\leq 1$ (by writting the
  counterpart of~\eqref{eq:equivnorms} in this regime).
  The first term on the right hand side of~\eqref{eq:Iantisymlangevin} is easily
  obtained as in Corollary~\ref{cor:position} using that~$g(H)\in \H^1(\nu)$ and~$\tilde{v}$
  is bounded. Concerning the antisymmetric part,~\eqref{eq:poissonlangevin} now reads
  \[
  \big(-\Delta_p + ( p - \nabla_p v_{\gamma})\cdot \nabla_p\big) \psig = \gamma \Lham \tilde{v},
  \]
  since~$\Lham g(H(q,p))=0$.
  Because of the scaling in~$\gamma$ on the right hand side of the above equation, 
  the solution~$\psig$ can be expanded as~$\psig = \gamma \tilde{\psi} + \mathrm{O}(\gamma^2)$
  in~$\H^1(\nu)$, where~$\tilde{\psi}$ is solution to
  \[
  -\Delta_p \tilde{\psi} + \big( 1 - g'(H(q,p))\big)p\cdot \nabla_p \tilde{\psi} = \Lham \tilde{v}.
  \]
  This reasoning can be made rigorous by a precise asymptotic analysis as above.
  Plugging this expansion into~\eqref{eq:Ilangevin}
  provides the second term on the right hand side of~\eqref{eq:Iantisymlangevin}.
\end{proof}

\section{Conclusion and perspectives}
\label{sec:discussion}

The goal of this paper was twofold. Our first aim was to provide, given a diffusion process,
a precise class of unbounded functions for which a large deviations principle holds.
This question is answered in Section~\ref{sec:LDP} were we prove a LDP for the empirical
measure in a topology associated with unbounded functions, in relation with
a Witten--Lyapunov condition. In particular, a comparison with Cram\'{e}r's condition for
independent variables shows the effect of correlations on the stability of the SDE at hand.
These results extend in several
directions and refine results from previous works~\cite{wu2001large,kontoyiannis2005large}.
However, the necessity of our Lyapunov condition for a LDP to hold
is still an open problem -- whereas the necessity of a similar condition
is known for the Sanov theorem~\cite{wang2010sanov}. Our second concern was to
provide finer expressions of the rate function governing the LDP, in particular in order to
study Langevin dynamics which appear for instance in molecular simulation. We answer to this
question in two ways in Section~\ref{sec:Donsker}.
We first provide an alternative variational formula for the rate function in
Section~\ref{sec:DV}, which gives as
a by-product a very general representation formula for the principal eigenvalue of second order
differential operators, without symmetry assumption. This extends the important work
of Donsker and Varadhan~\cite{donsker1975variational} in an unbounded setting. In
Section~\ref{sec:irrever}, we show a general decomposition of the rate function into symmetric
and antisymmetric parts of the dynamics based on the computations in~\cite{bodineau2008large}.
Interestingly, the proof of the result relies on a Witten-like transform in the above mentioned
variational representation of the rate function.
These results allow us to describe precisely the rate function
of an irreversible overdamped Langevin dynamics in Section~\ref{sec:overdamped}, revisiting
results from~\cite{rey2015irreversible} in an unbounded setting. More interestingly
we provide in Section~\ref{sec:langevin}, for Langevin dynamics, 
asymptotics of the rate function for the overdamped and the underdamped limits. We thus
characterize the most likely fluctuations in both regimes with a natural physical interpretation.
Considering piecewise deterministic
processes~\cite{bierkens2018piecewise,durmus2018geometric,durmus2018piecewise}
(which lack regularity) instead of the Langevin dynamics is also an interesting problem.

We would like to mention several interesting directions for
future works. A first natural issue is to rephrase our results in the optimal control
framework developed \textit{e.g.} in~\cite{boue1998variational,dupuis2011weak,dupuis2018large}.
This is particularly interesting for numerical purposes, since the optimal control representation
can be learned on the fly with stochastic approximation
methods~\cite{borkar2003peformance,basu2008learning,benveniste2012adaptive,ferre2018adaptive}.
We believe that such results can be obtained by harvesting the contraction principle provided
by Corollary~\ref{cor:contract}.

On a more theoretical ground, dual Sobolev norms have recently attracted attention in
the optimal control community due to the so-called optimal matching
problem, see for instance~\cite{ledoux2017optimal,ledouxfluctuation} and references therein.
With these works in mind, the dual Sobolev norm in the antisymmetric part of the rate function
described in Section~\ref{sec:irrever} could be interpreted as an infinitesimal transport cost
related to the antisymmetric part of the dynamics, which is an alluring interpretation of
irreversibility. Note that the relations between optimal transport and large deviations theory
have a fruitful history, see \textit{e.g.}~\cite{gozlan2007large}.

It has been known for some time in the physics literature that the empirical density
of a diffusion may not contain enough information to describe its fluctuations
in an irreversible regime. It is actually more relevant to consider the fluctuations of both
the empirical density and current, a procedure sometimes called level~2.5 large
deviations~\cite{chetrite2015variational,barato2015formal}. This framework
can be used to provide a clear description of the rate function of irreversible dynamics.
As shown in~\cite{barato2015formal}, such large deviations results can be derived by
Krein--Rutman arguments like those used in the present paper. Therefore, we believe that
our results can be extended to prove level~2.5 large deviations principles and characterize
precisely the class of admissible currents.

Finally, it is important to understand the behaviour of observables which are not covered
by our analysis. It has been recently shown~\cite{nickelsen2018anomalous} in the case
of the Ornstein--Uhlenbeck process that observables growing too fast at infinity with
respect to the confinement are characterized by a \emph{heavy tail} behaviour. This leads to
a level~1 large deviations principle at an anomalous speed with a localization
in time of the fluctuation, and the Krein--Rutman
strategy developped in the present paper does not apply. We therefore believe
there are several interesting open questions in this direction.

\subsection*{Acknowledgments}
The authors warmfully thank Hugo Touchette for reading an early version of the manuscript
as well as the first preprint, and providing useful comments; as well as the referees, whose suggestions helped us
making more precise various aspects of this work. The authors are grateful to
Ofer Zeitouni for an interesting discussion about scalings in large deviations theory, as well as
to Jianfeng Lu for pointing out the work~\cite{bodineau2008large}. We also thank
Julien Reygner for general discussions on
large deviations. The PhD of Grégoire Ferré was supported by the Labex Bézout ANR-10-LABX-58-01.
The work of Gabriel Stoltz was funded in part by the Agence Nationale de la Recherche, under
grant ANR-14-CE23-0012 (COSMOS), and by the European Research Council under the European
Union’s Seventh Framework Programme (FP/2007-2013)/ERC Grant Agreement number 614492.
We also benefited from the scientific environment of the Laboratoire International
Associé between the Centre National de la Recherche Scientifique and the University of Illinois at
Urbana-Champaign.
\section{Proofs}
\label{sec:proofs}

In all the proofs below, for conciseness,
we write $\E_x,\mathbb{P}_x$, etc, with some abuse of notation, to indicate that the expectations we consider
are taken with respect to all realizations of the dynamics~\eqref{eq:SDE} started
from~$X_0=x$; and do not indicate explicitly the dependence of~$X_t$ on~$x$, in
contrast to the convention used in Section~\ref{sec:LDP}. 

\subsection{Proof of the large deviations principle}
\label{sec:proofLDP}
As mentioned after Theorem~\ref{theo:LDP}, our proof relies on the Gärtner--Ellis
theorem~\cite{dembo2010large}, for which we need several preliminary results. The key object
is the functional
\[
f\in\Binfty_\kap(\X)\ \mapsto \ \lambda(f):=\underset{t\to + \infty}{\lim}\,
    \frac{1}{t} \log \E_x\left[ \e^{\int_0^t f(X_s)\, ds} \right].
\]
Roughly speaking, the Gärtner--Ellis theorem (Theorem~\ref{theo:GE} in Appendix~\ref{sec:tools})
states that if this functional is finite
and Gateau-differentiable over~$\Binfty_\kap(\X)$ and~$(L_t)_{t\geq 0}$ defined
in~\eqref{eq:Lt} is exponentially tight for the $\tau^\kap$-topology, then~$(L_t)_{t\geq 0}$
satisfies a LDP in the dual space of~$\Binfty_\kap(\X)$. A reminder of this
theorem and some elements of analysis are given in Appendix~\ref{sec:tools}.

However, studying the range of functions~$f$ for which the functional~$\lambda$ is finite and
Gateau-differentiable is not an easy task. Formally, our strategy is to prove that~$r(f)$, the
element of the spectrum of the operator~$\Lc +f$ with the largest modulus, is a real eigenvalue
for any function $f\in\Binfty_\kap(\X)$, and to show that it is actually equal to the cumulant
function~$\lambda(f)$ defined in~\eqref{eq:SCGF}.
This amounts to showing the well-posedness and regularity of a family of spectral problems.
For this, we use several ideas from~\cite{ferre2018more}, which shows that under Lyapunov and
irreducibility conditions, the eigenvalue problem to which~$\lambda$ is associated is well
defined. In order to avoid technical difficulties related to unbounded operators, we study
the semigroup~$(P_t^f)_{t\geq 0}$ rather than its generator~$\Lc +f$, see Remark~\ref{rem:domain}
below for more details.
The seminal paper by Gärtner~\cite[Section~3]{gartner1977large}
provides useful technical tools, as well as~\cite{ellis1984large,wu2001large}.

In all of this section, we suppose that Assumptions~\ref{as:regularity},~\ref{as:control}
and~\ref{as:lyapunov} hold true and consider a function $\kap:\X\to [1,+\infty)$ of
  class~$\mathscr{S}$ as in
  Assumption~\ref{as:lyapunov}, \textit{i.e.} such that $\kap\ll \Ps$ and either~$\kap$
  is bounded or has compact level sets and satisfies~\eqref{eq:Lyapunov_kappaW}. We
repeatedly use that $\kap\ll - \frac{\Lc \mathscr{W}}{\mathscr{W}}$ in view
of~\eqref{eq:restriction}. We start with important properties of key martingales that 
appear regularly in the proofs of the required technical results.

\begin{lemma}
  \label{lem:martingales}
  If $(X_t)_{t\geq 0}$ is a solution to~\eqref{eq:SDE}, then the stochastic processes defined by
  \begin{equation}
    \label{eq:martingales}
    M_t = W(X_t)\, \e^{-\int_0^t \frac{\Lc W}{W}(X_s)\,ds}     \quad \mathrm{and} \quad
    \Mm_t = \mathscr{W}(X_t)\, \e^{-\int_0^t \frac{\Lc \mathscr{W}}{\mathscr{W}}(X_s)\,ds}
  \end{equation}
  are continuous non-negative local martingales, hence supermartingales. Moreover, it
  holds almost surely
  \begin{equation}
    \label{eq:martingalebound}
  \mathscr{M}_t^2 \leq C_1\,\e^{t C_2 } M_t,
  \end{equation}
  where $C_1>0$ and $C_2\in\R$ are the constants from Assumption~\ref{as:lyapunov}.
\end{lemma}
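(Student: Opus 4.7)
The plan is to apply Itô's formula combined with the integration-by-parts rule, a standard construction underlying Feynman--Kac type identities. Since~$W \in C^2(\X)$ (Assumption~\ref{as:lyapunov}) and~$(X_t)_{t\geq 0}$ is a global strong solution by Proposition~\ref{prop:ergodicity}, Itô's formula gives
\[
dW(X_t) = \Lc W(X_t)\,dt + \big(\nabla W(X_t)\big)^T \sigma(X_t)\,dB_t.
\]
Setting $A_t = \exp\big(-\int_0^t \frac{\Lc W}{W}(X_s)\,ds\big)$, this process is well-defined and of finite variation because $\frac{\Lc W}{W}$ is continuous (as $W \geq 1 > 0$ and $\Lc W$ is continuous) and $s \mapsto X_s$ is almost surely continuous, so the integrand is almost surely bounded on any $[0,t]$. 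The product rule (with no cross-variation, since $A_t$ has finite variation) gives
\[
d M_t = A_t\,dW(X_t) + W(X_t)\,dA_t = A_t \big(\nabla W(X_t)\big)^T \sigma(X_t)\,dB_t,
\]
the drift terms cancelling exactly. Hence~$M_t$ is a continuous local martingale; since $W\geq 1$ and $A_t > 0$, it is non-negative, and Fatou's lemma upgrades it to a supermartingale. The very same reasoning applied to the $C^2$ function~$\mathscr{W}$ (which also satisfies~$\mathscr{W}\geq 1$ by Assumption~\ref{as:lyapunov}) yields the analogous statement for~$\Mm_t$.

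For the pointwise bound~\eqref{eq:martingalebound}, I would simply inject the inequalities from~\eqref{eq:restriction}. Writing
\[
\Mm_t^2 = \mathscr{W}(X_t)^2\,\exp\!\left(-2\int_0^t \frac{\Lc \mathscr{W}}{\mathscr{W}}(X_s)\,ds\right),
\]
the bound $\mathscr{W}^2 \leq C_1 W$ controls the prefactor, while the inequality $-2\frac{\Lc \mathscr{W}}{\mathscr{W}} \leq -\frac{\Lc W}{W} + C_2$ controls the exponent, giving
\[
\Mm_t^2 \leq C_1\,W(X_t)\,\exp\!\left(-\int_0^t \frac{\Lc W}{W}(X_s)\,ds + t C_2\right) = C_1\,\e^{t C_2}\,M_t
\]
almost surely, as claimed.

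The main (minor) obstacle is really just the justification that the deterministic integrals in the definitions of $M_t$ and $\Mm_t$ are almost surely finite on bounded time intervals, which follows from pathwise continuity of $s \mapsto X_s$ and continuity of $\frac{\Lc W}{W}$ and $\frac{\Lc \mathscr{W}}{\mathscr{W}}$ (the denominators being bounded below by~$1$). Everything else is either Itô's formula applied to a $C^2$ function, or an elementary consequence of the Lyapunov estimates from Assumption~\ref{as:lyapunov}.
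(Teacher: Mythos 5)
Your proof is correct and follows essentially the same route as the paper: Itô's formula (with the finite-variation exponential factor) shows that the drift cancels so $M_t$ and $\Mm_t$ are continuous non-negative local martingales, hence supermartingales by Fatou, and the pointwise bound~\eqref{eq:martingalebound} is obtained exactly as in the paper by combining $\mathscr{W}^2 \leq C_1 W$ with $-2\frac{\Lc \mathscr{W}}{\mathscr{W}} \leq -\frac{\Lc W}{W} + C_2$ from~\eqref{eq:restriction}. The only difference is that you spell out the product-rule cancellation and the finiteness of the time integrals, which the paper leaves implicit.
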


\begin{proof}
  First, Itô formula gives
  \[
  dM_t =   \e^{-\int_0^t \frac{\Lc W}{W}(X_s)\,ds}\nabla W(X_t)\cdot\sigma(X_t) \,dB_t.
  \]
  Since~$W$ is $C^2(\X)$ and~$\sigma$ is continuous,~$M_t$ is a continuous local
  martingale~\cite{karatzas2012brownian}. Since it is non-negative, it is a supermartingale
  by Fatou's lemma, and the same conclusion holds for~$\Mm_t$.
  On the other hand,~\eqref{eq:restriction} shows that 
  \[
  \mathscr{M}_t^2 = \mathscr{W}(X_t)^2\, \e^{\int_0^t - 2 \frac{\Lc \mathscr{W}}{\mathscr{W}}
    (X_s)\,ds}\leq C_1W(X_t)\, \exp\left[\int_0^t \left(- \frac{\Lc W}{W} 
    (X_s)+C_2\right)ds\right]\leq C_1\, \e^{C_2t}M_t,
  \]
  which concludes the proof.
\end{proof}

The use of the martingale~$M_t$ is inspired by~\cite{wu2001large} where it is considered to
control return times to compact sets. Here, it allows to define the Feynman--Kac semigroup
associated with the dynamics~$(X_t)_{t\geq 0}$ with weight function $f\in\Binfty_\kap(\X)$.

\begin{lemma}
  \label{lem:feynmankac}
  Fix $f\in\Binfty_\kap(\X)$. For any $t\geq 0$, the Feynman--Kac operator 
  \begin{equation}
    \label{eq:feynmankac}
    \forall\, \varphi \in\Binfty_W(\X),
    \qquad 
    \big(P_t^f\varphi\big)(x) := \E_x\left[ \varphi(X_t)\,\e^{\int_0^t f(X_s)\,ds}\right],
  \end{equation}
  is well defined. Moreover, $(P_t^f)_{t\geq 0}$ is
  a semigroup of bounded operators on~$\Binfty_W(\X)$.
  Finally, for any $t>0$ and any $a>0$, there exist
  $c_{a,t}\geq 0$ and a compact subset $K_{a,t}\subset\X$ such that
  \begin{equation}
    \label{eq:compactness}
    \forall\, x\in\X,\quad
    \big(P_t^f W\big)(x) \leq \e^{-a t} W(x) + c_{a,t}\ind_{K_{a,t}}(x).
  \end{equation}
\end{lemma}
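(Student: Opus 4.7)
My approach has three parts: establishing a pointwise and $\Binfty_W$-bound on $P_t^f$ from the supermartingale $M_t$, deriving a Dynkin-type integral identity for $t\mapsto P_t^f W$, and controlling the residual term in it by means of the second supermartingale $\mathscr{M}_t$. The unifying tool, from Lemma~\ref{lem:martingales}, is the identity
\[
W(X_t)\,\e^{\int_0^t f(X_s)\,ds} = M_t\,\e^{\int_0^t (f-\Ps)(X_s)\,ds},
\]
together with its analogue obtained by replacing $(W,M_t,\Ps)$ by $(\mathscr{W},\mathscr{M}_t,-\Lc\mathscr{W}/\mathscr{W})$. Since~$\Ps$ is continuous with compact sublevel sets on $\X=\R^d$ it is bounded below, and $\kap\ll\Ps$ together with $|f|\leq\|f\|_{\Binfty_\kap}\kap$ yields $f\leq\epsilon\Ps + C_\epsilon$ for any $\epsilon>0$, so that $f-\Ps$ is bounded above by some constant $C_f$. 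The same holds for $f+\Lc\mathscr{W}/\mathscr{W}$ since $-\Lc\mathscr{W}/\mathscr{W}\sim\Ps$ by~\eqref{eq:restriction}.

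Combined with the supermartingale inequality $\E_x[M_t]\leq W(x)$, this gives $\E_x[W(X_t)\e^{\int_0^t f(X_s)\,ds}]\leq\e^{C_f t}W(x)$; consequently, for any $\varphi\in\Binfty_W(\X)$,
\[
|P_t^f\varphi(x)|\leq \|\varphi\|_{\Binfty_W}\,\e^{C_f t}\,W(x),
\]
which establishes well-definedness of $P_t^f$ on $\Binfty_W(\X)$ with $\|P_t^f\|_{\mathcal{B}(\Binfty_W)}\leq \e^{C_f t}$. The semigroup identity $P_{s+t}^f=P_s^f P_t^f$ then follows from the Markov property and the additivity of the Feynman--Kac exponent, the above bound ensuring all conditional expectations are finite.

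For the drift estimate~\eqref{eq:compactness}, I apply Itô's formula to $Z_t=W(X_t)\e^{\int_0^t f(X_s)\,ds}$ to obtain $dZ_t=(f-\Ps)(X_t)Z_t\,dt+dN_t$ with $N_t$ a local martingale. The uniform $W$-bound just proved, together with a standard localization by $\tau_n=\inf\{t:W(X_t)\geq n\}$ and dominated convergence, promotes $N_t$ to a true martingale and yields $P_t^f W(x)=W(x)+\int_0^t P_s^f[(f-\Ps)W](x)\,ds$. Since $\Ps-f$ has compact sublevel sets, for any $a>0$ there exist a compact set~$K_a$ and a constant $D_a<+\infty$ with $(f-\Ps)W\leq -2a W+D_a\ind_{K_a}$; Grönwall's lemma then delivers
\[
P_t^f W(x)\leq \e^{-2at}W(x) + D_a\int_0^t \e^{-2a(t-s)} P_s^f\ind_{K_a}(x)\,ds.
\]
To absorb the integral I use $\ind_{K_a}\leq \mathscr{W}/\inf_{K_a}\mathscr{W}$ together with the analogue of the central identity for $\mathscr{W}$, which, combined with $\E_x[\mathscr{M}_s]\leq\mathscr{W}(x)$ and the bound on $f+\Lc\mathscr{W}/\mathscr{W}$, gives $P_s^f\ind_{K_a}(x)\leq c_a\,\e^{C'_f s}\,\mathscr{W}(x)\leq c'_a\,\e^{C'_f s}\,\sqrt{W(x)}$ thanks to $\mathscr{W}^2\leq C_1 W$. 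Substituting back yields $P_t^f W(x)\leq \e^{-2at}W(x) + \tilde c_{a,t}\sqrt{W(x)}$ for some finite $\tilde c_{a,t}$. Because $\sqrt{W(x)}/W(x)\to 0$ outside sublevel sets of~$W$, I can choose a compact $K_{a,t}=\{W\leq M_{a,t}\}$ outside of which $\tilde c_{a,t}\sqrt{W(x)}\leq(\e^{-at}-\e^{-2at})W(x)$, while inside $K_{a,t}$ the $\sqrt{W}$-term is bounded; renaming $2a$ into $a$ produces the stated form.

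The main obstacle is this last step, obtaining control of $P_s^f\ind_{K_a}(x)$ by a function growing strictly slower than $W(x)$ at infinity. This is precisely the role of the auxiliary function $\mathscr{W}$ and of the three conditions~\eqref{eq:restriction}: the growth control $\mathscr{W}^2\leq C_1 W$ provides the sublinear $\sqrt{W}$-factor that is absorbed by the exponential decay coming from $\Ps$, while $-\Lc\mathscr{W}/\mathscr{W}\sim\Ps$ ensures that $f$ is also dominated by the Lyapunov rate associated with $\mathscr{W}$. A secondary technical point is the rigorous derivation of the Dynkin identity for $P_t^f W$, which will be handled by the localization argument sketched above.
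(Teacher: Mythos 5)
Your proposal is correct and follows essentially the same route as the paper: the supermartingale bound $\E_x[M_t]\leq W(x)$ combined with $f\leq \Ps+C$ gives boundedness of $P_t^f$ on $\Binfty_W(\X)$, and the drift estimate comes from the Lyapunov inequality $(\Lc+f)W\leq -(a+\alpha)W+b\,\ind_{K}$, a Duhamel/Gr\"onwall step, control of the remainder via $\mathscr{M}_t$ and $\mathscr{W}$, and absorption of the subdominant term outside a compact set using $\mathscr{W}\ll W$. Your only deviations are cosmetic: you use the integral Dynkin identity plus Gr\"onwall where the paper differentiates $\e^{(a+\alpha)t}P_t^fW$, and you absorb a $\sqrt{W}$-remainder by sacrificing the decay rate from $2a$ to $a$ where the paper absorbs a $\mathscr{W}$-remainder by taking $\alpha>\log(2)/t$.
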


\begin{proof}
  We first show that for any~$f\in\Binfty_\kap(\X)$,~$(P_t^f)_{t\geq 0}$ is a semigroup of
  bounded operators on~$\Binfty_W(\X)$, before turning to the proof of~\eqref{eq:compactness}.
  For a fixed~$f\in\Binfty_\kap(\X)$, since $\kap \ll \Ps$, there exists~$c>0$ such that, for any $t>0$,
  \[
  \left(P_t^f W\right)(x) = \E_x\left[W(X_t)\,\e^{\int_0^t f(X_s)\,ds}\right]
  \leq \e^{ct}\E_x\left[W(X_t)\,\e^{- \int_0^t\frac{\Lc W}{W} (X_s)\,ds}\right].
  \]
  Using Lemma~\ref{lem:martingales}, the supermartingale property leads to
  \[
    \left( P_t^f W \right)(x) \leq \e^{ct}\E_x\left[M_t\right] \leq \e^{ct}W(x).
  \]
  Therefore, for all $\varphi\in\Binfty_W(\X)$, 
  \[
  \forall\,x\in\X,\quad
  \left|P_t^f \varphi(x)\right|\leq P_t^f |\varphi|(x)\leq \|\varphi\|_{\Binfty_W} \left(P_t^fW\right)(x), 
  \]
  and hence
  \[
  \left\|P_t^f \varphi\right\|_{\Binfty_W}\leq \e^{c t}\|\varphi\|_{\Binfty_W}.
  \]
  As a result~$(P_t^f)_{t\geq 0}$ is a semigroup of bounded operators over~$\Binfty_W(\X)$.

  We next prove~\eqref{eq:compactness} for a
  fixed $f\in\Binfty_\kap(\X)$, which we assume non-zero without loss of generality. Note that
  \[
  \frac{\Lc W}{W} + f \leq - \Ps + \| f \|_{\Binfty_\kap} \kap.
  \]
  Since~$\Ps$ has compact level sets and $\kap\ll \Ps$, for any~$a >0$ there exists a
  compact set $K_{a}\subset \X$ and a constant~$b_{0,a}$ such that
  \[
  \frac{\Lc W}{W} + f \leq -(a+\alpha) + b_{0,a} \ind_{K_a},
  \]
  where $\alpha>0$ is a constant to be chosen later on.
  This implies that
  \[
  (\Lc + f) W \leq - (a+\alpha) W + b_a \ind_{K_a},
  \]
  with $b_a = b_{0,a}\sup_{K_a} W < +\infty$ since $W\in C^2(\X)$. Therefore (by some
  standard approximation arguments relying on stopping times,
  as discussed for instance in~\cite{bellet2006ergodic})
  \begin{equation}
    \label{eq:dtPtf}
    \begin{aligned}
    \frac{d}{dt}\Big( \e^{(a+\alpha) t} P_t^f W\Big) & =\e^{(a+\alpha) t}
    P_t^f\big( (a+\alpha) W +(\Lc+f) W\big)\\ &
  \leq b_a\, \e^{(a+\alpha)t} \, P_t^f\ind_{K_a}\leq b_a\,\e^{(a+\alpha)t} \, P_t^f\ind.
  \end{aligned}
    \end{equation}
  We can now bound the right hand side of the above equation with a technique
  similar to the one used in~\cite[Section~2.3]{ferre2018more}. Indeed, for any $x\in\X$,
  \begin{equation}
    \label{eq:majofwW}
    \big(P_t^f\ind\big)(x) = \E_x\left[ \e^{\int_0^t f(X_s)\,ds}\right]
     \leq \E_x\left[ \e^{ \|f\|_{\Binfty_\kap} \int_0^t \kap(X_s)\,ds}\right].
  \end{equation}
  Since~$\kap \ll -\frac{\Lc \mathscr{W}}{\mathscr{W}}$, there exists 
  a constant~$c\geq 0$ depending on~$f$ such that
  \[
  \kap \leq \frac{1}{\|f\|_{\Binfty_\kap}}\left( -\frac{\Lc \mathscr{W}}{\mathscr{W}}
  + c\right).
  \]
  Plugging this estimate into~\eqref{eq:majofwW} and using that~$\mathscr{W}\geq 1$
  leads to
  \[
  \big(P_t^f\ind\big)(x) \leq \e^{ct}\,
  \E_x\left[\mathscr{W}(X_t)\, \e^{ \int_0^t -\frac{\Lc \mathscr{W}}{\mathscr{W}}(X_s)\,ds}\right]
  =\e^{ct}\,\E_x[\Mm_t]\leq \e^{ct} \mathscr{W}(x),
  \]
  where the last bound is due to Lemma~\ref{lem:martingales}.

  Using this estimate to bound the right hand side of~\eqref{eq:dtPtf}, we end up with
  \[
  \frac{d}{dt}\Big( \e^{(a+\alpha) t} P_t^f W\Big) 
  \leq b_a\, \e^{(a+\alpha+c)t}\,\mathscr{W}.
  \]
  Integrating with respect to time leads to
  \[
  \big( P_t^f W\big) (x) \leq \e^{-(a+\alpha)t}W(x) +\tilde{b}_a\mathscr{W}(x),\qquad
  \tilde{b}_a= \frac{b_a}{a+\alpha+c}\e^{ct}.
  \]
  Since~$\mathscr{W}\ll W$, there exists a compact set~$K_{a,t}\subset\X$
  such that $\tilde{b}_a\mathscr{W} \leq \e^{-(a+\alpha) t} W$ outside~$K_{a,t}$, so that we have
  \[
  \forall\,x\in\X,\quad
  \big( P_t^f W\big)(x) \leq 2\, \e^{-(a+\alpha)t}W(x) +\left(\tilde{b}_a\sup_{K_{a,t}} \mathscr{W}\right)
  \ind_{K_{a,t}}(x).
  \]
  We can now assume that we chose from the begining $\alpha>\log(2)/t$ (recall that~$t$
  is fixed). Setting $c_{a,t} = \tilde{b}_a\sup_{K_{a,t}} \mathscr{W}$, this leads to
  \[
  \forall\,x\in\X,\quad
  \big(P_t^f W\big)(x) \leq  \e^{-at}W(x) + c_{a,t} \ind_{K_{a,t}}(x),
  \]
  which proves~\eqref{eq:compactness}.
\end{proof}

Lemma~\ref{lem:feynmankac} proves crucial to obtain the compactness of the evolution
operator~$P_t^f$, as noted in~\cite{ferre2018more} (a result inspired
by~\cite[Theorem~8.9]{bellet2006ergodic}). Note however that~$(P_t^f)_{t\geq 0}$
is  a priori not a strongly continuous semigroup on~$B^\infty_W(\X)$, see the discussion in~\cite[Proposition~B13]{Wu00}
and Remark~\ref{rem:domain} below for more details.

Another key ingredient is the
regularization property of the evolution. The following bound on the Feynman--Kac semigroup
depending on the weight function~$f$ is one element in this direction.
\begin{lemma}
  \label{lem:diffbound}
  Suppose that Assumptions~\ref{as:regularity},~\ref{as:control}
  and~\ref{as:lyapunov} hold true, and fix $f,g\in\Binfty_\kap(\X)$.
  Then, for any~$t>0$, any $\varphi\in\Binfty_W(\X)$ and any $ x \in \X$, it holds
  \begin{equation}
    \label{eq:diffbound}
    \begin{aligned}
      & \left| \big(P_t^f\varphi\big)(x) - \big(P_t^g\varphi\big)(x) \right| \\
      & \qquad \leq \| \varphi \|_{\Binfty_W}
      \,   \E_x\left[  W(X_t)\left( \int_0^t | f(X_s) - g(X_s)|\, ds\right)
        \e^{ (\|f\|_{\Binfty_\kap} + \|g\|_{\Binfty_\kap}) \int_0^t \kap(X_s)\,ds} \right].
    \end{aligned}
  \end{equation}
\end{lemma}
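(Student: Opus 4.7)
The proof is a direct pointwise estimate inside the expectation. The key analytic input is the elementary inequality
\[
\forall\, A, B \in \R, \qquad |\e^A - \e^B| \leq |A - B|\,\e^{|A| + |B|},
\]
which follows from the mean value theorem applied to $x \mapsto \e^x$. The plan is to apply this with the random variables $A := \int_0^t f(X_s)\,ds$ and $B := \int_0^t g(X_s)\,ds$, then multiply by $|\varphi(X_t)|$ and take the expectation under $\E_x$.

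The three pointwise bounds needed are entirely routine. First, $|A - B| \leq \int_0^t |f(X_s) - g(X_s)|\,ds$ by the triangle inequality. Second, since $|f| \leq \|f\|_{\Binfty_\kap}\kap$ and similarly for $g$ (recall that $\kap \geq 1$, so these bounds also control the absolute values of the integrals),
\[
|A| + |B| \leq \big(\|f\|_{\Binfty_\kap} + \|g\|_{\Binfty_\kap}\big)\int_0^t \kap(X_s)\,ds.
\]
Third, $|\varphi(X_t)| \leq \|\varphi\|_{\Binfty_W} W(X_t)$. Combining these three estimates under $\E_x$ produces exactly the right-hand side of~\eqref{eq:diffbound}.

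There is no real obstacle here; the only point worth checking is that all expectations are finite, so that the pointwise bound transfers meaningfully to the integrated inequality. This is ensured by Lemma~\ref{lem:feynmankac}: since $|f| + |g| \in \Binfty_\kap(\X)$, the Feynman--Kac semigroup $P_t^{|f|+|g|}$ is well-defined on $\Binfty_W(\X)$, so that $\E_x\big[W(X_t)\,\e^{(\|f\|_{\Binfty_\kap} + \|g\|_{\Binfty_\kap})\int_0^t \kap(X_s)\,ds}\big]$ is finite and dominates every quantity appearing in the argument. The rest is assembly.
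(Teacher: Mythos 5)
Your proof is correct and follows essentially the same route as the paper: the elementary inequality $|\e^a-\e^b|\leq |a-b|\,\e^{|a|+|b|}$, followed by the pointwise bounds $|\varphi(X_t)|\leq \|\varphi\|_{\Binfty_W}W(X_t)$ and $|f|+|g|\leq (\|f\|_{\Binfty_\kap}+\|g\|_{\Binfty_\kap})\kap$ inside the expectation. The extra remark on finiteness via Lemma~\ref{lem:feynmankac} is a harmless addition not needed for the stated inequality, which holds even if the right-hand side is infinite.
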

\begin{proof}
  Using the inequality~$| \e^a - \e^b|\leq | a - b|\, \e^{|a| +|b|}$ for~$a,b\in\R$,
  we have, for~$x\in \X$,
  \[
  \begin{aligned}
    & \left| \big(P_t^f\varphi\big)(x) - \big(P_t^g \varphi\big) (x) \right| \leq \E_x \left[ |\varphi(X_t)|\, \left|
    \,\e^{\int_0^t f(X_s)\, ds} - \e^{\int_0^t g(X_s)\, ds} \right| \right]
  \\ & \qquad
  \leq \| \varphi \|_{\Binfty_W} \E_x \left[ W (X_t) \left| \int_0^t
    f(X_s)\,ds -\int_0^t g(X_s)\, ds \right| \, \e^{ \int_0^t |f(X_s)|\, ds +
      \int_0^t |g(X_s)|\, ds}\right],
  \\ & \qquad \leq \| \varphi \|_{\Binfty_W} \E_x \left[ W (X_t)\left(  \int_0^t |
    f(X_s) - g(X_s)| \, ds \right) \e^{ (\|f\|_{\Binfty_\kap} + \|g\|_{\Binfty_\kap}) \int_0^t
      \kap(X_s)\,ds} \right],
  \end{aligned}
  \]
  which is the desired conclusion.
\end{proof}

We can now use Lemma~\ref{lem:diffbound} to show an important regularization property of
the Feynman--Kac semigroup. 
\begin{lemma}
  \label{lem:regularity}
  For any~$f\in\Binfty_\kap(\X)$, $\varphi\in\Binfty_W(\X)$, any $t>0$ and any compact
  $K\subset\X$, the function~$P_t^f(\varphi\ind_K)$ is continuous.
\end{lemma}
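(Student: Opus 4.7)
Set $\psi := \varphi \ind_K$, which is bounded and compactly supported: $|\psi| \leq \|\varphi\|_{\Binfty_W}\sup_K W < +\infty$. The strategy is to approximate $f \in \Binfty_\kap(\X)$, which is only measurable, by a sequence of smooth functions and to use the hypoellipticity of $\partial_t + \Lc$ to deduce continuity. Concretely, since $\kap$ is continuous (hence locally bounded), $f$ is locally bounded, and a standard truncation-and-mollification argument produces $(f_n) \subset \Cinfc(\X)$ with $\sup_n \|f_n\|_{\Binfty_\kap} \leq C\|f\|_{\Binfty_\kap}$ and $f_n \to f$ Lebesgue-a.e.\ on~$\X$.

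For each $n$, the function $u_n(s,x) := (P_s^{f_n}\psi)(x)$ solves, in the distributional sense, the backward Kolmogorov equation $\partial_s u_n = (\Lc+f_n)u_n$ with $u_n(0,\cdot) = \psi$. By Assumption~\ref{as:regularity}, $\partial_s + \Lc$ is hypoelliptic; since $f_n$ is smooth, $\partial_s - \Lc - f_n$ remains hypoelliptic, so $u_n$ is $C^{\infty}$ on $(0,+\infty)\times\X$. Thus $P_t^{f_n}\psi$ is continuous in $x$ for each $n$. To transfer continuity to the limit, Lemma~\ref{lem:diffbound} with $g=f_n$ yields, setting $A := \|f\|_{\Binfty_\kap} + \sup_n\|f_n\|_{\Binfty_\kap} < +\infty$,
\[
\left|(P_t^f\psi - P_t^{f_n}\psi)(x)\right|
\leq \|\varphi\|_{\Binfty_W}\, \E_x\!\left[W(X_t)\!\left(\int_0^t |f-f_n|(X_s)\,ds\right)\! e^{A\int_0^t\kap(X_s)\,ds}\right].
\]
Using $\kap\ll\Ps$ together with the supermartingales of Lemma~\ref{lem:martingales} (as in the proof of Lemma~\ref{lem:feynmankac}), the dominating expectation $\E_x[W(X_t)\,e^{(A+\varepsilon)\int_0^t\kap(X_s)\,ds}]$ is finite and bounded uniformly in $x$ over compacts, for some $\varepsilon>0$. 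Since $(X_s)$ admits a smooth density by hypoellipticity, $f_n \to f$ a.e.\ implies $f_n(X_s)\to f(X_s)$ almost surely under $\proba_x$ for each $s$, and dominated convergence (with uniform dominator $2A\kap$) shows that the right-hand side tends to $0$ pointwise in~$x$.

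The main obstacle is upgrading this pointwise convergence to convergence uniform on compact sets of $\X$, which is what is needed to conclude that the limit $P_t^f\psi$ inherits continuity from the continuous functions $P_t^{f_n}\psi$. Bridging this gap requires a uniform integrability argument as $x$ varies over a compact set, which should follow from the continuous (in fact smooth) dependence of the law of the trajectory $(X_s^{(x)})_{s\in[0,t]}$ on $x$ granted by hypoellipticity and the smoothness of the SDE coefficients in Assumption~\ref{as:regularity}. An alternative, perhaps cleaner route would be to expand $P_t^f\psi$ via a Dyson-type Feynman--Kac series, in which the outermost operator $P_{t_1}$ (for $t_1>0$) acts as a smoothing operator on the data, directly producing the desired $x$-regularity term by term, with the tail of the series controlled uniformly on compacts by the Lyapunov estimates of Lemma~\ref{lem:feynmankac}.
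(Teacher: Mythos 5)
Your skeleton coincides with the paper's: approximate $f$ by smooth, compactly supported $f_n$ with $\|f_n\|_{\Binfty_\kap}\leq\|f\|_{\Binfty_\kap}$ and $f_n\to f$ a.e.\ (Lusin), use hypoellipticity to get continuity of $P_t^{f_n}(\varphi\ind_K)$, and pass to the limit via the difference estimate of Lemma~\ref{lem:diffbound}. But the step you flag as ``the main obstacle'' --- upgrading pointwise convergence of $P_t^{f_n}(\varphi\ind_K)$ to convergence \emph{uniform on compact sets} --- is precisely the content of the paper's proof, and your two suggested bridges do not close it. Dominated convergence with the dominator you describe gives convergence at each fixed $x$ only: since $f_n\to f$ merely almost everywhere and $|f-f_n|$ is unbounded, there is no modulus of convergence to propagate, and ``continuous (smooth) dependence of the law of $(X_s^{(x)})_{s\le t}$ on $x$'' does not by itself yield uniform-in-$x$ smallness of $\E_x\bigl[W(X_t)\bigl(\int_0^t|f-f_n|(X_s)\,ds\bigr)\e^{A\int_0^t\kap(X_s)\,ds}\bigr]$. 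The Dyson-series alternative is likewise only gestured at and would face the same unbounded-weight issues.

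What is actually needed (and what the paper does) is a quantitative decomposition: introduce the events $\mathscr{E}_m=\{\frac1t\int_0^t\Ps(X_s)\,ds\le m\}$; on $\mathscr{E}_m^c$, combine Cauchy--Schwarz with the auxiliary supermartingale $\Mm_t$ and the bound $\Mm_t^2\le C_1\e^{C_2t}M_t$ (this is exactly where $\mathscr{W}$ and~\eqref{eq:restriction} are used) together with Tchebychev applied to $\e^{\int_0^t\Ps}$, giving a bound $\mathrm{O}(\e^{-mt/2})$ uniform over $x$ in a compact $K'$; on $\mathscr{E}_m$, bound the exponential by $\e^{(m+c')t}$ and split the time integral of $|f-f_n|$ according to whether $X_s$ lies outside or inside a large ball $B_R$: the outside part is controlled by $\kap\ll\Ps$ and the constraint $\int_0^t\Ps\le mt$, while the inside part reduces to $\int_0^t P_s\bigl(|f-f_n|\ind_{B_R}\bigr)(x)\,ds$, which tends to $0$ uniformly on $K'$ because $|f-f_n|\ind_{B_R}$ is bounded, converges a.e.\ to $0$, the kernel of $P_s$ has a smooth density for $s>0$, and the contribution of small times $s\in[0,\eta]$ is $\mathrm{O}(\eta)$. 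Without an argument of this type (or a genuine substitute), your proof establishes only pointwise convergence, which is not enough to conclude that the limit $P_t^f(\varphi\ind_K)$ is continuous; so as written the proposal has a real gap at its decisive step.
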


Let us insist on the fact that the statement of Lemma~\ref{lem:regularity} is a consequence of
Hörmander's theorem~\cite[Theorem~4.1]{eckmann2003spectral}
when~$f$ has polynomial growth and is smooth. However, the result is
more difficult to obtain when~$f$ is irregular. Note for instance that we cannot rely on the continuity property
proved in Section~\ref{sec:proofcor} below since the space of smooth functions with compact support is not dense
in~$B^\infty_W(\X)$. The idea
of the proof is to use the local martingales introduced in Lemma~\ref{lem:martingales}
to show that the regularization property of Hörmander's theorem is preserved when~$f$ is irregular
but does not grow too fast. 

\begin{proof}
  We use Assumption~\ref{as:regularity}
  to revisit~\cite[pages~34-35]{gartner1977large} in an unbounded setting and
  with a hypoelliptic flavour. First, we note that for $f\in\Cinfc(\X)$, the result is a direct
  application of Assumption~\ref{as:regularity} combined with Hörmander's theorem, since the
  evolution operator~$P_t^f$ can be shown to be an integral operator with a transition probability which admits a
  density~$p^f(t,x,y)$ belonging to~$C^\infty((0,+\infty)\times \X \times \X)$ (see for instance~\cite{IK74}
  for~$f=0$, which can easily be extended to~$f\in\Cinfc(\X)$ with the hypoelliptic result of~\cite[Theorem~4.1]{eckmann2003spectral}). In particular,~$P_t^f(\varphi\ind_K)$ is continuous.  


  We now use an approximation argument inspired by~\cite[Section~3]{gartner1977large}
  for a generic function $f\in\Binfty_\kap(\X)$. Consider a sequence~$(f_n)_{n\in\N}$ of functions
  belonging to~$\Cinfc(\X)$ with $\|f_n\|_{\Binfty_\kap}\leq \|f\|_{\Binfty_\kap}$
  for any $n\in\N$, and such
  that~$f_n \to f$ almost everywhere as $n\to +\infty$ (such a sequence exists by Lusin's theorem,
  see~\cite[Chapter~2]{rudin2006real}). By modifying the proof of Lemma~\ref{lem:diffbound}, and since
  $\|f_n\|_{\Binfty_\kap}\leq \|f\|_{\Binfty_\kap}$, we have for any $\varphi\in\Binfty_W(\X)$, $n\in\N$ and $x\in\X$,
  \begin{equation}
    \label{eq:intineqdiff}
    \begin{aligned}
      & \Big| P_t^f\big(\varphi\ind_K\big)(x) - P_t^{f_n}\big(\varphi\ind_K\big)(x) \Big| \\
      & \qquad \leq  \| \varphi \|_{\Binfty_W}
      \,   \E_x\left[\ind_K(X_t)  W(X_t)\left( \int_0^t | f(X_s) - f_n(X_s)|\, ds\right)
        \e^{ \delta \int_0^t \kap(X_s)\,ds} \right],
    \end{aligned}
  \end{equation}
  with $\delta = 2 \|f\|_{\Binfty_\kap}$.

  Our goal is now to show that~$P_t^{f_n}(\varphi\ind_K)$ converges uniformly over any
  compact~$K'$ to~$P_t^f(\varphi\ind_K)$, by proving that the right hand side
  of~\eqref{eq:intineqdiff} goes uniformly to~$0$ over~$K'$. This will conclude the proof since
  a uniform limit of continuous functions is continuous.

  We introduce to this end the events
  \begin{equation}
    \label{eq:event}
    \forall\, m\geq 1,\quad \mathscr{E}_m = \left\{\frac{1}{t} \int_0^t \Ps(X_s)\,ds
    \leq m \right\},
  \end{equation}
  and fix a compact set $K'\subset\X$.
  The right hand side of~\eqref{eq:intineqdiff} can then be split into two terms
  \[
  \begin{aligned}
    (A) = \E_x \left[\ind_K(X_t)\ind_{\mathscr{E}_m^c}  W(X_t)
      \left( \int_0^t | f(X_s) - f_n(X_s)|\, ds\right)
    \e^{ \delta \int_0^t \kap(X_s)\,ds} \right],
    \\
    (B) = \E_x \left[\ind_K(X_t)\ind_{\mathscr{E}_m}  W(X_t)\left(
      \int_0^t | f(X_s) - f_n(X_s)|\, ds\right)
      \e^{ \delta \int_0^t \kap(X_s)\,ds} \right],
  \end{aligned}
  \]
  for which we show convergence to~$0$, uniformly for $x\in K'$, starting with~$(A)$.
  Since~$\kap\ll -\Lc \mathscr{W}/\mathscr{W}$, there exists $c>0$ such that
  \[
  2\delta \kap \leq - \frac{\Lc \mathscr{W}}{\mathscr{W}} + c.
  \]
  Moreover,
  $\|f_n\|_{\Binfty_\kap}\leq \|f\|_{\Binfty_\kap}$,
  $a\leq \e^a$, and $\mathscr{W}\geq 1$, so that
  \[
  (A) \leq \e^{ct} \left( \sup_K W \right) \,\E_x \left[\ind_K(X_t)\ind_{\mathscr{E}_m^c}  \mathscr{W}(X_t)\,
    \e^{ \int_0^t  -\frac{\Lc \mathscr{W}}{\mathscr{W}}(X_s)\,ds} \right].
  \]
  By definition of~$\mathscr{M}_t$ in~\eqref{eq:martingales} we have
  \[
  (A) \leq \e^{ct} \left(\sup_K W\right) \,\E_x \left[\ind_{\mathscr{E}_m^c} \mathscr{M}_t\right].
  \]
  The Cauchy--Schwarz inequality then shows that
  \[
  (A)\leq \e^{ct} \left(\sup_K W\right)  \sqrt{\E_x[\mathscr{M}_t^2] }\left( \proba_x \left( 
   \int_0^t \Ps(X_s)\,ds > m t \right) \right)^{\frac{1}{2}}.
  \]
  By~\eqref{eq:martingalebound} it holds
  $\sqrt{\E_x[\mathscr{M}_t^2] }\leq \sqrt{C_1}\,\e^{C_2 t/2} \sqrt{W(x)}$.
  Next, by Tchebychev's inequality and since $W\geq 1$,
  \[
    \begin{aligned}
      \proba_x \left(\int_0^t \Ps(X_s)\,ds > m t\right) & \leq \e^{-mt} \E_x\left[\e^{\int_0^t
          \Ps (X_s)\,ds}\right] \\
      & \leq \e^{-mt} \E_x\left[W(X_t)\,\e^{-\int_0^t
          \frac{\Lc W}{W}(X_s)\,ds}\right] \leq  \e^{-mt} W(x).
    \end{aligned}
  \]
  As a result, we obtain, for $x\in K'$,
  \[
  (A) \leq  \e^{-\frac{mt}{2}} \left(\sup_K W\right)\left( \sup_{K'}W\right)
  \sqrt{C_1}\,\e^{ct + C_2 t/2}.
  \]
  Therefore, for any $\varepsilon>0$, we can choose $m\geq 0$ such that $(A)\leq \varepsilon$.

  Let us now control~$(B)$, introducing $g_n = |f - f_n|$. Since~$\kap\ll\Ps$, it holds
  for some $c'\geq 0$,
  \[
  \delta \kap \leq \Ps + c'.
  \]
  Using the definition~\eqref{eq:event} we have 
  \[
  \begin{aligned}
    (B) & \leq  \e^{(m+c')t} \left(\sup_K W \right)
    \E_x\left[ \ind_{\mathscr{E}_m} \int_0^t g_n(X_s)\,ds
      \right]
    \\ & \leq  \underbrace{ \e^{(m+c')t} \left(\sup_K W\right)\E_x\left[ \ind_{\mathscr{E}_m}
        \int_0^t g_n(X_s)\ind_{ B_R^c}(X_s)\,ds \right]}_{(B')} 
    + \underbrace{ \e^{(m+c')t}\left( \sup_K W\right)
    \E_x\left[ \ind_{\mathscr{E}_m} \int_0^t g_n(X_s)
      \ind_{B_R}(X_s)\,ds \right]}_{(B'')},
    \end{aligned}
  \]
  where~$B_R$ is the ball of center~$0$ and radius~$R>0$ to be chosen. Let us first bound~$(B')$, which
  retains only the parts of the trajectories performing excursions out of~$B_R$.
  Using $\kap\ll \Ps$, for $\varepsilon>0$ and $m\geq 0$ as fixed above,
  there exist $R>0$, $C_R >0$ such that
  \[
  \kap \leq \varepsilon\frac{\e^{-(m+c')t}}{2t m \left(\sup_K W\right) \| f\|_{\Binfty_\kap}} \Ps
  + C_R \ind_{B_R}.
  \]
  We fix $R>0$ and $C_R>0$ such that the above inequality holds true.
  Using again $g_n\leq 2\|f\|_{\Binfty_\kap}\kap$, we are led to
  \[
  \begin{aligned}
    (B') & \leq 2\e^{(m+c')t}\left( \sup_K W\right) \| f\|_{\Binfty_\kap}
    \E_x\left[ \ind_{\mathscr{E}_m} \int_0^t \kap(X_s)\ind_{B_R^c}(X_s)\,ds\right]
    \\ & \leq 
    \E_x\left[ \ind_{\mathscr{E}_m} \int_0^t \frac{\varepsilon}{tm}
      \Ps(X_s)\ind_{B_R^c}(X_s)\,ds\right]
     \leq  \frac{\varepsilon}{tm}
    \E_x\left[ \ind_{\mathscr{E}_m} \int_0^t \Ps(X_s)\,ds\right]
     \leq \varepsilon,
  \end{aligned}
  \]
  where the last line follows from the definition~\eqref{eq:event} of $\mathscr{E}_m$.
  Therefore, once~$m$ is fixed, there exists $R>0$  such that for any $n\geq 1$ and  $x\in K'$,
  it holds $(B')\leq \varepsilon$. It remains to control~$(B'')$ in order to obtain the uniform
  convergence to zero of~\eqref{eq:intineqdiff} over~$K'$ as $n\to +\infty$. In fact,
  \[
    \begin{aligned}
  (B'') & \leq \e^{(m+c')t}\left( \sup_K W\right) \int_0^t \E_x\left[  g_n(X_s)
    \ind_{ B_R}(X_s) \right]ds \\
  & = \e^{(m+c')t} \left(\sup_K W\right) \int_0^t P_s (g_n
  \ind_{ B_R})(x)\,ds,
  \end{aligned}
  \]
  where~$(P_s)_{s\geq 0}$ is the evolution semigroup defined in~\eqref{eq:Pt}. Since
  $(\ind_{B_R}g_n)_{n\geq1}$ is a sequence of bounded functions converging almost everywhere
  to zero and the transition kernel~$P_s$ has a smooth density for $s>0$, it follows that
  $(P_s (g_n\ind_{ B_R}))_{n\geq 1}$ goes uniformly to zero over compact sets for any $s>0$
  as $n\to+\infty$, see
  \textit{e.g.}~\cite{gartner1977large,bellet2006ergodic}. Moreover, it can be shown that
  \[
  \left| \int_0^{\eta}P_s (g_n\ind_{ B_R})\,ds \right| \leq \eta \left\| g_n \ind_{B_R} \right\|_{B^\infty} \leq 2\eta \left\| f \ind_{B_R} \right\|_{B^\infty},
  \]
  which goes to zero when $\eta\to 0$, uniformly in $x\in K'$ and $n\in\N$. 
  Therefore, for~$\varepsilon >0$, $R>0$ and $m\geq 0$ fixed as above, and choosing
  \[
    \eta = \varepsilon \, \frac{\e^{-(m+c')t}}{2 \left\| f \ind_{B_R} \right\|_{B^\infty} \sup_K W},
  \]
  there exists $n'\in\N$ such that for all $n\geq n'$ and $x\in K'$,
  \begin{equation}
    \label{eq:deltan}
  0\leq  \int_0^t P_s (g_n\ind_{ B_R})(x)\,ds =
  \int_0^{\eta} P_s (g_n\ind_{ B_R})(x)\,ds +
  \int_{\eta}^t P_s (g_n\ind_{ B_R})(x)\,ds \leq  \varepsilon\frac{\e^{-(m+c')t}}{\sup_K W}.
  \end{equation}
  Then, for any $n\geq n'$, $x\in K'$, it holds
  \[
  (B'')\leq \varepsilon.
  \]

  Let us summarize the various approximations: for any $\varepsilon >0$, we first fix
  $m\geq 0$ so that $(A)\leq \varepsilon$. Then, we choose $R>0$ large enough so that
  $(B')\leq \varepsilon$. Finally, we take~$\eta$ small enough and~$n$ large enough
  in~\eqref{eq:deltan} so that $(B'')\leq \varepsilon$ for $n\geq n'$. As a result, for
  any~$\varepsilon>0$
  there is $n'\geq 0$ such that for $n\geq n'$ and $x\in K'$, it holds $(A)+(B)\leq 3\varepsilon$.

  In conclusion, the right hand side of~\eqref{eq:intineqdiff} goes to zero uniformly
  as $n\to+\infty$ over any
  compact set~$K'$. Therefore~$P_t^{f_n}(\varphi\ind_K)$ is continuous
  and converges uniformly over~$K'$ to~$P_t^f(\varphi\ind_K)$, which is therefore continuous
  over~$K'$. Since the compact $K'\subset\X$ is arbitrary,~$P_t^f(\varphi\ind_K)$ is continuous
  over~$\X$, which concludes the proof.
\end{proof}

Before presenting the main result concerning the spectral properties of the operator~$P_t^f$
and its consequences on the definition of the cumulant function~$\lambda(f)$, we need
the following ``irreducibility'' lemma, which relies on Assumption~\ref{as:control}.

\begin{lemma}
  \label{lem:irreducibility}
  For any time $t>0$, $x\in\X$ and any Borel set $A\subset\X$ with non-empty interior, it
  holds that
  \begin{equation}
    \label{eq:irreducibility}
    \left(P_t^f\ind_A\right)(x) >0.
  \end{equation}
\end{lemma}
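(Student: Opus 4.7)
The plan is to decompose
\[
\big(P_t^f\ind_A\big)(x)=\E_x\!\left[\ind_A(X_t)\,\e^{\int_0^t f(X_s)\,ds}\right]
\]
and show separately that (a) the event $\{X_t\in A\}$ has positive probability, and (b) on a subset of this event of positive probability, the exponential factor is bounded below by a strictly positive constant.

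For (a), since $A$ has non-empty interior it contains a non-empty open ball $B$, and it suffices to show $\proba_x(X_t\in B)>0$. This is exactly the irreducibility statement noted in the text just after Assumption~\ref{as:control}: Assumptions~\ref{as:regularity} and~\ref{as:control} together (Hörmander's hypoellipticity plus Stroock--Varadhan-style support arguments, as in~\cite[Proposition~8.1]{bellet2006ergodic}) imply that the transition kernel of $(X_t)_{t\geq 0}$ possesses a smooth density $p_t(x,\cdot)$ which is strictly positive on all of $\X$. Hence $\proba_x(X_t\in B)=\int_B p_t(x,y)\,dy>0$.

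For (b), since $f\in\Binfty_\kap(\X)$ one also has $-f\in\Binfty_\kap(\X)$, so Lemma~\ref{lem:feynmankac} applied to both $f$ and $-f$ gives $\big(P_t^f W\big)(x)<+\infty$ and $\big(P_t^{-f}W\big)(x)<+\infty$. Since $W\geq 1$, this forces $\e^{\pm\int_0^t f(X_s)\,ds}<+\infty$ almost surely under $\proba_x$, i.e.\ $\int_0^t f(X_s)\,ds\in\R$ almost surely. Introducing
\[
E_N=\left\{\int_0^t f(X_s)\,ds\geq -N\right\},
\]
one therefore has $\proba_x(E_N)\nearrow 1$ as $N\to+\infty$.

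To conclude, $\sigma$-additivity gives $\proba_x(X_t\in A)=\lim_{N\to+\infty}\proba_x(X_t\in A,E_N)$, so by (a) there exists $N>0$ with $\proba_x(X_t\in A,E_N)>0$. On $\{X_t\in A\}\cap E_N$ the integrand is bounded below by $\e^{-N}$, so
\[
\big(P_t^f\ind_A\big)(x)\geq \e^{-N}\,\proba_x(X_t\in A,E_N)>0.
\]
I do not anticipate a real obstacle here: the only subtlety, compared to a bounded $f$, is that the exponential factor can in principle be arbitrarily small on part of $\{X_t\in A\}$, which is precisely why the truncation by $E_N$ is needed; the finiteness of $P_t^{\pm f}W(x)$ from Lemma~\ref{lem:feynmankac} is the exact input that makes this truncation legitimate.
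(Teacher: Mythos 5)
Your argument is correct, but it takes a genuinely different route from the paper's. You split the problem into endpoint positivity, $\proba_x(X_t\in A)>0$, which you get from the everywhere-positive smooth transition density asserted after Assumption~\ref{as:control}, and control of the exponential weight, which you handle by truncating on $E_N=\{\int_0^t f(X_s)\,ds\geq -N\}$ and noting that $\int_0^t f(X_s)\,ds$ is almost surely finite (your appeal to Lemma~\ref{lem:feynmankac} applied to $\pm f$ does the job; even more simply, $|f|\leq \|f\|_{\Binfty_\kap}\,\kap$ with $\kap$ continuous and the paths continuous and non-explosive already gives a.s.\ finiteness, so the truncation step is essentially automatic). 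The paper instead argues pathwise: by Assumption~\ref{as:control} it selects a control path $\phi$ joining $x$ to some $y\in\mathring{A}$, invokes the Stroock--Varadhan support theorem to give the $\varepsilon$-tube event around $\phi$ positive probability, and bounds the exponential from below \emph{uniformly} on that event by $\exp\big(-t\|f\|_{\Binfty_\kap}\sup_{S_{\phi,\varepsilon}}\kap\big)$, since the tube is bounded and $\kap$ is continuous. The trade-off: the paper's proof is self-contained at this point (it uses only the support theorem, not the positivity of the density, which the paper itself only claims by reference to an adaptation of~\cite[Proposition~8.1]{bellet2006ergodic}, an argument of essentially the same nature as the tube construction) and produces an explicit lower bound on $(P_t^f\ind_A)(x)$ in terms of the tube probability; your version leans on that cited positivity fact as a black box, but in exchange dispenses with any uniform bound on the exponential, replacing it with the softer observation that the time integral of $f$ is a.s.\ real-valued.
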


\begin{proof}
  Take $x\in\X$ and $y\in\mathring{A}$ (which is possible since $A$ has non-empty interior). By
  Assumption~\ref{as:control}, there exists a $C^1$-path $(\phi_s)_{s\in[0,t]}$
  solving~\eqref{eq:path} such that $\phi_0 = x$ and $\phi_t = y$.
  We can then use the proof of the Stroock--Varadhan support theorem,
  see~\cite[Theorem~6.1]{bellet2006ergodic} for an overview. In particular,
  Assumption~\ref{as:control} implies that~\cite[Eq.~(5.5)]{stroock1972support}
  is satisfied. Therefore,~\cite[Eq.~(5.3)]{stroock1972support} ensures that,
  for any $\varepsilon>0$,
  \begin{equation}
    \label{eq:phidef}
    \proba_x\left( \sup_{0\leq s\leq t} | X_s - \phi_s|\leq
  \varepsilon \right) >0.
  \end{equation}
  Moreover, since  $\phi_t=y\in\mathring{A}$ and upon reducing $\varepsilon>0$
  we may assume that $B(y,\varepsilon) \subset A$,
  where $B(y,\varepsilon)$ denotes the ball of center~$y$ and radius $\varepsilon>0$.
  Recalling that $f\in\Binfty_\kap(\X)$, we then obtain
  \begin{equation}
    \label{eq:intmino}
  \begin{aligned}
  \big(P_t^f\ind_A\big)(x) & = \E_x\left[ \ind_{\{X_t\in A\} }\, \e^{\int_0^t f(X_u)\, du}\right]
   \geq \E_x\left[ \ind_{\{ \sup_{0\leq s\leq t } | X_s - \phi_s|\leq \varepsilon\}} \,
    \e^{- \|f\|_{\Binfty_\kap}\int_0^t  \kap(X_u)\, du}\right]
  \\ & \geq \mathrm{exp}\Big(- t \|f\|_{\Binfty_\kap} \sup_{\mathrm{S_{\phi,\varepsilon}}} \kap\Big)
\proba_x\left( \sup_{0\leq s\leq t} | X_s - \phi_s|\leq
  \varepsilon \right),
  \end{aligned}
  \end{equation}
  where we denote by $S_{\phi,\varepsilon}$ the $\varepsilon$-tube around the
  path $(\phi_s)_{s\in[0,t]}$,
  namely
  \[
    S_{\phi,\varepsilon} =\left\{ x\in \X \, \big|\, \exists\, s\in[0,t]\ \mathrm{with}\
      |\phi_s - x|\leq \varepsilon\right\}.
  \]
  Since $S_{\phi,\varepsilon}$ is a bounded set and $\kap$ is continuous over $\X$, it holds
  \[
  \sup_\mathrm{S_{\phi,\varepsilon}} \kap < +\infty.
  \]
  The combination of~\eqref{eq:phidef} and~\eqref{eq:intmino} leads to the desired
  result~\eqref{eq:irreducibility}.
\end{proof}

At this stage, we follow the spectral analysis path developed in~\cite{ferre2018more}.
However, we have to prove that the assumptions used in~\cite{ferre2018more} are fulfilled in
our context. In particular the irreducibility is granted by Lemma~\ref{lem:irreducibility}.

\begin{lemma}
  \label{lem:specLf}
  For any $f\in\Binfty_\kap(\X)$ and any $t>0$, the operator $P_t^f$ considered over~$\Binfty_W(\X)$
  has a real largest eigenvalue~$\mathrm{e}^{t r(f)}$ with eigenspace of dimension one, and an associated
  continuous eigenvector
  $h_f \in \Binfty_W(\X)$ such that $h_f(x) > 0$ for any $x\in\X$.
  Moreover,~$h_f$ is the only positive eigenvector of~$P_t^f$ (up to multiplication by
  a positive constant). Finally,~$r(f)$ is equal to the cumulant function defined
  in~\eqref{eq:SCGF}:
  \begin{equation}
    \label{eq:SCGFspec}
  r(f) =\lambda(f).
  \end{equation}
\end{lemma}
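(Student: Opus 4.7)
The plan is a three-step spectral analysis in the spirit of~\cite{ferre2018more}, exploiting the three preparatory results established above: the Doeblin-type Lyapunov bound of Lemma~\ref{lem:feynmankac}, the strong Feller regularization of Lemma~\ref{lem:regularity}, and the irreducibility of Lemma~\ref{lem:irreducibility}. The identification~$r(f)=\lambda(f)$ will then follow from the asymptotic spectral gap provided by Krein--Rutman applied to the constant function~$\ind$.

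First I would establish quasi-compactness of~$P_t^f$ on~$\Binfty_W(\X)$ by writing
\[
P_t^f \varphi = P_t^f\big(\varphi \ind_{K_{a,t}}\big) + P_t^f\big(\varphi \ind_{K_{a,t}^c}\big).
\]
The Lyapunov estimate~\eqref{eq:compactness} controls the second term by $\|\varphi\|_{\Binfty_W}\, \e^{-at} W$, hence with operator norm at most~$\e^{-at}$ on~$\Binfty_W(\X)$. For the first term, Lemma~\ref{lem:regularity} ensures continuity of $P_t^f(\varphi \ind_{K_{a,t}})$, and a standard equicontinuity argument based on the smooth transition density (guaranteed by Assumption~\ref{as:regularity}) together with Arzelà--Ascoli shows that $\varphi \mapsto P_t^f(\varphi\ind_{K_{a,t}})$ is a compact operator on~$\Binfty_W(\X)$. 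Taking $a$ arbitrarily large makes the essential spectral radius of~$P_t^f$ strictly smaller than any eigenvalue of modulus close to its spectral radius.

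Second, I would invoke the Krein--Rutman theorem as applied in~\cite{ferre2018more}. The operator~$P_t^f$ is positive by~\eqref{eq:feynmankac} and irreducible by Lemma~\ref{lem:irreducibility}, so Krein--Rutman yields a unique real largest eigenvalue $\e^{tr(f)}$, simple and isolated, with a nonnegative eigenvector $h_f \in \Binfty_W(\X)$; strict positivity of~$h_f$ everywhere on~$\X$ follows from a second application of Lemma~\ref{lem:irreducibility} to the identity $\e^{tr(f)} h_f = P_t^f h_f$, and uniqueness of the positive eigenvector up to a scalar is part of Krein--Rutman in the irreducible setting. Continuity of~$h_f$ would be obtained by writing $h_f = \e^{-tr(f)} \lim_{R\to+\infty} P_t^f(h_f \ind_{B_R})$: each approximant is continuous by Lemma~\ref{lem:regularity}, and the tail contribution is uniformly small on compacts thanks to~\eqref{eq:compactness} applied to $h_f \leq \|h_f\|_{\Binfty_W} W$. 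The same spectral argument produces a positive left eigen-measure~$\ell_f$ together with the asymptotic
\[
\forall\,\varphi\in \Binfty_W(\X),\quad \e^{-nt r(f)} \big(P_{nt}^f \varphi\big)(x) \xrightarrow[n\to+\infty]{} h_f(x)\, \ell_f(\varphi).
\]

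Third, applying this convergence to $\varphi = \ind \in \Binfty_W(\X)$ and using that $\ell_f(\ind) > 0$ (since~$\ell_f$ is a nontrivial positive linear form) and $h_f(x) > 0$ gives
\[
\frac{1}{nt}\,\log \big(P_{nt}^f \ind\big)(x) \xrightarrow[n\to +\infty]{} r(f).
\]
Interpolation to continuous time along $s \in [nt,(n+1)t]$ is obtained using the bound $\left|\int_{nt}^{s} f(X_u)\,du\right|\leq \|f\|_{\Binfty_\kap}\int_{nt}^{(n+1)t} \kap(X_u)\,du$ and Lyapunov-based exponential moment controls analogous to those proving Lemma~\ref{lem:feynmankac}, which show that $s \mapsto \frac{1}{s}\log \E_x[\e^{\int_0^s f(X_u)\,du}]$ has the same limit as its subsequence at times~$nt$, so that
\[
\lambda(f) = \lim_{s\to +\infty} \frac{1}{s}\,\log\,\E_x\left[\e^{\int_0^s f(X_u)\, du}\right] = r(f).
\]
The main obstacle is twofold: upgrading the pointwise continuity of Lemma~\ref{lem:regularity} into the equicontinuity of the image of the unit ball needed for compactness in~$\Binfty_W(\X)$, which calls for a gradient estimate on the transition density (available under the Hörmander condition of Assumption~\ref{as:regularity}) or a refinement of the smoothing step; and, to a lesser extent, controlling the continuous-time interpolation uniformly despite the unboundedness of~$\kap$.
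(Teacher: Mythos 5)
Your overall skeleton (compactness of $P_t^f$ on $\Binfty_W(\X)$, Krein--Rutman, then identification of $r(f)$ with $\lambda(f)$ via a normalized long-time limit applied to $\ind$) is the same as the paper's, but two steps that you treat as routine are precisely where the real work lies, and as written they do not go through. First, the compactness step. Your splitting $P_t^f\varphi = P_t^f(\varphi\ind_K)+P_t^f(\varphi\ind_{K^c})$ puts the cutoff \emph{inside} the operator, and \eqref{eq:compactness} does not make the remainder small: it bounds $P_t^f W$ by $\e^{-at}W + c_{a,t}\ind_{K_{a,t}}$, so $P_t^f(W\ind_{K^c})(x)$ is only $O(1)$ for $x$ in the compact set; the estimate is effective when the indicator sits \emph{outside}, i.e.\ for $\ind_{K_{a,t}^c}P_t^f$, which is why the paper uses the three-factor decomposition $P_{t/3}^f\ind_K P_{t/3}^f\ind_K P_{t/3}^f$ plus tail terms. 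More seriously, the equicontinuity you need for Arzel\`a--Ascoli is exactly the obstacle you flag at the end, and your proposed fix (gradient bounds on the transition density via H\"ormander) is not available: for a merely measurable $f\in\Binfty_\kap(\X)$ the kernel of $P_t^f$ is not smooth, and Lemma~\ref{lem:regularity} only gives continuity of each image $P_t^f(\varphi\ind_K)$, not uniform equicontinuity over the unit ball. The paper's resolution is qualitative, not quantitative: since $P_{t/3}^f\ind_K$ is strong Feller, the composition $P_{t/3}^f\ind_K P_{t/3}^f\ind_K$ is ultra-Feller (continuous in total variation), which yields the required uniform equicontinuity with no derivative estimates; this is the missing idea.

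Second, you ask too much of Krein--Rutman. On the cone of nonnegative functions in $\Binfty_W(\X)$ the operator is not strongly positive, so the theorem only provides an eigenvector in the cone for the spectral radius, provided the spectral radius is positive --- a point you never address (the paper proves $R_t(f)\geq\alpha_K>0$ using Lemma~\ref{lem:irreducibility} and the continuity of $P_t^f\ind_K$). Simplicity of the eigenvalue, uniqueness of the positive eigenvector, and above all the convergence $\e^{-ntr(f)}P_{nt}^f\varphi\to h_f\,\ell_f(\varphi)$ do not follow from compactness plus irreducibility alone: you must rule out peripheral spectrum, i.e.\ prove an aperiodicity/spectral-gap statement. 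The paper obtains all of this at once through the $h$-transform $Q_h\varphi=\e^{-r t}h_f^{-1}P_t^f(h_f\varphi)$, shown to be geometrically ergodic in $\Binfty_{Wh_f^{-1}}(\X)$ by a Harris argument (Lyapunov function $Wh_f^{-1}$ with compact level sets plus a minorization on compacts coming from hypoellipticity and irreducibility); the ergodic limit \eqref{eq:hergodic} then gives dimension one of the eigenspace, uniqueness of the positive eigenvector, and, applied to $h_f^{-1}$, the identification $r(f)=\lambda(f)$ without ever constructing a left eigen-measure (whose strict positivity on $\ind$ you would also need to justify). Your continuous-time interpolation sketch is fine and matches the paper's ``standard arguments'' remark, but the two gaps above are genuine and are exactly the content of the paper's proof.
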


The result of Lemma~\ref{lem:specLf} is twofold: it entails the well-posedness of the principal
eigenproblem associated with~$P_t^f$ for any $f\in\Binfty_\kap(\X)$ and $t>0$, and then identifies
this principal eigenvalue with the free energy function~\eqref{eq:SCGF}.
Another consequence of this lemma is that~$h_f$ is in fact the principal eigenvector of~$\Lc+f$,
see Lemma~\ref{lem:pcpal_eig_L_f} below for a more precise statement.

\begin{proof}
  We follow the general strategy of~\cite{ferre2018more} and split the proof into several steps.

  \paragraph{Step 1: Compactness of the evolution operator.}
  We first show that, for given $t >0$
  and $f\in\Binfty_\kap(\X)$, the operator~$P_t^f$ defined in Lemma~\ref{lem:feynmankac} is
  compact when considered on~$\Binfty_W(\X)$. For any compact set~$K\subset\X$ we have the decomposition
  \begin{equation}
    \label{eq:decomp}
    P_t^f = P_{t/3}^f\ind_K P_{t/3}^f \ind_K P_{t/3}^f
  + P_{t/3}^f\ind_{K^c} P_{2t/3}^f 
  + P_{t/3}^f\ind_K P_{t/3}^f \ind_{K^c} P_{t/3}^f.
  \end{equation}
  We first consider the compact sets~$K_a$ from~\eqref{eq:compactness} for $a>0$ and time~$t/3$ (omitting
  the dependence on~$t$ in the notation since the time is fixed here) and note
  that~$\ind_{K_a^c}P_{t/3}^f$ converges to~$0$ in operator norm as $a\to +\infty$. Indeed, for
  any~$\varphi\in\Binfty_W(\X)$,~\eqref{eq:compactness} leads to
  \begin{equation}
    \label{eq:boundoperator}
  \left\| \ind_{K_a^c} P_{t/3}^f\varphi \right\|_{\Binfty_W} \leq \| \varphi\|_{\Binfty_W}\,\e^{-a t/3},
  \end{equation}
  so that $\left\| \ind_{K_a^c} P_t^f \right\|_{\mathcal{B}(\Binfty_W)}\to 0$ when $a\to +\infty$.

  We next show that $ P_{t/3}^f\ind_K P_{t/3}^f \ind_K$ is compact over~$\Binfty_W(\X)$
  for any compact set $K\subset\X$.
  Consider a sequence~$(\varphi_k)_{k\in\N}$ bounded in~$\Binfty_W(\X)$.
  Following the first step of the proof of~\cite[Lemma~2]{ferre2018more} and using our 
  strong Feller result, Lemma~\ref{lem:regularity}, we see
  that~$P_{t/3}^f\ind_K$ is a strong Feller operator, so
  $P_{t/3}^f\ind_K P_{t/3}^f \ind_K$ is ultra-Feller (see~\cite[Lemma~6]{ferre2018more}).
  This means that the operator $P_{t/3}^f\ind_K P_{t/3}^f \ind_K$ is continuous
  in total variation norm, so that the family
  $(P_{t/3}^f\ind_K P_{t/3}^f \ind_K\varphi_k)_{k\in\N}$ is uniformly equicontinuous.
  We used here that since $\varphi\in\Binfty_W(\X)$ and~$W$ is continuous, it holds
  $\ind_K \varphi \in\Binfty(\X)$.  The sequence
  $(P_{t/3}^f\ind_K P_{t/3}^f \ind_K\varphi_k)_{k\in\N}$ therefore converges
  in~$\Binfty(\X)$ up to extraction by the Ascoli
  theorem~\cite[Theorem~11.28]{rudin2006real}, and in~$\Binfty_W(\X)$ since $W\geq 1$. Therefore,
  the operator $P_{t/3}^f\ind_K P_{t/3}^f \ind_K$
  sends a bounded sequence into a convergent one (up to extraction), so it is compact
  in~$\Binfty_W(\X)$~\cite{reed1980functional}. The decomposition~\eqref{eq:decomp} and the
  bound~\eqref{eq:boundoperator} then show that~$P_t^f$ is the limit in operator norm of the compact
  operators~$P_{t/3}^f\ind_{K_a} P_{t/3}^f \ind_{K_a} P_{t/3}^f$ as $a\to +\infty$, so it is
  compact in~$\Binfty_W(\X)$ (see \textit{e.g.}~\cite[Theorem~VI.12]{reed1980functional}).

  \paragraph{Step 2: Existence of the principal eigenvalue.}
  We can now use the Krein--Rutman theorem on the (closed) total cone
  $\mathbb{K}_W = \{ \varphi \in \Binfty_W\,|\, \varphi \geq 0\}$
  (see~\cite{deimling2010nonlinear,ferre2018more} for definitions). For $t>0$, it is clear
  that~$P_t^f$ leaves this cone invariant. We next show that~$P_t^f$ has a non-zero
  spectral radius
  \[
  R_{t}(f)
    = \lim_{n\to +\infty} \big\| (P_{t}^f)^n\big\|_{\mathcal{B}(\Binfty_W)}^{\frac{1}{n}}.
  \]
  To this end, fix a compact set~$K$ with non-empty interior. We have shown
  in Lemma~\ref{lem:irreducibility} that
  \[
  \forall\,x\in K,\quad \big(P_{t}^f\ind_K\big) (x) >0.
  \]
  Since~$P_{t}^f\ind_K$ is continuous by Lemma~\ref{lem:regularity}, this shows that
  \begin{equation}
    \label{eq:alphaK}
  \alpha_K:=\min_{x\in K} \big(P_{t}^f\ind_K\big)(x) >0.
  \end{equation}
  Therefore, for any $x\in K$,
  \[
  \begin{aligned}
    \left[\left(P_t^f\right)^2\ind_K\right](x) & =
    \E_x\left[ (P_t^f\ind_K)(X_t)\,\e^{\int_0^t f(X_s)\,ds}\right]
    \geq \E_x\left[ \ind_{ K}(X_t)(P_t^f\ind_K)(X_t)\,\e^{\int_0^t f(X_s)\,ds}\right]
  \\ & \geq  \alpha_K \E_x\left[ \ind_{K}(X_t)\,\e^{\int_0^t f(X_s)\,ds}\right]
    = \alpha_K \big(P_t^f\ind_K\big)(x)
     \geq \alpha_K^2,&
  \end{aligned}
  \]
  so that $\ind_K(x)\left((P_t^f)^2\ind_K\right)(x) \geq \alpha_K^2 \ind_K(x)$ for $x\in \X$.
  Iterating the procedure for any $n\geq 1$ we get
  \[
  \left\| (P_{t}^f)^n\right\|_{\mathcal{B}(\Binfty_W)}
  \geq \left\| \ind_K (P_t^f)^n\ind_K\right\|_{\Binfty_W}
  \geq \alpha_K^n \left\|\ind_K\right\|_{\Binfty_W} = \frac{\alpha_K^n}{\inf_K W}.
  \]
  As a result, since $1\leq\inf_K W<+\infty$, we obtain in the
  large~$n$ limit the following lower bound for the spectral radius:
  \[
  R_{t}(f)  \geq \alpha_K >0,
  \]
  which shows that $R_t(f)$ is positive. Since~$P_{t}^f$ is
  compact,~\cite[Theorem~19.2]{deimling2010nonlinear} ensures that~$R_{t}(f)$
  is a real eigenvalue of~$P_{t}^f$ with associated eigenvector $h_f\in \mathbb{K}_W$ (in
  particular, $h_f\geq 0$). Using the semigroup property of~$P_t^f$ and standard
  arguments (see~\cite[Theorem~2.4]{pazy2012semigroups}), we can show
  that there exists~$r(f) \in \mathbb{R}$ such that $R_{t}(f)=\e^{r(f)t}$ and 
  \begin{equation}
    \label{eq:Ptfeigen}
    P_t^f h_f = \e^{r(f)t} h_f.
  \end{equation}

  \paragraph{Step 3: Properties of $h_f$.}
  For the remainder of the proof, we write for simplicity $r:=r(f)$ and $h:=h_f$ (the function~$f$
  being fixed). We show here that~$h$ is continuous and positive. For any
  compact~$K\subset \X$ and $t>0$,~\eqref{eq:Ptfeigen} leads to
  \[
  \begin{aligned}
  \left|P_t^f(\ind_K h) - \e^{r t} h\right| &= \left|P_t^f(\ind_K h) - P_t^f h\right|  =
  \left|P_t^f(\ind_{K^c} h)\right| 
   =  \left|P_t^f\big(\ind_{K^c} \e^{- r t} P_t^f h\big)\right|
  \\ & \leq \e^{- r t}
  \| h\|_{\Binfty_W} \|P_t^f\|_{\mathcal{B}(\Binfty_W)} \left\|\ind_{K^c} P_t^f W\right\|_{\Binfty_W} |W|.
  \end{aligned}
  \]
  Using Lemma~\ref{lem:feynmankac} we obtain that, for any~$a>0$, there exists a compact
  set~$K_a$ such that
  \[
  \big\|\e^{-r t}P_t^f(\ind_{K_a} h) -  h\big\|_{\Binfty_W}\leq C \e^{-a t}\quad
  \mathrm{with}
  \quad C =  \e^{- 2 r t} 
  \| h\|_{\Binfty_W} \|P_t^f\|_{\mathcal{B}(\Binfty_W)},
  \]
  so that~$h$ is continuous as the uniform limit of continuous functions (since~$P_t^f(\ind_{K_a} h)$
  is continuous by Lemma~\ref{lem:regularity}). Finally, since $h\geq 0$ and~$h$ is
  not identically equal to~$0$, there exists $x_0\in\X$ such that $h(x_0) >0$. Moreover~$h$ is
  continuous, so there is $\varepsilon>0$ for which $h>0$ on $B(x_0,\varepsilon)$.
  By~\eqref{eq:Ptfeigen} it holds, for any $x\in\X$,
  \[
  \e^{rt}h(x) = (P_t^fh)(x) \geq P_t^f \big(h\ind_{B(x_0,\varepsilon)}\big)(x) 
  \geq \left(\inf_{B(x_0,\varepsilon)}h\right) \big(P_t^f\ind_{B(x_0,\varepsilon)}\big)(x). 
  \]
  Since $h>0$ on $B(x_0,\varepsilon)$ and~$h$ is continuous,
  $\inf_{B(x_0,\varepsilon)}h>0$. Moreover $(P_t^f\ind_{B(x_0,\varepsilon)})(x) >0$ for
  any~$x\in\X$ by Lemma~\ref{lem:irreducibility}, so the previous lower bound shows 
  that $h(x) >0$ for all $x\in\X$.
  
  \paragraph{Step 4: Properties of eigenspaces and eigenfunctions.}
  We now show that the eigenspace associated with~$h$ is of dimension one, 
  and that any other eigenvector vanishes somewhere in~$\X$.
  For this, we introduce the so called $h$-transform~\cite{kontoyiannis2005large,rousset2006control,chetrite2015nonequilibrium,ferre2018more}. A key element here
  is the fact that~$h(x)>0$ for all~$x\in\X$, which allows to define the following Markov
  operator, for an arbitrary time~$t>0$:
  \begin{equation}
    \label{eq:Q_h}
  Q_h\varphi = \e^{-r t}h^{-1}P_t^f (h\varphi),
  \end{equation}
  where~$h$ and~$h^{-1}$ refer here to the multiplication operators by the functions~$h$
  and~$h^{-1}$ respectively.
  We now prove that~$Q_h$ is ergodic by first
  noting that~$Q_h$ admits~$Wh^{-1}$ as a Lyapunov function
  (using~\eqref{eq:compactness} and the normalization $\|h\|_{\Binfty_W}=1$ which implies
  that $Wh^{-1}\geq 1$). Using Assumption~\ref{as:lyapunov}, we can also show that~$Wh^{-1}$ has
  compact level sets, see~\cite[Appendix~E]{ferre2018more} for details.

  Moreover, we can prove that~$Q_h$ satisfies a minorization condition on any compact set.
  For this, we first use that $P_t^f \geq P_t^{-\|f\|_{\Binfty_\kappa} \kappa}$. Then, for any $t>0$ and $\alpha \geq 0$,
  the operator $P_t^{-\alpha \kappa}$ has
  a smooth transition density by hypoellipticity (because~$\kappa$ and the coefficients
  of~$\Lc$ belong to the class~$\mathscr{S}$, see~\cite[Theorem~4.1]{eckmann2003spectral}),
  which is
  positive in view of Lemma~\ref{lem:irreducibility} by an argument similar to the one sketched after
  Assumption~\ref{as:control} (see for instance the proof of~\cite[Proposition~8.1]{bellet2006ergodic}).
  Therefore, for any $K\subset\X$ compact with non-empty interior, and denoting by~$\eta_K$ the uniform
  Lebesgue measure on~$K$, there is~$a_K>0$ such that,
  for any measurable set~$A\subset\X$,
  \[
  \forall\,x\in K,\quad  \big(P_t^f\ind_A\big)(x) \geq \left(P_t^{-\|f\|_{\Binfty_\kappa} \kappa}\ind_A\right)(x)
  \geq a_K\eta_K(A).
  \]
  Since~$h$ is continuous, this implies that, for any measurable~$\varphi \geq 0$,
  \[
  \forall\,x\in K,\quad 
  \big(Q_h\varphi\big)(x) \geq \frac{|K|a_K\min_K h}{\max_K h}\frac{\eta_K(\varphi)}{|K|},
  \]
  where both the minimum and maximum above are finite and non-zero (recall that~$|K|>0$
  is the Lebesgue measure of~$K$).
  This shows that~$Q_h$ satisfies a minorization condition~\cite{hairer2011yet} over any
  compact set.
  
  Therefore, the Markovian dynamics with kernel~$Q_h$ admits a unique invariant probability
  measure~$\mu_h$, with respect to which it is ergodic in~$\Binfty_{Wh^{-1}}(\X)$. By this we mean
  that (in view of~\cite[Theorem~1.2]{hairer2011yet}) there exist $\bar{\alpha} >0$ and $C>0$
  such that for any $\varphi\in\Binfty_{Wh^{-1}}(\X)$, 
  \begin{equation}
    \label{eq:hergodic}
    \forall\,n\geq 1,\quad
    \big\| (Q_h)^n\varphi - \mu_h(\varphi)\big\|_{\Binfty_{Wh^{-1}}}\leq C \e^{-\bar{\alpha} n} 
  \|\varphi - \mu_h(\varphi)\|_{\Binfty_{Wh^{-1}}},
  \end{equation}
  and it holds $\mu_h(W/h)<+\infty$.

  We can now use this ergodic behaviour to show that the eigenspace associated with~$r$
  has dimension one and that~$P_t^f$ cannot have another positive eigenvector with norm~$1$
  in~$\Binfty_W(\X)$.
  Indeed, if there were another eigenvector $\tilde{h}\in\Binfty_W(\X)$ associated with~$r$,
  then the fact that $\tilde{h}/h \in \Binfty_{Wh^{-1}}(\X)$ together with~\eqref{eq:hergodic}
  ensure that
  \[
  (Q_h)^n\left(\frac{\tilde{h}}{h}\right) = \frac{\tilde{h}}{h}\xrightarrow[n\to +\infty]{}
  \mu_h\left(\frac{\tilde{h}}{h}\right).
  \]
  This shows that~$h$ and~$\tilde{h}$ would be proportional, and answers the claim that
  the eigenspace associated with~$r$ has
  dimension~$1$. Assume now that there is another real eigenvalue $\tilde{r}<r$ with
  real eigenvector $\tilde{h} \in\Binfty_W(\X)$ such that $\tilde{h}(x) >0$ for all $x\in\X$.
  Noting again that $\tilde{h}/h \in  \Binfty_{Wh^{-1}}(\X)$ and since
  $\tilde{h} >0$,~\eqref{eq:hergodic} shows that, for any $x\in\X$,
  \begin{equation}
    \label{eq:zeroergo}
  (Q_h)^n\left(\frac{\tilde{h}}{h} \right)(x) \xrightarrow[n\to +\infty]{}
  \mu_h\left(\frac{\tilde{h}}{h}\right) > 0.
  \end{equation}
  However it now holds, for any $x\in\X$,
  \[
  (Q_h)^n\left(\frac{\tilde{h}}{h} \right)(x) = \e^{(\tilde{r}-r)tn} \frac{\tilde{h}}{h}(x)
   \xrightarrow[n\to +\infty]{} 0,
  \]
  where we used that $h>0$ and $\tilde{r}<r$. Combining the two
  equations above shows that
  \[
  \mu_h\left(\frac{\tilde{h}}{h}\right)=0,
  \]
  which contradicts~\eqref{eq:zeroergo}. As a
  result, there cannot be another eigenvalue with a positive eigenvector.

  \paragraph{Step 5: The principal eigenvalue is the cumulant function.}
  Proving~\eqref{eq:SCGFspec} now follows by a simple rewriting. For $x\in\X$ and
  $t_0 >0$ fixed, it holds, for any $n\in\N^*$,
  \[
  \E_x\left[ \e^{\int_0^{nt_0} f(X_s)\, ds}\right] =\left[ (P_{t_0}^f)^n\ind\right](x)
  = \e^{rnt_0}\big[h (Q_h)^n h^{-1}\big](x),
  \]
  so that
  \[
  \frac{1}{nt_0}\log\E_x\left[ \e^{\int_0^{nt_0} f(X_s)\, ds}\right] =
  \frac{1}{nt_0} \log\left[\e^{ rnt_0 } h (Q_h)^n h^{-1}(x)\right]
  = r + \frac{1}{nt_0} \log\left[ h (Q_h)^n h^{-1}(x)\right].
  \]
  By~\eqref{eq:hergodic} (since $h^{-1}\in\Binfty_{Wh^{-1}}(\X)$), we see that
  $\left(h (Q_h)^n h^{-1}\right)(x)$ converges to~$\mu_h(h^{-1}) h(x)$ (with~$x$ fixed), so that
  \[
  r(f)=\lim_{n\to+\infty}\ \frac{1}{nt_0}\log\E_x\left[ \e^{\int_0^{nt_0} f(X_s)\, ds}\right].
  \]
  We have chosen to work with an arbitrary time $t_0>0$ for convenience, so a priori
  the above limit depends on~$t_0$. To conclude the proof, it remains to show that the limit actually does not depend on
  the specific choice of~$t_0$ and that
  \[
  r(f)=\lim_{t\to+\infty}\ \frac{1}{t}\log\E_x\left[ \e^{\int_0^{t} f(X_s)\, ds}\right].
  \]
  This extension from~$t_0>0$ fixed to any~$t>0$ follows by standard arguments not reproduced here
  (see \textit{e.g.}~\cite{herzog2017ergodicity,ferre2018more}).  
\end{proof}

An important ingredient for the lower bound of the LDP is the Gateau-differentiability of the
cumulant functional, which we prove below.

\begin{lemma}
  \label{lem:miam}
  The functional
  \begin{equation}
    \label{eq:cumulantgateau}
  f\in\Binfty_\kap(\X)\mapsto \lambda(f) = \underset{t\to + \infty}{\lim}\,
  \frac{1}{t} \log \E_x\left[ \e^{\int_0^t f(X_s)\, ds} \right]
  \end{equation}
  is convex and Gateau-differentiable.
\end{lemma}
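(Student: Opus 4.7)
The plan is to use the spectral identification $\lambda(f)=r(f)$ obtained in Lemma~\ref{lem:specLf} and combine it with analytic perturbation theory for the family of Feynman--Kac operators. Convexity will be immediate from Hölder's inequality, while Gateau-differentiability will follow from the simplicity of the principal eigenvalue of $P_t^f$.

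For convexity, given $f_0, f_1 \in \Binfty_\kap(\X)$ and $\alpha \in (0,1)$, Hölder's inequality directly gives
\[
\E_x\left[\e^{\int_0^t (\alpha f_1 + (1-\alpha)f_0)(X_s)\,ds}\right] \leq \E_x\left[\e^{\int_0^t f_1(X_s)\,ds}\right]^{\alpha}\,\E_x\left[\e^{\int_0^t f_0(X_s)\,ds}\right]^{1-\alpha}.
\]
Taking the logarithm, dividing by $t$, and passing to the limit (the limits exist by Lemma~\ref{lem:specLf}) yields $\lambda(\alpha f_1 + (1-\alpha)f_0) \leq \alpha\lambda(f_1) + (1-\alpha)\lambda(f_0)$.

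For differentiability, I fix $f,g\in\Binfty_\kap(\X)$ and study the family $\epsilon \mapsto P_t^{f+\epsilon g}$ on $\Binfty_W(\X)$. Expanding the exponential in $\epsilon$ gives the formal series
\[
P_t^{f+\epsilon g} = \sum_{n=0}^\infty \frac{\epsilon^n}{n!}\,A_n, \qquad A_n\varphi(x) = \E_x\left[\varphi(X_t)\,\e^{\int_0^t f(X_s)\,ds}\left(\int_0^t g(X_s)\,ds\right)^n\right].
\]
Using the elementary bound $a^n \leq n!\,\alpha^{-n}\e^{\alpha a}$ valid for any $\alpha>0$, together with $|g| \leq \|g\|_{\Binfty_\kap}\kap$, one controls $\|A_n/n!\|_{\mathcal{B}(\Binfty_W)}$ by $C_\alpha\,(\|g\|_{\Binfty_\kap}/\alpha)^n$ as in Lemma~\ref{lem:feynmankac}, where $C_\alpha$ is finite because $\alpha\kap \ll \Ps$ for every $\alpha>0$. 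Since $\alpha$ is arbitrary, the series has infinite radius of convergence, so $\epsilon \mapsto P_t^{f+\epsilon g}$ is an entire analytic family of compact operators on~$\Binfty_W(\X)$. Lemma~\ref{lem:specLf} shows that the principal eigenvalue~$\e^{tr(f)}$ is simple and isolated (its eigenspace has dimension one and no other eigenvalue of the same modulus admits a positive eigenvector). Kato's analytic perturbation theory then implies that $\epsilon \mapsto r(f+\epsilon g) = \lambda(f+\epsilon g)$ is analytic near $\epsilon=0$, so the two-sided directional derivative $\lambda'(f)[g]$ exists. Linearity in $g$ is automatic: the map $g\mapsto\lambda'(f)[g]$ is sublinear by convexity, and the existence of the two-sided limit implies $\lambda'(f)[-g]=-\lambda'(f)[g]$; applying the subadditivity inequality to both $g_1+g_2$ and $-g_1-g_2$ forces additivity, while positive homogeneity upgrades to full homogeneity.

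The main obstacle is the operator-norm analyticity of $\epsilon\mapsto P_t^{f+\epsilon g}$: this is where the condition $\kap \ll \Ps$ is genuinely used, as it is what allows exponential moments of $\alpha\int_0^t \kap(X_s)\,ds$ to remain finite for every $\alpha>0$, and thereby guarantees infinite radius of convergence. The simplicity and isolation of the principal eigenvalue needed to invoke analytic perturbation theory are already contained in the $h$-transform analysis of Lemma~\ref{lem:specLf}, so no additional spectral work is required beyond that step.
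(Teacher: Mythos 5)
Your proof is correct and follows essentially the same route as the paper: convexity via H\"older's inequality, and Gateau-differentiability by identifying $\lambda(f+\varepsilon g)$ with the isolated, simple principal eigenvalue of $P_t^{f+\varepsilon g}$ from Lemma~\ref{lem:specLf} and invoking Kato's perturbation theory once operator-norm regularity of $\varepsilon\mapsto P_t^{f+\varepsilon g}$ is obtained from the same martingale bounds (using $\kap\ll\Ps$). The only difference is one of degree: the paper proves just a first-order Taylor expansion with an $\mathrm{O}(\alpha^2)$ remainder in operator norm, whereas you establish full analyticity of the operator family (which the paper only records in the remark following the lemma) and additionally spell out the linearity of the directional derivative.
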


\begin{proof}
  The convexity of~$\lambda$ is a standard consequence of Hölder's inequality.
  Concerning Gateau-differentiability,
  we follow the strategy of~\cite[Section~3]{gartner1977large} for a compact state space,
  relying on results of Kato~\cite{kato2013perturbation}. For this, we interpret
  the cumulant function~\eqref{eq:cumulantgateau} as the largest eigenvalue of the tilted
  generator,~$r(f)$, as shown in Lemma~\ref{lem:specLf}. More precisely,
  for $f,g\in\Binfty_\kap(\X)$ and~$\alpha\in\R$, $\lambda(f+\alpha g)$
  is associated with the largest eigenvalue of the operator~$P_t^{f+\alpha g}$ 
  in~$\Binfty_W(\X)$ through
  \[
  P_t^{f+\alpha g} h_{f+\alpha g}= \e^{t \lambda (f+\alpha g)} h_{f+\alpha g},
  \]
  so that derivability in~$\alpha$ can be shown through the differentiability of the
  spectrum of a bounded operator. We thus show that the operator-valued function~$\alpha \mapsto P_t^{f+\alpha g}$
  is differentiable in operator norm.

  To this end, we fix $C>0$, and prove that for 
  $|\alpha|\leq C$, there exists $K \in \mathbb{R}_+$ such that 
  \begin{equation}
    \label{eq:taylorP}
    \left\| P_t^{f+\alpha g} - P_t^f - \alpha Q_t^{f,g}\right\|_{\mathcal{B}(\Binfty_W(\X))} \leq K\alpha^2, 
  \end{equation}
  where 
  \[
    Q_t^{f,g}:\varphi\in\Binfty_W(\X)\mapsto \E_x\left[ \varphi(X_t)\left(\int_0^t
        g(X_s)\,ds\right) \, \e^{\int_0^t f(X_s)\, ds}\right].
  \]
  Note that the operator $Q_t^{f,g}$ is bounded on~$\Binfty_W(\X)$ by the same
  martingale estimate used to prove Lemma~\ref{lem:diffbound}. In order to prove~\eqref{eq:taylorP}, we use the identity
  \[
    0 \leq \left|\mathrm{e}^b - 1 - b \right| \leq \frac{b^2}{2} \mathrm{e}^{|b|} 
  \]
  to obtain, for any~$\varphi\in\Binfty_W(\X)$ and $x\in\X$,
  \[
    \begin{aligned}
      & \left| \left[\left(P_t^{f+\alpha g} - P_t^f - \alpha Q_t^{f,g}\right)\varphi\right](x) \right| \\
      & \qquad \leq  \| \varphi\|_{\Binfty_W} \E_x\left[ W(X_t) \, \e^{\int_0^t f(X_s)\, ds} \left|\e^{\alpha\int_0^t g(X_s)\,ds}-1-\alpha\int_0^t g(X_s)\,ds\right|\right] \\
      & \qquad \leq  \frac{\alpha^2}{2} \| \varphi\|_{\Binfty_W} \E_x\left[ W(X_t) \, \e^{\left(\|f\|_{\Binfty_\kappa}+\alpha\|g\|_{\Binfty_\kappa}\right)\int_0^t \kappa(X_s)\, ds} \left( \int_0^t g(X_s)\,ds \right)^2\right] \\
      & \qquad \leq  \alpha^2 \| \varphi\|_{\Binfty_W} \E_x\left[ W(X_t) \, \e^{\left(\|f\|_{\Binfty_\kappa}+(1+\alpha)\|g\|_{\Binfty_\kappa}\right)\int_0^t \kappa(X_s)\, ds}\right], \\
    \end{aligned}
  \]
  where we used the inequality $z^2/2 \leq \e^z$ for $z\geq0$
  in the last line. By manipulations similar to the one used to prove
  Lemma~\ref{lem:diffbound}, we can bound the latter expectation by~$\e^{ct}W(x)$ for
  some constant $c>0$, which leads to~\eqref{eq:taylorP} with $K = \e^{ct}$.
  
  Equation~\eqref{eq:taylorP} shows that $\alpha\mapsto P_t^{f+\alpha g}$ is differentiable in operator
  norm, and that
  \[
    \frac{d}{d\alpha}\Big|_{\alpha = 0}P_t^{f+\alpha g} = Q_t^{f,g}.
  \]
  Thus, the principal eigenvalue $\lambda(f +\alpha g)$,
  which is always isolated, is differentiable,
  see~\cite[Chapter~II, Theorem~5.4]{kato2013perturbation}
  and~\cite[Chapter~IV, Theorem~3.5]{kato2013perturbation}. This concludes the proof
  of Gateau-differentiability.
\end{proof}

\begin{remark}
  By pursuing further the Taylor expansion~\eqref{eq:taylorP} in the proof of Lemma~\ref{lem:miam},
  we can actually show that, for any $f,g\in\Binfty_\kap(\X)$, the function
  \[
  \alpha \in\C \mapsto \lambda (f + \alpha g)
  \]
  is analytic (this analyticity was
  already proven in~\cite{kontoyiannis2005large} using a different argument that can be simplified
  with our tools). This relies on the simple inequality~$a^n/n! \leq \e^a$ for any~$a\geq 0$,
  together with the series expansion of the exponential and martingale estimates as in
  the proof of Lemma~\ref{lem:miam}. Indeed, our proof, based on martingales, shows that for
  any~$t>0$, the function
  \[
  \alpha \mapsto \frac{1}{t}\log \E_x \left[ \e^{\int_0^t \big(f(X_s)+\alpha g(X_s)
        \big)\,ds} \right]
  \]
  is analytic. Moreover, it is finite on~$\R$ and converges pointwise to a finite valued
  function as $t\to +\infty$, as shown in Lemma~\ref{lem:specLf}. Therefore, the convergence
  holds uniformly on any compact as~$t\to +\infty$ (see~\cite[Theorem~VI.3.3]{ellis2007entropy}).
  Since a locally uniform limit of analytic functions is analytic
  (see~\cite[Theorem~10.28]{rudin2006real}), the function $\alpha \mapsto \lambda(f + \alpha g)$ is analytic.
\end{remark}

The last step before proving the large deviations principle itself is an exponential
tightness result, see~\cite[Section~1.2]{dembo2010large}.
At this stage, the finiteness of~$\lambda(f)$ together with the Gateau-differentiability
of $f\in\Binfty_\kap(\X)\mapsto\lambda(f)$
already provides the upper bound over compact sets and the lower bound in~\eqref{eq:LDP}.
In order to extend the upper bound to all closed sets, we prove exponential tightness in
the~$\tau^\kap$-topology, see Appendix~\ref{sec:tools} for some definitions (this
exponential tightness is not explicitely stated in~\cite{kontoyiannis2005large}).
\begin{lemma}
  \label{lem:tightness}
  The family of probability measures $t\mapsto \proba_x ( L_t \in\cdot\, )$
  over~$\mathcal{P}(\X)$ is exponentially tight in the $\tau^\kap$-topology. 
\end{lemma}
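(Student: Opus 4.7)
The plan is to exhibit explicit $\tau^\kap$-compact subsets of $\mathcal{P}(\X)$ whose complements are charged with exponentially small probability by $L_t$. The natural candidates are the sublevel sets of the functional $\nu \mapsto \nu(\Ps)$, since~$\Ps$ is precisely the growth function controlling~$\kap$ through $\kap \ll \Ps$.

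For the compactness part, I would first show that for every $R>0$, the set
\[
K_R = \left\{\nu \in \mathcal{P}(\X) \ \middle|\ \nu(\Ps) \leq R\right\}
\]
is $\tau^\kap$-compact. Since~$\Ps$ has compact level sets by Assumption~\ref{as:lyapunov}, any sequence $(\nu_n) \subset K_R$ is tight in the usual sense (Prokhorov), so a subsequence converges weakly to some $\nu \in \mathcal{P}(\X)$ with $\nu(\Ps) \leq R$ by lower semicontinuity. To upgrade this weak convergence to $\tau^\kap$-convergence, I would use the relation $\kap \ll \Ps$: for every $\varepsilon>0$ there exists a compact set $K_\varepsilon \subset \X$ such that $\kap \leq \varepsilon \Ps$ on $K_\varepsilon^c$, yielding the uniform tail bound $\sup_n \nu_n(\kap\,\ind_{K_\varepsilon^c}) \leq \varepsilon R$. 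A standard uniform integrability argument (in the spirit of the compactness criteria used in~\cite{wu2001large,kontoyiannis2005large}) then transfers the convergence to all test functions $f\in\Binfty_\kap(\X)$.

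For the exponential control, I would invoke the supermartingale $M_t = W(X_t)\,\e^{\int_0^t \Ps(X_s)\,ds}$ from Lemma~\ref{lem:martingales} (recall $\Ps = -\Lc W/W$). Its supermartingale property yields $\E_x[M_t] \leq W(x)$, and since $W \geq 1$,
\[
\E_x\!\left[\e^{\int_0^t \Ps(X_s)\,ds}\right] \leq W(x).
\]
Tchebychev's inequality then gives, for any $M > 0$,
\[
\proba_x\big(L_t \notin K_M\big) = \proba_x\!\left(\int_0^t \Ps(X_s)\,ds > Mt\right) \leq \e^{-Mt} W(x),
\]
hence $\limsup_{t \to +\infty} t^{-1} \log \proba_x(L_t \notin K_M) \leq -M$. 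Since $M > 0$ is arbitrary and each $K_M$ is $\tau^\kap$-compact, exponential tightness follows.

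The main obstacle is the $\tau^\kap$-compactness in the first step: since the $\tau^\kap$-topology involves testing against all measurable (not merely continuous) $\kap$-dominated functions, one cannot rely on weak compactness alone. The essential ingredient is the domination $\kap \ll \Ps$ from Assumption~\ref{as:lyapunov}, which furnishes the uniform $\kap$-integrability needed to bridge weak convergence and $\tau^\kap$-convergence. The exponential estimate itself, by contrast, is a direct and clean consequence of the Lyapunov martingale already established in Lemma~\ref{lem:martingales}.
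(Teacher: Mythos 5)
Your proposal is correct and follows essentially the same route as the paper: the same sublevel sets $\{\nu : \nu(\Ps)\leq N\}$, shown $\tau^\kap$-(pre)compact by combining Prohorov tightness (from the compact level sets of~$\Ps$) with uniform $\kap$-integrability furnished by $\kap\ll\Ps$, and the same exponential bound via Tchebychev together with the supermartingale estimate $\E_x[\e^{\int_0^t\Ps(X_s)\,ds}]\leq \E_x[M_t]\leq W(x)$ from Lemma~\ref{lem:martingales}. The only cosmetic difference is that you phrase the uniform integrability through tails over compacts of~$\X$ rather than level sets of~$\kap$ as in the paper, which serves the same purpose.
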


\begin{proof}
  We adapt the strategy of~\cite[Corollary~2.3]{wu2001large} and~\cite[Section~2.2]{wang2010sanov}
  by introducing the family of sets
  \[
  \Gamma_{N} = \big\{ \nu\in\mathcal{P}(\X)\, \big| \, \nu(\Ps) \leq N\big\},
  \quad N>0.
  \]
  For $N>0$, the sets~$\Gamma_N$ are subsets of $\mathcal{P}_\kap(\X)$ since $\kap\ll \Ps$.
  We show that they are actually precompact in the $\tau^\kap$-topology.

  Let us first show that~$\Gamma_{N}$ is precompact in the usual weak topology
  for any~$N>0$.  Consider for this the compact
  sets~$K_\beta =\{ x\in\X \, |\, \Ps(x)\leq \beta\}\subset \X$ for $\beta>0$
  (recall that~$\Ps$ has compact level sets). Then, for any~$\nu\in \Gamma_{N}$, we have
  \[
  \beta \nu(K_\beta^c) +  \nu( \Ps \ind_{K_\beta}) \leq  \nu( \Ps \ind_{K_\beta^c})
  +  \nu( \Ps \ind_{K_\beta})=\nu(\Ps) \leq N.
  \]
  This shows that for any~$\beta >0$ and any~$\nu\in\Gamma_{N}$,
  \[
  \nu(K_\beta^c) \leq \frac{N}{\beta},
  \]
  hence (upon choosing~$\beta$ sufficiently large) for any $N>0$ the family of
  measures~$\Gamma_{N}$ is tight, so it is precompact
  for the weak topology by the Prohorov theorem~\cite{billingsley2013convergence}. Now,
  if~$\kap$ is bounded,~$\Gamma_N$ is tight for the $\tau^\kap$-topology and the theorem is shown,
  so we may assume that~$\kap$ has compact level sets (see Assumption~\ref{as:lyapunov}).
  For proving compactness in our weighted topology, we show that~$\kap$ is
  uniformly integrable over~$\Gamma_{N}$ in order to
  use~\cite[Theorem~7.12]{villani2003topics}. Since $\kap\ll \Ps$, the set
  \[
  A_n=\left\{ x\in\X\, \left| \, \frac{\Ps(x)}{\kap(x)}\leq n\right.\right\}
  \]
  is compact for any~$n\geq 1$. Moreover, since we assume~$\kap$ to be continuous with compact
  level sets, for any $n\geq 1$ there exists $m_n \geq n$ such that
  \[
  \left\{ \frac{\Ps}{\kap} \leq n\right\} \subset \{ \kap\leq m_n\},
  \]
  with $m_n\to +\infty$ when $n\to +\infty$.
  Therefore, for any $\nu\in\Gamma_{N}$ and $n\geq 1$,
  \[
  \int_{\{\kap > m_n\}}\kap \,d\nu\leq
  \int_{A_n^c} \kap \,d\nu = \frac{1}{n}  \int_{A_n^c}n \kap \,d\nu\leq
  \frac{1}{n} \int \Ps\,d\nu = \frac{1}{n}\nu(\Ps)
  \leq \frac{N}{n}.
  \]
  Taking the supremum over $\nu\in\Gamma_N$  in the above equation
  and recalling that $m_n\to +\infty$ when $n\to +\infty$ we obtain
  \begin{equation}
    \label{eq:equiint}
  \lim_{m\to +\infty}\, \sup_{\nu\in\Gamma_N}\,\int_{\{\kap>m\}} \kap\,d\nu = 0.
  \end{equation}
  We can then conclude that~$\Gamma_N$ is precompact for the $\tau^\kap$-topology. Consider indeed
  a sequence $(\nu_n)_{n\in\N}\subset\Gamma_N$. By Prohorov's theorem,~$(\nu_n)_{n\in\N}$ has
  a subsequence weakly converging towards a measure~$\nu$, \textit{i.e.}
  $\nu_n(\varphi)\to\nu(\varphi)$
  for any~$\varphi\in\Cb(\X)$. Then, by~\cite[Theorem~7.12]{villani2003topics},~\eqref{eq:equiint}
  ensures that~$\nu\in\mathcal{P}_\kap(\X)$ and for any
  $f\in\Binfty_\kap(\X)$, $\nu_n(f)\to\nu(f)$ as $n\to +\infty$. In other
  words,~$\Gamma_N$ is precompact for the $\tau^\kap$-topology.

  We can now prove the $\tau^\kap$-exponential tightness of the empirical
  distribution~$(L_t)_{t\geq 0}$ in~$\mathcal{P}(\X)$. Indeed, for any~$N,t>0$,
  Tchebytchev's inequality leads to
  \[
  \proba_x\big( L_t \in \Gamma_{N}^c \big)  = \proba_x
  \left( \int_0^t \Psi(X_s)\, ds >  N t \right)
   \leq \e^{- N t} \E_x \left[ \e^{\int_0^t  \Ps(X_s)\, ds}\right]
   =\e^{-  N t} \left(P_t^{ \Ps}\ind\right)(x).
  \]
  Renormalizing at log scale leads to
  \begin{equation}
    \label{eq:neartight}
  \underset{t\to +\infty}{\limsupb}\ \frac{1}{t} \log\, \proba_x\big( L_t \in \Gamma_{N}^c \big)
  \leq -  N +\underset{t\to +\infty}{\limsupb}\ \frac{1}{t} \log \left[\left(P_t^{\Ps}\ind\right)(x)\right].
  \end{equation}
  The right hand side of the above quantity may look infinite since~$\Ps$ grows faster
  than~$\kap$. However, using again the martingale~$M_t$ defined in Lemma~\ref{lem:martingales}
  we obtain, for any~$t>0$,
  \[
  \E_x\left[\e^{\int_0^t\Ps(X_s)\,ds}\right]\leq
  \E_x\left[W(X_t)\,\e^{-\int_0^t\frac{\Lc W}{W}(X_s)\,ds}\right] = \E_x[M_t]\leq W(x).
  \]
  Thus it holds
  \[
  \underset{t\to +\infty}{\limsupb}\ \frac{1}{t} \log \left[\left(P_t^{\Ps}\ind\right)(x)\right]
  \leq \underset{t\to +\infty}{\limsupb}\ \frac{1}{t} \log
  \E_x\left[W(X_t)\,\e^{\int_0^t\Ps(X_s)\,ds}\right]\leq \underset{t\to +\infty}{\limsupb}\ \frac{1}{t}
  \log W(x) = 0.
  \]
  As a result,~\eqref{eq:neartight} becomes
  \[
    \underset{t\to +\infty}{\limsupb}\ \frac{1}{t} \log\, \proba_x\big( L_t \in \Gamma_{N}^c \big)
  \leq -  N.
  \]
  Since~$\Gamma_N$ is precompact in the~$\tau^\kap$-topology
  for any $N>0$, and~$N$ can be chosen arbitrarily large, this proves the exponential tightness
  of the family of empirical distributions in the $\tau^\kap$-topology.
\end{proof}

We are now in position to prove Theorem~\ref{theo:LDP}. 

\begin{proof}[Proof of Theorem~\ref{theo:LDP}]
  The previous lemmas make it possible to apply the Gärtner--Ellis
  theorem (recalled in Appendix~\ref{sec:tools}). The function~$\Lambda$ in Theorem~\ref{theo:GE} of Appendix~\ref{sec:tools}
  is the cumulant function
  \[
  \lambda : f\in \Binfty_\kap(\X)\mapsto \lim_{t\to +\infty} \frac{1}{t}\log \E_x
  \left[ \e^{\int_0^t f(X_s)\, ds}\right].
  \]
  The topological dual
  of $(\mathcal{M}_\kap(\X),\tau^\kap)$ is~$\Binfty_\kap(\X)$, where~$\mathcal{M}_\kap(\X)$ is
  the set of measures over~$\X$ integrating~$\kap$
  (see~\cite{rudin2006real,kontoyiannis2005large} and~\cite[Lemma~3.3.8]{deuschel2001large} for
  details). We have proved that~$\lambda$ is well defined,
  Gateau-differentiable, and that the family of measures
    \[
    t\mapsto    \pi_t(\,\cdot\, ) := \mathbb{P}_x\left( L_t \in \cdot\ \right),
    \]
    is exponentially tight in the $\tau^\kap$-topology. Therefore,~$(\pi_t)_{t\geq 0}$ satisfies
    a large deviations principle in the $\tau^\kap$-topology with good rate
    function given by
    \begin{equation}
      \label{eq:Ifproof}
      \forall\,\nu\in \mathcal{M}(\X), \quad
      I(\nu) = \sup_{f\in\Binfty_\kap} \big\{ \nu(f) - \lambda(f)\big\}.
    \end{equation}
    Note first that $I(\nu) \geq 0$. We next observe that $I(\nu)=+\infty$ if~$\nu$ is not
    normalized to~1 (take~$f$ to be constant in the supremum~\eqref{eq:Ifproof}), so we may
    consider~$I$ over~$\mathcal{P}(\X)$.
    Moreover, choosing $f=\kap$ in~\eqref{eq:Ifproof} and noting that $\lambda(\kap)<+\infty$ by Lemma~\ref{lem:specLf},
    we get $I(\nu)=+\infty$ if $\nu\notin\mathcal{P}_\kap(\X)$. If~$\nu$ is not absolutely
    continuous with respect to~$\mu$, there exists a measurable set~$A\subset\X$ such that $\mu(A) = 0$ and
    $\nu(A)>0$. Since~$\mu$ has
    a positive density with respect to the Lebesgue measure, this means that~$A$ has zero Lebesgue measure. Consider then
    $f_a = a\ind_A\in\Binfty_\kap(\X)$ for $a \in\R$. Since~$A$ has zero Lebegue measure
    and~$(X_t)_{t\geq 0}$ has a smooth density for all $t>0$ (as a consequence of
    Assumption~\ref{as:regularity}) it holds, for all $t>0$, 
    \[
    \E_x\big[f_a(X_t)\big] = a \proba_x \big( X_t \in A\big)=0.
    \]
    Therefore, the process
    \[
    Z_t = \int_0^t f_a(X_s)\,ds,
    \]
    satisfies $\E_x[Z_t]=0$ for all~$t>0$. Since $Z_t\geq 0$, it holds $Z_t=0$ almost
    surely, for any $t>0$. As a consequence we obtain
    \[
    \forall\,t>0,\quad \frac{1}{t}\log \E_x\left[ \e^{\int_0^t f_a(X_s)\,ds}\right]
    = \frac{1}{t}\log \E_x\left[ \e^{Z_t}\right]
    =0.
    \]
    This shows that $\lambda(f_a)=0$, so that from~\eqref{eq:Ifproof} we obtain
    \[
    I(\nu)\geq a \nu(A),
    \]
    with~$\nu(A)>0$. By letting $a\to +\infty$  we are led to $I(\nu)=+\infty$.
    
    Finally, we show that $I(\nu)=0$ if and only if $\nu=\mu$, and that~$(L_{t_n})_{n\geq 0}$ converges
    almost surely to~$\mu$ in the $\tau^\kap$-topology for any sequence~$(t_n)_{n\geq 0}$
    such that $t_n/\log(n)\to +\infty$ (see~\cite[Appendix~B]{dembo2010large}
    for the definition of this almost-sure convergence). Define 
    \[
    \mathscr{I}=\left\{\nu\in\mathcal{P}(\X)\,\left|\,
    I(\nu)=\inf_{\mathcal{P}(\X)}\,I\right. \right\}.
    \]
    Since~$I$ has compact level sets (because it is a good rate function, see
    Theorem~\ref{theo:GE}),~$\mathscr{I}$ is a non-empty closed subset of~$\mathcal{P}(\X)$ for the $\tau^\kap$-topology. Moreover, in order for the
    LDP upper bound to make sense, it holds $\inf_{\mathcal{P}(\X)}\,I = 0$.
    If~$\mathscr{I}_\delta$ denotes an open neighborhood of~$\mathscr{I}$,
    the lower semicontinuity of~$I$ implies that 
    \[
    \inf_{\mathscr{I}_\delta^c}\, I >0.
    \]
    Therefore, by the large deviations upper bound we have, for any $t\geq 0$,
    \begin{equation}
      \label{eq:expoineq}
        \proba_x\big( L_t \notin \mathscr{I}_\delta \big)
        = \proba_x\big( L_t \in \mathscr{I}_\delta^c\big)\leq C \,
        \exp\left(- t \inf_{\mathscr{I}_\delta^c}\,I\right),
        \end{equation}
        for some constant~$C>0$. Consider now a sequence $(t_n)_{n \geq 1}$ such that $t_n/\log(n) \to +\infty$ as $n \to +\infty$.
        In particular, there exists $n_\star \in \mathbb{N}$ such that $t_n \inf_{\mathscr{I}_\delta^c} I \geq 2 \log(n)$ for $n \geq n_\star$, which implies 
        \[
      \sum_{n \geq 0}  \proba_x\big( L_{t_n} \notin \mathscr{I}_\delta \big)
    \leq n_\star + C \sum_{n \geq n_\star} \frac{1}{n^2} < +\infty.
    \]
    This shows that~$(L_{t_n})_{n\geq 0}$ converges almost surely to~$\mathscr{I}$
    in the $\tau^\kap$-topology, by the Borel-Cantelli lemma (and by definition of convergence in a topological
    space~\cite[Appendix~B]{dembo2010large}). However, we know by Proposition~\ref{prop:ergodicity} that the
    only possible limit for~$(L_{t_n})_{n\geq 0}$ is~$\mu$, hence~$\mathscr{I}=\{\mu\}$
    and~$(L_{t_n})_{n\geq 0}$ almost surely converges to~$\mu$.
    
    We finally show for completeness that~$(L_t)_{t\geq 0}$ almost surely spends a finite Lebesgue
    time outside~$\mathscr{I}_\delta$.
    For this we introduce the random
        subset of~$\R_+$ of times $t\geq 0$ for which~$L_t$ does not belong
        to~$\mathscr{I}_\delta$,
        namely~$T =\{t\geq 0\,|\, L_t \notin \mathscr{I}_\delta\}$. Since
        \[
        \proba_x\big( L_t \notin \mathscr{I}_\delta \big) = \E_x[ \ind_{\{L_t\notin
          \mathscr{I}_\delta\} }],
        \]
        we have, by Fubini's theorem, for any~$t>0$,
        \[
        \int_0^t \proba_x\big( L_s \notin \mathscr{I}_\delta \big)ds =
        \E_x\left[ \int_0^t \ind_{\{L_s\notin
            \mathscr{I}_\delta\} }ds\right]
        = \E_x \big[ | T\cap[0,t] |\big].
        \]
        By using~\eqref{eq:expoineq} and the dominated convergence theorem, we obtain
        \[
        \E_x\big[|T|\big] = 
        \int_0^{+\infty} \proba_x\big( L_t \notin \mathscr{I}_\delta \big)dt
        <+\infty.
        \]
        As a result, $|T|<+\infty$ almost surely. 
        This means that,
        for any neighborhood~$\mathscr{I}_\delta$ of~$\mathscr{I}$ in the $\tau^\kap$-topology,
        the empirical measure~$(L_t)_{t\geq 0}$
        almost surely spends a finite Lebesgue measure time outside~$\mathscr{I}_\delta$,
        and this concludes the proof.
\end{proof}

\subsection{Proofs of Section~\ref{sec:Donsker}}
\label{sec:proofDonsker}

We start by providing a preliminary technical result in Section~\ref{sec:preliminary_tech_h_f}, which shows that
the eigenvectors~$h_f$ considered in Lemma~\ref{lem:specLf}
belong to the generalized domain~$\mathcal{D}^+(\Lc)$ defined in~\eqref{eq:Dplus}.
We then turn to the proofs of Proposition~\ref{prop:IDV} (see Section~\ref{sec:IDV}) and Corollary~\ref{cor:variational}
(see Section~\ref{sec:proofcor}).

\subsubsection{A preliminary technical result}
\label{sec:preliminary_tech_h_f}

\begin{lemma}
  \label{lem:pcpal_eig_L_f}
  Fix~$f \in B^\infty_\kap(\X)$. The function~$h_f \in B^\infty_W(\X)$ defined in Lemma~\ref{lem:specLf}
  belongs to~$\mathcal{D}^+(\Lc)$ and satisfies 
  \begin{equation}
     \label{eq:eigenproblem_ratio}
    -\frac{\Lc h_f}{h_f} = f - \lambda(f)\in\Binfty_\kap(\X).
  \end{equation}
\end{lemma}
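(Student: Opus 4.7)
The plan is as follows. The positivity and continuity of $h_f$ are already granted by Lemma~\ref{lem:specLf}, so once I establish $\Lc h_f = (\lambda(f) - f) h_f$ in the extended sense of~\eqref{eq:Dextended}, the relation $-\Lc h_f / h_f = f - \lambda(f)$ automatically belongs to $\Binfty_\kap(\X)$ (since $f \in \Binfty_\kap$ and constants are dominated by $\kap \geq 1$), which together with $h_f > 0$ yields $h_f \in \mathcal{D}^+(\Lc)$. The core of the work is therefore to prove membership in the extended domain $\mathcal{D}(\Lc)$ with the stated image $\phi := (\lambda(f) - f) h_f$.

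First I would verify the integrability requirement~\eqref{eq:integrability} for $\phi$. Since $h_f \in \Binfty_W(\X)$ and $f \in \Binfty_\kap(\X)$, we have $\phi \in \Binfty_{\kap W}(\X)$, so the point reduces to showing that $s \mapsto P_s(\kap W)(x)$ is locally integrable. This follows from Assumption~\ref{as:lyapunov}: when $\kap$ is bounded, $\kap W \leq (\sup\kap)\, W$ and a Grönwall argument on the inequality $\Lc W \leq CW$ (consequence of the Lyapunov bound) gives $P_s W \leq \e^{cs} W$; when $\kap$ has compact level sets, the assumption $\Lc(\kap W) \leq c\kap W$ directly yields $P_s(\kap W) \leq \e^{cs}\kap W$. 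Either way $\int_0^t P_s|\phi|\,ds$ is finite pointwise.

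The central identity $P_t h_f = h_f + \int_0^t P_s \phi\, ds$ will be derived from a Duhamel-type formula. Starting from the pointwise algebraic identity
\[
\e^{\int_0^t f(X_s)\,ds} - 1 = \int_0^t f(X_r)\, \e^{\int_r^t f(X_s)\,ds}\,dr,
\]
multiplying by $h_f(X_t)$, taking expectation under $\proba_x$, and using Fubini together with the Markov property gives
\[
(P_t^f h_f)(x) - (P_t h_f)(x) = \int_0^t P_r\bigl(f \cdot P_{t-r}^f h_f\bigr)(x)\,dr.
\]
Plugging in the eigenfunction relation $P_t^f h_f = \e^{\lambda(f) t} h_f$ yields the key identity
\[
P_t h_f = \e^{\lambda(f) t} h_f - \int_0^t \e^{\lambda(f)(t-r)} P_r(f h_f)\,dr,
\]
all manipulations being licit because $|fh_f| \leq \|f\|_{\Binfty_\kap}\|h_f\|_{\Binfty_W}\,\kap W$ and $P_s(\kap W)$ is integrable on $[0,t]$ by the previous step.

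The conclusion follows from a second Fubini computation. Writing
\[
\int_0^t P_s \phi\, ds = \lambda(f)\int_0^t P_s h_f\, ds - \int_0^t P_s(f h_f)\,ds,
\]
I would substitute the key identity (evaluated at time $s$) into the first integral, swap the order of integration, perform the explicit $s$-integration of $\e^{\lambda(f)(s-r)}$, and use the key identity once more at time $t$; the resulting cancellations collapse the right-hand side to $P_t h_f - h_f$ (the degenerate case $\lambda(f) = 0$ is handled identically, or by an obvious limiting argument). This identifies $\phi$ with $\Lc h_f$ in the sense of~\eqref{eq:Dextended} and closes the proof. I expect the main difficulty to be not conceptual but bookkeeping: uniformly justifying the Duhamel derivation and the two Fubini applications by absolute integrability, which is precisely what the weighted bounds on $f$, $h_f$ and the Lyapunov control on $P_s(\kap W)$ deliver.
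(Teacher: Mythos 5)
Your proposal is correct and follows essentially the same route as the paper's proof: both interpolate between the tilted and untilted semigroups via the fundamental-theorem-of-calculus identity for the exponential weight, use the Markov property together with the eigenrelation $P_t^f h_f = \e^{\lambda(f)t}h_f$, and verify the extended-domain integrability through $\phi=(\lambda(f)-f)h_f\in\Binfty_{\kap W}(\X)$ and the Lyapunov condition~\eqref{eq:Lyapunov_kappaW}. The only difference is bookkeeping: the paper absorbs the shift $f-\lambda(f)$ into the exponent from the start, so the identity $P_th_f-h_f=\int_0^t P_s\phi\,ds$ drops out in one pass, whereas you first derive the Duhamel formula for $f$ alone and then recombine by a second Fubini computation (whose absolute-integrability justification, strictly speaking, needs the Feynman--Kac/martingale bound $\E_x\bigl[W(X_t)\,\e^{c\int_0^t\kap(X_s)\,ds}\bigr]\leq C\,W(x)$ from Lemma~\ref{lem:feynmankac}, not merely the integrability of $P_s(\kap W)$, but this is exactly the estimate the paper uses as well).
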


\begin{proof}
  We already know by Lemma~\ref{lem:specLf} that $h_f \in C^0(\X)$ and $h_f > 0$. It suffices therefore to show that $h_f \in\mathcal{D}(\Lc)$ and to obtain the representation~\eqref{eq:eigenproblem_ratio} for~$\Lc h_f$. We combine to this end elements from~\cite[Theorem~4.2.25]{deuschel2001large} and~\cite[Proposition~B13]{Wu00}.

  We start by noting that, since~$h_f$ is an eigenvector
  of the operator~$P_t^f$ with eigenvalue~$\e^{\lambda(f) t}$, it holds
  \begin{equation}
    \label{eq:h_f_invariance_under_Pt}
    h_f(x) = \e^{-\lambda(f) t}\left(P_t^f h_f\right)(x) = \E_x\left( h_f(X_t) \, \e^{\int_0^t [f(X_s)-\lambda(f)]\,ds}\right).
  \end{equation}
  Therefore,
  \begin{equation}
    \label{eq:intsteplambda}
    \begin{aligned}
    \left(P_t h_f\right)(x)-h_f(x) & = \E_x\left[ \left(1-\e^{\int_0^t [f(X_s)-\lambda(f)]\,ds}\right)h_f(X_t)\right]
    \\ & = -\int_0^t \E_x\left[\big(f(X_s)-\lambda(f)\big)\e^{\int_s^t [f(X_\theta)-\lambda(f)]\,d\theta} \, h_f(X_t)\right],
    \end{aligned}
  \end{equation}
  where the last equality comes from Fubini's theorem and
  \[
    \Phi(t)-\Phi(0) = \int_0^t \Phi'(s)\,ds, \qquad \Phi(s) = \e^{\int_s^t [f(X_\theta)-\lambda(f)]\,d\theta}.
  \]
  Note that we can indeed apply Fubini's theorem since there exist $K,c>0$ such that
  \[
    \begin{aligned}
      & \left| \big(f(X_s)-\lambda(f)\big)\e^{\int_s^t [f(X_\theta)-\lambda(f)]\,d\theta} \, h_f(X_t)\right| \\
      & \qquad \qquad \leq K \left(\lambda(f)+\|f\|_{B^\infty_\kap}\right)\left\|h_f\right\|_{B^\infty_W} \kappa(X_s) W(X_t) \e^{c \int_s^t \kappa(X_\theta)\,d\theta},  
    \end{aligned}
  \]
  and (since we are integrating nonnegative functions)
  \[
  \begin{aligned}
    \int_0^t \E_x\left[ \kappa(X_s) W(X_t) \e^{c \int_s^t \kappa(X_\theta)\,d\theta} \right]ds & =
    \E_x\left[W(X_t) \int_0^t\kappa(X_s) \, \e^{c \int_s^t \kappa(X_\theta)\,d\theta} \, ds\right]
    \\ & \leq \frac1c \E_x\left[W(X_t) \, \e^{c \int_0^t \kappa(X_\theta)\,d\theta} \right],
    \end{aligned}
    \]
  where the last expression is finite by manipulations similar to the ones performed in the proof of Lemma~\ref{lem:martingales}. 
  
  We can next use~\eqref{eq:h_f_invariance_under_Pt} at initial time~$s \in [0,t]$ together with a conditioning argument to write
  \[
  \begin{aligned}
    \E_x\left[\big(f(X_s)-\lambda(f)\big)\e^{\int_s^t [f(X_\theta)-\lambda(f)]\,d\theta} \, h_f(X_t)\right]
    & = \E_x\left[\big(f(X_s)-\lambda(f)\big) \big(P_{t-s}^{f - \lambda(f)}h_f\big)(X_s)\right]
    \\ & = \E_x\left[\big(f(X_s)-\lambda(f)\big) h_f(X_s)\right].
\end{aligned}
    \]
  This finally shows that~\eqref{eq:intsteplambda} becomes
  \[
    P_t h_f-h_f = \int_0^t P_s\big[(\lambda(f) - f) h_f \big]ds.
  \]
  Since $(\lambda(f)-f)h_f$ is in $B^\infty_{\kap W}(\X)$ (as the product of functions in~$B^\infty_W(\X)$
  and~$B^\infty_\kap(\X)$) and $(P_t)_{t\geq 0}$ is a semigroup of bounded operators on~$\Binfty_{\kap W}(\X)$
  by~\eqref{eq:Lyapunov_kappaW}, it holds
  \[
  \int_0^t P_s |(\lambda(f)-f)h_f|\,ds <+\infty,
  \]
  so that~\eqref{eq:integrability} is satisfied.
  As a result, $h_f \in \mathcal{D}(\Lc)$ and $\Lc h_f = (\lambda(f) - f)h_f$ in
  the weak sense defined by~\eqref{eq:Dextended}.
\end{proof}

\begin{remark}
\label{rem:domain}
  It is actually possible to make more general statements about the domains of the generators
  of~$(P_t^f)_{t\geq 0}$ for $f\in\Binfty_{\kap}(\X)$, similarly to~\cite{Wu00,wu2001large}.
  For this, one considers the (closed) subset of functions~$\varphi\in\Binfty_W(\X)$
  for which $P_t^f\varphi \to \varphi$ in~$\Binfty_W(\X)$ when $t\to 0$,
  see~\cite[Exercice~1.16]{revuz2013continuous}. We can then define a generator~$\Lc_f$
  with domain~$D(\Lc_f)$
  for this semigroup. By manipulations similar to those of Lemma~\ref{lem:pcpal_eig_L_f},
  we can show that~$D(\Lc_f)\subset\mathcal{D}(\Lc)$ when
  we define~$\mathcal{D}(\Lc)$ as in~\eqref{eq:Dextended}. In this case we obtain the
  representation $\Lc_f = \Lc - f$ which could be expected. This procedure allows to define
  a common domain for the operators~$\Lc_f$ with $f\in\Binfty_{\kap}(\X)$.

  Here we bypass the approach sketched above because, for the proof of Proposition~\ref{prop:IDV} given
  below, we can restrict our attention to the eigenvectors~$h_f$ for~$f\in\Binfty_{\kap}(\X)$.
  In this case, it is clear that $P_t^f h_f \to h_f$ in $\Binfty_W(\X)$ when $t\to 0$,
  and we have the simple representation formula $\Lc h_f = (\lambda(f) - f) h_f$, which can be seen as
  a reformulation of the eigenvalue equation $(\Lc + f )h_f = \lambda(f)h_f$.
\end{remark}

\subsubsection{Proof of Proposition~\ref{prop:IDV}}
\label{sec:IDV}
For the proof, which is partly inspired by~\cite[Lemma~4.1.36]{deuschel2001large}, we denote
by~$\IF$ the rate function given by the Fenchel transform in~\eqref{eq:Ifenchel} and~$\IV$ for the
Varadhan functional on the right hand side of~\eqref{eq:IDV}. We repeatedly use the results
of Lemmas~\ref{lem:specLf} and~\ref{lem:pcpal_eig_L_f}.

We first show that~$\IV(\nu)=+\infty$ if $\nu$ is not absolutely continuous with respect
to~$\mu$ or does not belong to~$\mathcal{P}_\kap(\X)$. Assume first that~$\nu\ll\mu$ does not hold:
there exists a set $A\subset\X$ such that $\nu(A) >0$ and $\mu(A) = 0$.
For any $a\in\R$ we introduce~$f_a =a\ind_A$ and denote by~$h_a$ the eigenvector associated
with the principal eigenvalue~$\mathrm{e}^{t \lambda(f_a)}$ of~$P_t^{f_a}$ for some~$t>0$.
Recall that~$h_a \in \mathcal{D}^+(\Lc)$ by
Lemma~\ref{lem:pcpal_eig_L_f}. As shown in the proof of Theorem~\ref{theo:LDP}, it
holds~$\lambda(f_a)=0$, so that~\eqref{eq:eigenproblem_ratio} can be rewritten as 
\[
- \frac{\Lc h_a}{h_a} = a \ind_A.
\]
Therefore, 
\[
\IV(\nu)\geq \intX- \frac{\Lc h_a}{h_a}\,d\nu = a \nu(A) >0.
\]
By letting~$a\to +\infty$, we conclude that $\IV(\nu)=+\infty$ when~$\nu$ is not
absolutely continuous with respect to~$\mu$.
Next, if $\nu\notin\mathcal{P}_\kap(\X)$, since $\kap\geq 1$ it holds $\nu(\kap)=+\infty$.
We may then choose $f=\kap\in\Binfty_\kap(\X)$. By Lemma~\ref{lem:pcpal_eig_L_f}, the principal
eigenvector~$h_\kap$ belongs to~$\mathcal{D}^+(\Lc)$ with $\lambda(\kap)<+\infty$,
so we have
\[
\IV(\nu)\geq \intX- \frac{\Lc h_\kap}{h_\kap}\,d\nu = \intX \kap\,d\nu - \lambda(\kap) = +\infty,
\]
\textit{i.e.} $\IV(\nu)=+\infty$ if $\nu\notin\mathcal{P}_\kap(\X)$. This shows that
$\IF(\nu) = \IV(\nu)$ when $\nu$ is not absolutely continuous with respect to~$\mu$
or $\nu\notin\mathcal{P}_\kap(\X)$. We next show that $\IF = \IV$ when~$\nu\ll\mu$
and $\nu\in\mathcal{P}_\kap(\X)$, which we assume until the end of the proof.

Let us first show that $\IF\geq \IV$. For this, we consider $u\in\mathcal{D}^+(\Lc)$ and introduce
\[
f_u = - \frac{\Lc u}{u}.
\]
Because of the definition~\eqref{eq:Dplus} of~$\mathcal{D}^+(\Lc)$, we know that
$f_u\in\Binfty_\kap(\X)$. We can then write, since $\nu\in\mathcal{P}_\kap(\X)$,
\begin{equation}
  \label{eq:IFpart}
  \IF(\nu)\geq \nu(f_u) - \lambda(f_u).
\end{equation}
We now show that $\lambda(f_u)\leq 0$. By computations similar to the ones in the proof
of Lemma~\ref{lem:martingales}, and using the continuity of~$u\in\mathcal{D}^+(\Lc)$
(see also~\cite[Corollary~2.2]{wu2001large}), we obtain by the local martingale property that
\begin{equation}
  \label{eq:Ptfu_bound}
0 \leq P_t^{f_u} u \leq u.
\end{equation}
Therefore, recalling the definition~\eqref{eq:Q_h} of the $h$-transformed evolution operator
with a time~$t>0$ fixed (with $r(f_u)=\lambda(f_u)$ in view of Lemma~\ref{lem:specLf}),
and denoting by~$h_u>0$ the eigenvector associated with~$f_u$ in
Lemma~\ref{lem:specLf},~\eqref{eq:Ptfu_bound} becomes
\[
\e^{-nt \lambda(f_u)} \frac{u}{h_u} \geq Q_{h_u}^n \left(\frac{u}{h_u}\right) \xrightarrow[n\to+\infty]{} \int_\X \frac{u}{h_u} \, d\mu_{h_u},
\]
where the limit $n \to +\infty$ follows from~\eqref{eq:hergodic} (noting that $u/h_u \in B^\infty_{W h_u^{-1}}(\X)$). The latter limit is positive since $u/h_u$ is continuous and positive, which implies that $\lambda(f_u) \leq 0$. Therefore,~\eqref{eq:IFpart} leads to
\[
\IF(\nu)\geq \nu(f_u) - \lambda(f_u) \geq \nu(f_u) = \int_X - \frac{\Lc u}{u}\, d\nu.
\]
Since $u\in\mathcal{D}^+(\Lc)$ is arbitrary, taking the supremum shows
that $\IF(\nu)\geq \IV(\nu)$ for any
$\nu\in \mathcal{P}_\kap(\X)$ with $\nu\ll\mu$.

We finally turn to the inequality $\IF\leq \IV$. Consider for any arbitrary $f\in\Binfty_\kap(\X)$
the eigenvector $h_f\in\Binfty_W(\X)$ defined in Lemma~\ref{lem:specLf}. By
Lemma~\ref{lem:pcpal_eig_L_f}, this eigenvector belongs
to~$\mathcal{D}^+(\Lc)$ and satisfies $\Lc h_f = (\lambda(f) - f)h_f$. Thus, since $\nu\in\mathcal{P}_\kap(\X)$,
we have
\[
\IV(\nu) \geq \intX - \frac{\Lc h_f}{h_f}\,d\nu= \nu(f) - \lambda(f).
\]
Given that, in the above equation,~$f$ is an arbitrary function belonging
to $\Binfty_\kap(\X)$, taking the supremum leads to
\[
\IV(\nu) \geq \underset{f\in\Binfty_\kap}{\sup}\ \big\{\nu(f) - \lambda(f)\big\}.
\]
This finally shows that $\IF(\nu) = \IV(\nu)$ for all~$\nu\in\mathcal{P}_\kap(\X)$ with $\nu\ll\mu$
and concludes the proof.


\subsubsection{Proof of Corollary~\ref{cor:variational}}
\label{sec:proofcor}
Since~$I$ is the Fenchel transform of~$\lambda$, the result follows if we can show
that the application~$\lambda$ defined on~$\Binfty_\kap(\X)$ is stable by bi-Fenchel conjugacy.
The convexity and finiteness of~$\lambda$ show that a (necessary and)
sufficient condition for~$\lambda$ to be bi-Fenchel stable is for the functional
$f\mapsto \lambda(f)$ to be
lower-semicontinuous (see~\cite[Theorem~2.22]{barbu2012convexity}). We show below
that it is actually continuous: for any sequence~$(f_n)_{n\geq 0}$ in~$\Binfty_\kap(\X)$ such
that $\|f_n - f\|_{\Binfty_\kap}\to 0$ for some $f\in\Binfty_\kap(\X)$, it holds
$\lambda(f_n)\to\lambda(f)$ as $n\to+\infty$. We shall use for this a stability result
from~\cite{chatelin1981spectral}.

Consider a sequence~$(f_n)_{n\geq 0}$ converging to~$f$ in~$\Binfty_\kap(\X)$. Using
Lemma~\ref{lem:diffbound},  for any $\varphi\in\Binfty_W(\X)$, $t>0$, $x\in\X$ and $n\in\N$,
it holds (using again the inequality $a\leq \e^a$ for $a\geq 0$)
\[
  \begin{aligned}
    &   \left| \big(P_t^f \varphi\big)(x) -
      \big(P_t^{f_n} \varphi\big)(x)\right| \\
    & \qquad \qquad \leq \|\varphi\|_{\Binfty_W}
  \E_x\left[  W(X_t)\left(\int_0^t |f(X_s) - f_n(X_s)|\,ds\right)
    \e^{(\| f\|_{\Binfty_\kap}+\| f_n\|_{\Binfty_\kap})\int_0^t\kap(X_s)\,ds}\right]
  \\ &  \qquad \qquad \leq \|\varphi\|_{\Binfty_W} \|f - f_n\|_{\Binfty_\kap}
  \E_x\left[  W(X_t)
    \e^{2(\| f\|_{\Binfty_\kap}+\| f_n\|_{\Binfty_\kap})\int_0^t\kap(X_s)\,ds}\right]
  \\ &  \qquad \qquad \leq C \|\varphi\|_{\Binfty_W} \|f - f_n\|_{\Binfty_\kap}
  \E_x\left[  M_t\right]
  \\ &  \qquad \qquad \leq C \|\varphi\|_{\Binfty_W} \|f - f_n\|_{\Binfty_\kap}W(x),
\end{aligned}
\]
for some constant $C>0$ depending on~$t>0$, $\|f\|_{\Binfty_\kap}$ and $\sup_{n \geq 0} \|f_n\|_{\Binfty_\kap}$. We used
Lemma~\ref{lem:martingales}
and the supermartingale property of~$M_t$ to obtain the last line. This leads to
\begin{equation}
  \label{eq:normconvop}
\big\| P_t^f - P_t^{f_n}\big\|_{\mathcal{B}(\Binfty_W)}\leq C \|f - f_n\|_{\Binfty_\kap}
\xrightarrow[n\to +\infty]{}0.
\end{equation}
We know by Lemma~\ref{lem:specLf} that~$\lambda(f)$ and~$\lambda(f_n)$ are associated with
the isolated largest eigenvalue of the operators~$P_t^f$ and~$P_t^{f_n}$ respectively.
Therefore,~\eqref{eq:normconvop} shows that the
approximation is strongly stable (we refer to~\cite{chatelin1981spectral}, in particular
the definitions in Section~2.2 and Proposition~2.11),
so~\cite[Proposition~2.2]{chatelin1981spectral}
ensures that $\lambda(f_n)\to \lambda(f)$ as $n\to +\infty$. This shows that the function
$\lambda:\Binfty_\kap(\X)\to\R$ is continuous and concludes the proof.


\subsubsection{Proof of Theorem~\ref{theo:InormH}}
\label{sec:Idecomp}
The proof, inspired by~\cite{bodineau2008large}, relies on two ideas:
performing a Witten transform inside the variational representation~\eqref{eq:IDV}
and separating the symmetric and antisymmetric parts of the generator~$\Lc$.
We write~$d\nu=\rho\,d\mu=\e^v\,d\mu$ and assume first that $v\in\Cinfc(\X)$
instead of~$\H^1(\nu)$.
Starting from~\eqref{eq:IDV}, we consider a function~$u$ of the form
\begin{equation}
  \label{eq:uwitten}
u = \e^{\frac{\psi}{2}}\sqrt{\rho},\quad \psi \in\Cinfc(\X).
\end{equation}
We call this choice ``variational Witten transform'' for its similarity with the standard
Witten transform~\cite{witten1982supersymmetry,helffer2006semi,lelievre2016partial} and its
use in the variational formula~\eqref{eq:IDV} satisfied by~$I$.
Since $u= \e^{\frac{\psi}{2} + \frac{v}{2}}$ with $v,\psi\in\Cinfc(\X)$ it is clear
that~$u\in\mathcal{D}^+(\Lc)$. This follows by noting that,
using the shorthand notation~$w=\psi/2 +v/2\in\Cinfc(\X)$, we have
\[
-\frac{\Lc u}{u} =- \e^{-w}\Lc \e^w = -\Lc w -\frac{1}{2} |\sigma^T\nabla w|^2\in\Cinfc(\X)
\subset \Binfty_\kap(\X).
\]
Moreover, it holds $u=\e^w>0$ and~$u$ is constant outside a compact set, so
$u\in\Binfty_W(\X)$ and it holds $u\in \mathcal{D}^+(\Lc)$.

We now rewrite the expression in~\eqref{eq:IDV} for~$u$ given by~\eqref{eq:uwitten},
using again the notation~$w=\psi/2 +v/2$:
\[
-\intX\frac{\Lc u}{u}\,d\nu = -\intX \Lc w \,d\nu
-\frac{1}{2}\intX |\sigma^T\nabla w|^2\,d\nu.
\]
Recalling that $S=\sigma\sigma^T/2$ and expanding $w=\psi/2 +v/2$, we obtain
\begin{equation}
  \label{eq:Luratio}
  \begin{aligned}
    -\intX\frac{\Lc u}{u}\,d\nu & = -\frac{1}{2}\intX \Lc \psi \,d\nu
    -\frac{1}{2}\intX \Lc v \,d\nu \\
    & \ \ \ - \frac{1}{4}\intX \nabla \psi \cdot S \nabla\psi \,d\nu
    - \frac{1}{2}\intX \nabla v \cdot S \nabla\psi \,d\nu
    - \frac{1}{4}\intX \nabla v \cdot S \nabla v \,d\nu.
  \end{aligned}
\end{equation}
We now decompose~$\Lc$ into symmetric and antisymmetric parts. First, it holds
\begin{equation}
  \label{eq:intsymm}
  \begin{aligned}
    -\frac{1}{2}\intX \Lc \psi \,d\nu & = -\frac{1}{2}\intX (\LS \psi)\,\e^v \,d\mu
    -\frac{1}{2}\intX (\LA \psi) \,d\nu \\
    & = \frac{1}{2}\intX \nabla \psi\cdot S\nabla v \,d\nu
    -\frac{1}{2}\intX (\LA \psi) \,d\nu.
  \end{aligned}
\end{equation}
On the other hand, using that~$\LA$ is a first order differential operator
satisfying $\LA^*\ind = -\LA \ind = 0$, we obtain
\[
\intX (\LA v)\,\e^v \,d\mu = \intX (\LA \e^v) \,d\mu = \intX (\LA^*\ind)\, \e^v \,d\mu= 0.
\]
As a result
\begin{equation}
  \label{eq:intantisym}
-\frac{1}{2}\intX \Lc v \,d\nu  = -\frac{1}{2}\intX (\LS v)\,\e^v \,d\mu 
-\frac{1}{2}\intX (\LA v)\,\e^v \,d\mu
= \frac{1}{2}\intX \nabla v\cdot S \nabla v \,d\nu.
\end{equation}
By plugging~\eqref{eq:intsymm}-\eqref{eq:intantisym} into~\eqref{eq:Luratio},
we obtain
\begin{equation}
  \label{eq:Ludecomp}
-\intX\frac{\Lc u}{u}\,d\nu = \frac{1}{4}\intX \nabla v \cdot S \nabla v\,d\nu
-\frac{1}{2}\intX (\LA \psi) \,d\nu - \frac{1}{4}\intX \nabla \psi \cdot S \nabla \psi\,d\nu.
\end{equation}
The first term in the above equation reads (recalling that $\rho=\e^v$)
\[
\frac{1}{4}\intX \nabla v \cdot S \nabla v\,d\nu = \intX \nabla(\sqrt{\rho})\cdot
S \nabla(\sqrt{\rho})\,d\mu.
\]
By density of~$\Cinfc(\X)$ in~$\H^1(\mu)$, the above expression is valid for any~$\rho$
such that~$\sqrt{\rho}\in\H^1(\mu)$. The above computation shows that this condition is equivalent
to assuming that $v\in\H^1(\nu)$, and
\[
\frac{1}{4}\intX \nabla v \cdot S \nabla v\,d\nu = \frac{1}{4}|v|_{\H^1(\nu)}^2,
\]
which does not involve the function~$\psi\in\Cinfc(\X)$.
Moreover, since~$\LA$ is a first order differential operator, antisymmetric on~$L^2(\mu)$, it holds
\[
\intX (\LA \psi) \,d\nu = -\intX (\LA \e^v) \psi\,d\mu = -\intX (\LA v) \psi\,d\nu.
\]
As a result,~\eqref{eq:Ludecomp} rewrites
\begin{equation}
  \label{eq:Ludecomp2}
-\intX\frac{\Lc u}{u}\,d\nu = \frac{1}{4}|v|_{\H^1(\nu)}^2
+\frac{1}{2}\intX (\LA v) \psi \,d\nu - \frac{1}{4}|\psi|_{\H^1(\nu)}^2,
\end{equation}
and this expression is finite for any~$\psi\in\Cinfc(\X)$.

Our goal is now to take the supremum
over functions~$\psi\in\Cinfc(\X)$ in~\eqref{eq:Ludecomp2}, and prove that this is enough
to obtain the supremum over~$\mathcal{D}^+(\Lc)$. We consider for this
the terms depending on~$\psi$
in~\eqref{eq:Ludecomp2} and, using the duality between~$\H^1(\nu)$
and~$\H^{-1}(\nu)$ (see~\cite[Section~2, Claim~F]{komorowski2012fluctuations}) we obtain
\begin{equation}
  \label{eq:ineqH1}
\begin{aligned}
\frac{1}{2}\intX (\LA v)\psi \,d\nu - \frac{1}{4}\intX \nabla \psi \cdot S \nabla \psi\,d\nu
& \leq \frac{1}{2}|\LA v|_{\H^{-1}(\nu)}|\psi|_{\H^1(\nu)} - \frac{1}{4}|\psi|_{\H^1(\nu)}^2
\\ &\leq \frac{1}{4\varepsilon}|\LA v|_{\H^{-1}(\nu)}^2
- \frac{1}{4}(1-\varepsilon)|\psi|_{\H^1(\nu)}^2,
\end{aligned}
\end{equation}
where we used Young's inequality with $\varepsilon<1$ to obtain the second line.
Since~$\LA v\in \H^{-1}(\nu)$, the supremum over the
functions~$\psi\in\Cinfc(\X)$ takes the value~$-\infty$
when~$\psi\notin\H^1(\nu)$. Therefore, by density of $\Cinfc(\X)$ in~$\H^1(\nu)$, the supremum
over the functions of the form~\eqref{eq:uwitten} for~$\psi\in\Cinfc(\X)$ recovers the supremum
over~$\mathcal{D}^+(\Lc)$ and it holds
\begin{equation}
  \label{eq:Ifinaldense}
I(\nu) = \frac{1}{4}|v|_{\H^1(\nu)}^2 +\frac{1}{4}|\LA v|_{\H^{-1}(\nu)}^2,
\end{equation}
by definition of the $\H^{-1}(\nu)$-norm in Section~\ref{sec:setting}, which concludes the
proof.

\begin{remark}
  \label{rem:extendrho}
  We have proved our result for measures of the form $d\nu=\e^v \,d\mu$. Considering
  more general measures~$\nu\ll\mu$ is made difficult because the Radon--Nikodym
  derivative $\rho = d\nu/d\mu$ may vanish on some region of~$\X$, hence the definition
  of~$\LA(\log \rho)$ is not clear. Given~\eqref{eq:ineqH1}, we see
  that we can give a sense to our computations provided~$\LA(\log \rho)$ defines a linear
  form on~$\H^1(\nu)$, namely: there exists~$C>0$ such that
  \[
  \forall\,\psi \in\H^1(\nu),\quad
  \left|\intX  \psi \LA(\log\rho) \,d\nu\right| \leq C \| \psi \|_{\H_1(\nu)}.
  \]
  We find it however clearer to work directly with exponential perturbations of
  the invariant measure~$\mu$.
\end{remark}


\subsubsection{Proof of Corollary~\ref{cor:Iantisym}}
\label{sec:Iantisym}
The proof follows from the variational formulation of Theorem~\ref{theo:InormH}. Indeed, let us
rewrite~\eqref{eq:Iantisym} as
\begin{equation}
  \label{eq:intpsi}
\IA(\nu) = - \frac{1}{2} \inf_{\psi\in \H^1(\nu)}\ \I_\nu(\psi),
\end{equation}
where~$\nu$ is fixed and satisfies the assumptions of the theorem, and
\[
\I_\nu(\psi) =  \frac{1}{2}\intX \mathscr{C}( \psi,\psi)\, d\nu
 -\intX\psi (\LA v) \, d\nu. 
 \]
By~\cite[Section~2, Claim~F]{komorowski2012fluctuations}, we can identify
$\H^{-1}(\nu)$ with the dual of~$\H^1(\nu)$, so that~$\I_\nu$ reads
\[
\forall\,\psi\in \H^1(\nu),\quad \I_\nu(\psi) =\frac{1}{2} |\psi|_{\H^1(\nu)}^2 -
\langle \LA v,\psi\rangle_{\H^{-1}(\nu),\H^1(\nu)}.
\]
Denoting by $\widetilde{\nabla}$ the adjoint of the gradient
operator in~$L^2(\nu)$, standard results of calculus of variations
show that the minimum in~\eqref{eq:intpsi} is attained at a unique $\psi_v\in\H^1(\nu)$ solution to
\begin{equation}
  \label{eq:intPDE}
\widetilde{\nabla}( S \nabla \psi_v) = \LA v.
\end{equation}
Inserting~$\psi_v$ solution to~\eqref{eq:intPDE} in~\eqref{eq:intpsi} leads to
\begin{equation}
  \label{eq:intIA}
\IA(\nu) =\frac{1}{4} \intX \mathscr{C}(\psi_v,\psi_v)\,d\nu,
\end{equation}
which concludes the proof.

\appendix

\section{Tools for large deviations principles}
\label{sec:tools}
In this section, we remind some large deviations concepts (using the abuse of
notation discussed at the beginning of Section~\ref{sec:proofs} for denoting expectations and probabilities).
For a Polish space~$\Y$, we denote by~$\Y'$ its topological dual (the set of continuous
linear functionals over~$\Y$). 
We first recall the definition of an exponentially tight family of measures.
A family of measures~$(\pi_t)_{t\geq 0}$ over a Polish space~$\Y$ is called exponentially tight
if for any $N<+\infty$, there exists a (pre)compact set~$\Gamma_N\subset\Y$ such that
\[
\underset{t \to +\infty}{\limsupb}\, \frac{1}{t}\, \log \pi_t\big( \Gamma_N^c\big) < - N.
\]
In words, exponential tightness means that the measures~$(\pi_t)_{t\geq 0}$ concentrate
exponentially fast over compact sets. This property is used in large deviations to turn an upper bound
over compact sets into an upper bound over all closed sets.

We now define the cumulant function.
Consider a family of measures~$(\pi_t)_{t\geq 0}$ over a Polish space~$\Y$.
The logarithmic moment generating function is defined as in~\cite[Section~4.5]{dembo2010large}:
for any $t\geq 0$, $f\in\Y'$ and a random variable~$Z_t$ distributed according to~$\pi_t$,
\begin{equation}
  \label{eq:apCGF}
  \Lambda_t(f) = \log \E\left[ \e^{ \langle f , Z_t\rangle_{\Y',\Y}}\right]
  = \log \int_{\Y} \e^{\langle f , y\rangle_{\Y',\Y} } \pi_t(dy).
\end{equation}
The scaled cumulant generating function is defined by
\begin{equation}
  \label{eq:apSCGF}
  \bar{\Lambda}_t(f) = \frac{1}{t}\Lambda_t(tf).
\end{equation}

Let us relate this quantity with the objects introduced in Section~\ref{sec:LDP}. In our
situation, we consider fluctuations of the empirical measure~$L_t\in\mathcal{M}(\X)$
(where~$\mathcal{M}(\X)$ is the space of measures with finite mass),
so~$\Y=\mathcal{M}(\X)$ and for $\Gamma\in \mathcal{M}(\X)$,
\[
\pi_t (\Gamma) = \proba_x\left( L_t \in \Gamma\right).
\]
On the other hand,~$f$ belongs to a space of functions,
typically~$\Y'=\mathcal{M}(\X)' =\Binfty(\X)$ when the $\tau$-topology is considered.
In practice we may restrict ourselves to probability
measures because the rate function is infinite otherwise. We see that considering
$L_t\in \mathcal{P}_\kap(\X)$ leads to choosing $f\in\Binfty_\kap(\X)$.
In any case the duality relation~\eqref{eq:apCGF} reads in this case
\[
\Lambda_t(f) = \log \int_{\PX} \e^{\langle f , L_t\rangle_{\Y',\Y}}\proba_x\left( L_t \in dy\right)
=\log \E_x\left[ \e^{L_t(f)}\right]=\log \E_x\left[ \e^{\frac{1}{t}\int_0^t f(X_s)\,ds}\right],
\]
so that $\bar{\Lambda}_t(f)$ coincides with the argument of the limit in~\eqref{eq:SCGF}. With
these preliminaries, we are
in position to state the key theorem for the results in this work, which goes back
to~\cite{gartner1977large,ellis2007entropy}
and is presented for instance in~\cite[Corollary~4.6.14]{dembo2010large}.
We recall that a rate function is said to be \emph{good} if its level sets are compact
for the considered topology.

\begin{theorem}[Projective limit - Gärtner--Ellis]
  \label{theo:GE}
  Let~$(\pi_t)_{t\geq 0}$ be an exponentially tight family of probability measures on
  a Polish space~$\Y$. Assume that
  \[
  \Lambda(\cdot) = \lim_{t\to+\infty} \bar{\Lambda}_t(\cdot) 
  \]
  is finite valued over~$\Y'$ and Gateau-differentiable. Then~$(\pi_t)_{t\geq 0}$ satisfies a
  large deviations principle
  over~$\Y$ with good rate function~$\Lambda^*$, the Legendre--Fenchel transform of~$\Lambda$.
\end{theorem}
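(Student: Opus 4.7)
The plan is to prove the projective limit theorem by reducing to finite-dimensional marginals and then gluing. First, I would observe that $\Lambda$ is automatically convex (as a pointwise limit of the convex functions $\bar\Lambda_t$) and that its Legendre--Fenchel transform $\Lambda^\ast$ is therefore convex and lower semicontinuous on $\Y$. For any finite family $f_1,\dots,f_n\in\Y'$, the push-forward of $\pi_t$ under the map $y\mapsto (\langle f_1,y\rangle,\dots,\langle f_n,y\rangle)$ is a probability measure on $\R^n$ whose scaled log-moment generating function at $\theta\in\R^n$ converges to $\Lambda\bigl(\sum_i\theta_i f_i\bigr)$, which is finite and Gateau-differentiable (hence differentiable in $\theta$, since we are in finite dimensions). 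The classical finite-dimensional Gärtner--Ellis theorem then provides a LDP on $\R^n$ with good rate function equal to the Fenchel transform of $\theta\mapsto \Lambda(\sum \theta_i f_i)$.

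To lift these finite-dimensional LDPs to~$\Y$, I would use the Dawson--Gärtner theorem: $\Y$ can be viewed as the projective limit of its finite-dimensional weak quotients through the linear maps $y\mapsto (\langle f_1,y\rangle,\dots,\langle f_n,y\rangle)$, and a LDP at each projection with a consistent family of rate functions induces a LDP on the projective limit with rate function obtained by taking the supremum over all finite subfamilies. A direct computation shows that this sup coincides with $\Lambda^\ast(y)=\sup_{f\in\Y'}\{\langle f,y\rangle-\Lambda(f)\}$, giving the lower bound on open sets and the upper bound on compact sets with rate $\Lambda^\ast$.

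To pass from compact sets to closed sets, I would invoke the exponential tightness hypothesis: for any $\alpha<\infty$, pick a precompact $\Gamma_\alpha\subset\Y$ with $\limsup t^{-1}\log\pi_t(\Gamma_\alpha^c)<-\alpha$, so that a closed set $F$ can be replaced by the compact $F\cap\overline{\Gamma_\alpha}$ up to an error of order $e^{-\alpha t}$; letting $\alpha\to\infty$ yields the full upper bound $\limsup t^{-1}\log\pi_t(F)\le -\inf_F\Lambda^\ast$. Exponential tightness also forces the level sets $\{\Lambda^\ast\le\alpha\}$ to be compact, so $\Lambda^\ast$ is a good rate function.

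The step I expect to be the genuine work is the lower bound, i.e.\ showing that for each open $G\ni y_0$ with $\Lambda^\ast(y_0)<\infty$ one has $\liminf t^{-1}\log\pi_t(G)\ge -\Lambda^\ast(y_0)$; this is where Gateau-differentiability is essential. At a Gateau-differentiable exposed point, one can pick $f\in\Y'$ with $\Lambda^\ast(y_0)=\langle f,y_0\rangle-\Lambda(f)$ and perform an exponential change of measure $d\tilde\pi_t^f \propto e^{t\langle f,y\rangle}\,d\pi_t$; differentiating $\Lambda_t$ in $f$ and using convergence of $\bar\Lambda_t\to\Lambda$ together with convexity shows that $\tilde\pi_t^f$ concentrates at $y_0=\nabla\Lambda(f)$, and rewriting $\pi_t(G)\ge e^{-t\sup_G\langle f,\cdot\rangle+t\bar\Lambda_t(f)}\,\tilde\pi_t^f(G)$ gives the desired estimate. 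Extending this from exposed points to all points of the effective domain is automatic here because Gateau-differentiability on the whole dual ensures that every point of finite $\Lambda^\ast$ is such an exposed point in the sense needed by the projective-limit argument.
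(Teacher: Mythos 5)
You are not deviating from the paper here so much as reconstructing the proof the paper never gives: Theorem~\ref{theo:GE} is recalled in Appendix~\ref{sec:tools} with a pointer to \cite[Corollary~4.6.14]{dembo2010large}, and your outline (finite-dimensional Gärtner--Ellis for the marginals $y\mapsto(\langle f_1,y\rangle,\dots,\langle f_n,y\rangle)$, Dawson--Gärtner to glue, identification of the projective rate with $\Lambda^*$ by taking suprema over finite subfamilies of $\Y'$, exponential tightness for the closed-set upper bound and goodness) is exactly the route taken in that reference. The convexity of $\Lambda$, the differentiability of the finite-dimensional restrictions $\theta\mapsto\Lambda(\sum_i\theta_i f_i)$ (Gateaux plus convex plus finite in finite dimension gives genuine differentiability, hence essential smoothness since the domain is all of $\R^n$), and the computation identifying $\sup_J\Lambda_J^*\circ p_J$ with $\Lambda^*$ are all fine.

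Two steps are glossed in a way worth flagging. First, the projective limit of the finite-dimensional quotients indexed by finite subsets of $\Y'$ is not $\Y$ itself but the algebraic dual of $\Y'$ with its weak-$*$ topology; Dawson--Gärtner therefore yields an LDP on that larger space, and on $\Y$ only for the weak topology $\sigma(\Y,\Y')$. Exponential tightness of $(\pi_t)$ in the \emph{original} topology of $\Y$ is what lets you restrict the LDP to $\Y$ and strengthen the topology back to the given one (inverse contraction principle); it is not used only for the compact-to-closed upgrade and the goodness of $\Lambda^*$, and this transfer is a genuine step of the proof in \cite{dembo2010large}. Second, your closing claim that Gateaux differentiability of $\Lambda$ on all of $\Y'$ makes every $y$ with $\Lambda^*(y)<+\infty$ an exposed point is false: take $\Lambda(\theta)=\sqrt{1+\theta^2}$ on $\R$, which is finite and smooth; then $\Lambda^*(\pm 1)=0<+\infty$ but $\pm 1$ are not exposed points of $\Lambda^*$. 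This does not break your argument, because the exposed-point/change-of-measure analysis is already packaged inside the finite-dimensional Gärtner--Ellis theorem you invoke (there the lower bound at non-exposed points is recovered via Rockafellar's lemma, i.e.\ exposed points with admissible exposing hyperplanes cover the relative interior of the domain of $\Lambda^*$, followed by approximation from that relative interior), but the extension is not ``automatic'' in the sense you assert, and as written that final paragraph is both redundant and incorrect as a statement of convex analysis.
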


\section{Proof of Proposition~\ref{prop:nonlin}}
\label{sec:nonlin}
The proposition is a consequence of the equality
\[
\Ps=-\frac{\Lc W}{W} =  \theta \left( - \Lc V - \frac{\theta}{2} |\sigma^T\nabla V|^2\right).
\]
Since $|\sigma^T\nabla V|$ has compact level sets and $\Ps \sim |\sigma^T \nabla V|^2$
by~\eqref{eq:nonlin},~$\Ps$ has compact level sets. Since~$V$ has compact level sets,
for $\varepsilon < \theta/2$ it holds $\mathscr{W}\ll W$ and $\mathscr{W}^2\leq C_1 W$
for some constant $C_1 >0$. Moreover, outside a compact set, the function
\[
\frac{\Ps}{-\frac{\Lc \mathscr{W}}{\mathscr{W}}} =\frac{\theta}{\varepsilon}
\frac{  (-\Lc V - \frac{\theta}{2}|\sigma\nabla V|^2)  }{ ( -\Lc V
  - \frac{\varepsilon}{2}|\sigma\nabla V|^2) }
\]
is bounded above and below since the numerator and denominator are both equivalent
to~$|\sigma^T \nabla V|^2$, so the second condition in~\eqref{eq:restriction} holds. Finally,
\[
\begin{aligned}
  - 2 \frac{\Lc \mathscr{W}}{\mathscr{W}} = 2 \varepsilon \left( -\Lc V
  - \frac{\varepsilon}{2}|\sigma\nabla V|^2\right)
  & = 2 \frac{\varepsilon}{\theta}\theta \left( -\Lc V  - \frac{\theta}{2}|\sigma\nabla V|^2\right)
  + \varepsilon(\theta - \varepsilon)|\sigma\nabla V|^2
  \\
  & = 2 \frac{\varepsilon}{\theta} \Ps + \varepsilon(\theta - \varepsilon)|\sigma\nabla V|^2.
\end{aligned}
\]
Since $\Psi\sim |\sigma\nabla V|^2$, we may choose $\varepsilon$ small enough so as
to obtain
\[
- 2 \frac{\Lc \mathscr{W}}{\mathscr{W}} \leq \Psi + C_2,
\]
for some constant $C_2 \in \R$. This proves the third item of~\eqref{eq:restriction}.

We finally turn to the proof of~\eqref{eq:Lyapunov_kappaW}. For this we compute
\[
\Lc (\kap W) = \kap \Lc W + W \Lc \kap + (\sigma^T \nabla \kap )\cdot (\sigma^T \nabla W).
\]
Hence, using that $W(x) = \e^{\theta V(x)}$, for any~$\eta>0$ it holds
\[
  \begin{aligned}
    \frac{\Lc (\kap W)}{\kap W}
    & = \frac{\Lc W}{W} + \frac{\Lc \kap}{\kap} +  (\sigma^T \nabla \log W)\cdot(\sigma^T \nabla \log \kap ) \\
    & \leq - \Psi + \frac{\Lc \kap}{\kap} + \frac{\eta}{2} |\sigma^T \nabla \log W|^2+ \frac{1}{2\eta} |\sigma^T \nabla \log \kap|^2 \\
    & = - \Psi + \frac{\eta}{2}\theta^2 |\sigma^T \nabla V|^2 + \frac{\Lc \kap}{\kap}
    + \frac{1}{2\eta} |\sigma^T \nabla \log \kap|^2.
  \end{aligned}
\]
Since $\Psi \sim |\sigma^T \nabla V|^2$ at infinity and~\eqref{eq:condkappa} holds,
this shows that~\eqref{eq:Lyapunov_kappaW} is satisfied when choosing~$\eta>0$ sufficiently small.

\section{Proof of Lemma~\ref{lem:lyapunovlangevin}}
\label{sec:lyapunovlangevin}
The proof relies on manipulations similar to those of~\cite{mattingly2002ergodicity}.
A simple computation shows that
\begin{equation}
-\frac{\Lc_\gamma W}{W}(q,p) = \varepsilon q \cdot \nabla V - \gamma \varepsilon^2|q|^2
+ \gamma\varepsilon (1 - 2\theta) p\cdot q
+ \theta \gamma( 1 - \theta) |p|^2  - \varepsilon |p|^2 -\theta \gamma d.
\end{equation}
For any~$\eta >0$ it holds
\[
p\cdot q \geq -\eta\frac{|q|^2}{2} - \frac{|p|^2}{2\eta}.
\]
As a result, Assumption~\ref{as:langevin} leads to
\[
  \begin{aligned}
  -\frac{\Lc_\gamma W}{W}(q,p) & \geq  |q|^2 \left( c_V\varepsilon - \gamma \varepsilon^2 - \frac{\eta\gamma\varepsilon}{2}
  (1 - 2\theta)\right)
  + |p|^2 \left( \theta\gamma - \theta^2 \gamma - \varepsilon - \frac{\gamma \varepsilon}{2\eta}
    (1 - 2\theta)\right) \\
  & \qquad -\theta \gamma d - \varepsilon C_V.
  \end{aligned}
\]
Since~$\theta > 0$, it holds
\[
-\frac{\Lc_\gamma W}{W}(q,p) \geq a |q|^2 + b |p|^2 - C,
\]
with
\[
  a  = \varepsilon\left(c_V - \frac{\eta\gamma}{2}\right)  - \gamma\varepsilon^2, 
\quad
b  = \theta (1 - \theta) \gamma - \varepsilon - \frac{\gamma \varepsilon}{2\eta},
\quad C = \theta \gamma d + \varepsilon C_V.
  \]
  The claim follows for~$\theta\in(0,1)$ by choosing $\eta<2c_V/\gamma$ and~$\varepsilon>0$
  sufficiently small.





\end{document}